\documentclass{amsart}
\usepackage[utf8]{inputenc}
\usepackage[english]{babel}

\usepackage{xcomment}

\usepackage{amsmath,amssymb,amsthm,amsrefs}
\usepackage{mathtools}

\usepackage{caption,subcaption} 
\usepackage[all,curve]{xy} 

\usepackage{hyperref}
\usepackage{cleveref}
\usepackage{enumitem}

\usepackage{tikz}

\usetikzlibrary{shapes,arrows}
\usetikzlibrary{decorations.markings}
\usetikzlibrary{arrows.meta}
\usetikzlibrary{patterns}

\newtheorem{thm}{Theorem}[section]
\newtheorem*{thm*}{Theorem}
\newtheorem{cor}[thm]{Corollary}
\newtheorem*{cor*}{Corollary}
\newtheorem{prop}[thm]{Proposition}
\newtheorem*{prop*}{Proposition}
\newtheorem{lemma}[thm]{Lemma}
\newtheorem*{lemma*}{Lemma}
\theoremstyle{definition}
\newtheorem{defn}[thm]{Definition}
\newtheorem*{defn*}{Definition}
\newtheorem{stmt}[thm]{Statement}
\newtheorem{openQuestion}{Open Question}[section]
\theoremstyle{remark}
\newtheorem{rem}[thm]{Remark}
\newtheorem{obs}[thm]{Observation}
\numberwithin{equation}{section}

\DeclarePairedDelimiter{\bracket}{[}{]}
\DeclarePairedDelimiter{\braces}{\{}{\}}

\newcommand{\set}[1]{\braces*{#1}}
\DeclarePairedDelimiterX{\setbuild}[2]{\{}{\}}{{#1}\;\delimsize\vert\;{#2}}
\DeclarePairedDelimiterX{\setbuildc}[2]{\{}{\}}{{#1}\vcentcolon{#2}}

\renewcommand{\models}{\vDash}
\newcommand{\nmodels}{\nvDash}
\DeclareSymbolFont{AMSb}{U}{msb}{m}{n}

\DeclareMathSymbol{\N}{\mathbin}{AMSb}{"4E} 
\DeclareMathSymbol{\Z}{\mathbin}{AMSb}{"5A} 
\DeclareMathSymbol{\R}{\mathbin}{AMSb}{"52} 
\DeclareMathSymbol{\Q}{\mathbin}{AMSb}{"51} 
\DeclareMathSymbol{\C}{\mathbin}{AMSb}{"43} 
\makeatletter
\newcommand{\nfs}{\@ifnextchar.{}{.\@}}
\makeatother

\newcommand{\leT}{\le_{\mathrm{T}}}
\newcommand{\geT}{\ge_{\mathrm{T}}}

\newcommand{\NN}{\mathbb{N}}

\newcommand{\Cc}{\mathcal{C}}
\newcommand{\Dc}{\mathcal{D}}
\newcommand{\Ec}{\mathcal{E}}

\newcommand{\Ac}{\mathcal{A}}
\newcommand{\Bc}{\mathcal{B}}

\newcommand{\Psf}{\mathsf{P}}
\newcommand{\Qsf}{\mathsf{Q}}

\renewcommand{\Mc}{\mathcal{M}}

\newcommand{\RCA}{\mathsf{RCA}}
\newcommand{\RCOLOR}{\mathsf{RCOLOR}}
\newcommand{\COLOR}{\mathsf{COLOR}}
\newcommand{\AMT}{\mathsf{AMT}}
\newcommand{\WKL}{\mathsf{WKL}}
\newcommand{\CADS}{\mathsf{CADS}}
\renewcommand{\POS}{\mathsf{POS}}
\newcommand{\RWKL}{\mathsf{RWKL}}
\newcommand{\ADS}{\mathsf{ADS}}
\newcommand{\SADS}{\mathsf{SADS}}
\newcommand{\COH}{\mathsf{COH}}
\newcommand{\DNR}{\mathsf{DNC}}
\newcommand{\PIOOG}{\Pi^0_1\mathsf{G}}

\newcommand{\ISig}{\mathsf{I}\Sigma^0}
\newcommand{\BSig}{\mathsf{B}\Sigma^0}

\newcommand{\dom}{\operatorname{dom}}
\newcommand{\uh}[0]{{\upharpoonright}}

\newcommand{\bstr}{2^{<\NN}}

\def\qt#1{``#1''}%

\AtBeginDocument{%
   \def\MR#1{}
}

\usepackage{xcolor}

\title{The weakness of typicality}

\author[Astor]{Eric P. Astor}
\address{
(formerly) Department of Mathematics\\
University of Connecticut\\
Storrs, CT 06269\\
USA}
\curraddr{447 Broadway, 2nd Floor \#1213\\
New York, NY 10013\\
USA}
\email{eric.astor@gmail.com}
\urladdr{https://ericastor.info}

\author[Bienvenu]{Laurent Bienvenu}
\address{Laboratoire Bordelais de Recherche en Informatique\\
Université de Bordeaux\\
351, Cours de la Lib\'eration\\
33405 Talence Cedex\\
France}
\email{laurent.bienvenu@computability.fr}
\urladdr{https://www.labri.fr/perso/lbienvenu/}

\author[Dzhafarov]{Damir Dzhafarov}
\address{Department of Mathematics\\
University of Connecticut\\
Storrs, CT 06269\\
USA}
\email{damir@uconn.edu}
\urladdr{https://damirdzhafarov.com}

\author[Patey]{Ludovic Patey}
\address{Institut de Math\'ematiques de Jussieu-Paris Rive Gauche\\
Universit\'e Paris Cit\'e – B\^atiment Sophie Germain\\
8 Place Aur\'elie Nemours, 75205 Paris Cedex 13}
\email{ludovic.patey@computability.fr}
\urladdr{https://ludovicpatey.com}

\author[Shafer]{Paul Shafer}
\address{School of Mathematics\\
University of Leeds\\
Leeds\\
LS2 9JT\\
United Kingdom}
\email{p.e.shafer@leeds.ac.uk}
\urladdr{https://peshafer.github.io}

\author[Solomon]{Reed Solomon}
\address{Department of Mathematics\\
University of Connecticut\\
Storrs, CT 06269\\
USA}
\email{david.solomon@uconn.edu}
\urladdr{https://www2.math.uconn.edu/~solomon/}

\author[Westrick]{Linda Westrick}
\address{Department of Mathematics\\
Penn State University\\
University Park, PA 16803\\
USA}
\email{westrick@psu.edu}
\urladdr{https://lindawestrick.com}

\date{\today}

\begin{document}

\begin{abstract}
Many statements studied in reverse mathematics can be seen as mathematical problems, formulated in terms of instances and solutions. We develop a framework of typicality encompassing measure and genericity, and we classify the reverse mathematics zoo in terms of which problems admit typical solutions. It turns out that even very weak problems do not admit typical solutions.
\end{abstract}

\maketitle

\section{Introduction}
In mathematical practice, it is common to define objects as ``typical'' if they avoid all extraordinary circumstances up to a certain degree of complexity. For instance, a set of points is in general linear position if it satisfies no more linear relations than are necessary; such a set might be said to be typical with respect to linear relations.

In Cantor space ($2^{\omega}$), two notions of typicality currently dominate: Martin-L\"of $n$-randomness, or avoidance of all ``effectively null'' classes as defined by $\Sigma^0_n$ Martin-L\"of tests; and weak Cohen $n$-genericity, or avoidance of all meager classes whose complements are countable intersections of classes described by $\Sigma^0_n$ sets of strings.

We know that each sort of typicality forces certain properties for subsets of~$\omega$. For example, no $n$-generic or $n$-random set is computable (or in fact $\Sigma^0_n$, for obvious reasons), but no $1$-generic computes a DNC function; on the other hand, every $1$-random does compute a DNC function. In fact, increasing the level of randomness can significantly increase the power of a typical oracle. The Rainbow Ramsey Theorem for pairs and $2$-bounded colors states that every $2$-bounded coloring of pairs in $\omega$ has a ``rainbow'', an infinite subdomain on which no color appears twice. There are known computable $2$-bounded colorings with no computable rainbow; however, as shown by Csima and Mileti~\cite{CsimaM2009}, every $2$-random oracle computes rainbows for every computable $2$-bounded coloring.

Applying (the relativized versions of) these results in reverse mathematics, we see that axioms guaranteeing the mere existence of sufficiently typical sets can have useful implications:

\begin{thm}
$\mathsf{RCA}_0+\mathsf{1\text{-}RAN}\vdash\mathsf{DNC}$.
\end{thm}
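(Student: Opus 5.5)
We prove the statement inside an arbitrary model of $\RCA_0+\mathsf{1\text{-}RAN}$: fix a set $X$ and find a function that is DNC relative to $X$. The axiom $\mathsf{1\text{-}RAN}$ gives a set $R$ that is Martin-L\"of random relative to $X$. We then formalize the classical fact that every $1$-random computes a DNC function, relativized to $X$. The key constraint is that we may use only $\Sigma^0_1$ induction. We therefore avoid arguments that go through prefix-free complexity and use a direct, primitive-recursive extraction instead.

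\textbf{Construction.} Define $f(e) = R\uh[h(e),h(e+1))$, coded as a number, where $h(e)=e(e+1)/2$, say, so the blocks have length $e+1$. Blocks of any length $\ell(e)$ with $\sum_e 2^{-\ell(e)}<1$ suitably bounded would also do. The function $f$ is $\Delta^0_1$ in $R\oplus X$, so it exists by $\Delta^0_1$ comprehension. We claim it is DNC relative to $X$ from some point on, and then fix finitely many values.

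\textbf{Test.} Define a Martin-L\"of test relative to $X$. For each $c$, let
\[ U_c=\setbuild{Z}{\exists e\ge c\,(\Phi^X_e(e)\downarrow \wedge Z\uh[h(e),h(e+1)) = \Phi^X_e(e))}. \]
This is $\Sigma^0_1$ in $X$ uniformly in $c$. Its measure is at most $\sum_{e\ge c}2^{-(e+1)} = 2^{-c}$. In $\RCA_0$ this bound must be read as the statement that every finite stage-approximation of $U_c$, i.e.\ every finite union of basic cylinders, has measure at most $2^{-c}$. That finite sum is verified by bounded induction, which is available in $\ISig_1$. So $(U_c)$ is an $X$-Martin-L\"of test in the sense of second-order arithmetic. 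Since $R$ is $X$-random, there is a $c$ with $R\notin U_c$. Hence for all $e\ge c$ we have $f(e)\ne \Phi^X_e(e)$ whenever the latter converges.

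\textbf{Patching the finite part.} We still need to handle the finitely many $e<c$. Naively, we set $g(e)=\Phi^X_e(e)+1$ when it converges, but which $e<c$ converge is a $\Sigma^0_1$ question. A bounded $\Sigma^0_1$ statement over $e<c$ has a finite truth set, which exists as a finite set (coded by a number) by $\Sigma^0_1$ bounded comprehension, provable in $\RCA_0$. Hence we can hardcode the finite sequence $\sigma$ with $\sigma(e)=\Phi^X_e(e)+1$ if it converges and $0$ otherwise. Then set $g=\sigma$ below $c$ and $g=f$ above, so $g$ is DNC relative to $X$.

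A cleaner alternative avoids patching altogether by padding indices. By the $s$-$m$-$n$ theorem, for each $e$ and $k$ there is an index $p(e,k)\ge k$ with $\Phi^X_{p(e,k)}(p(e,k))\simeq\Phi^X_e(e)$. Then define $g(e)=f(p(e,c))$, which is DNC everywhere. Either route works.

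\textbf{Main obstacle.} The main obstacle is the formalization of the measure bound and of the definition of $\mathsf{1\text{-}RAN}$ in $\RCA_0$, namely making sure the test is a legitimate test under the paper's formal definition. The point is that the sum bound only requires estimates on finite unions, with no limits, so $\ISig_1$ suffices.
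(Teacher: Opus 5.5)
Your proposal is correct and follows essentially the same route as the paper. The paper gives no separate proof: it simply applies the relativized classical fact that every $1$-random computes a DNC function. Your explicit Martin-L\"of test on blocks, together with the padding trick via $p(e,c)$ that avoids patching finitely many values, is a sound formalization of exactly that fact in $\RCA_0$.
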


\begin{thm}
$\mathsf{RCA}_0+\mathsf{2\text{-}RAN}\vdash\mathsf{RRT}^2_2$.
\end{thm}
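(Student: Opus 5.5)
The plan is to formalize, inside $\RCA_0$, the Csima--Mileti argument that every $2$-random relative to a $2$-bounded coloring computes a rainbow for it. Work in a model $\Mc$ of $\RCA_0+\mathsf{2\text{-}RAN}$ and fix a $2$-bounded coloring $f\colon[\NN]^2\to\NN$ in $\Mc$. By $\mathsf{2\text{-}RAN}$, applied relative to $f$, there is $X$ that is $2$-random relative to $f$, meaning $X$ avoids every $\Sigma^0_2(f)$ Martin-L\"of test, equivalently every Martin-L\"of test relative to $f'$. The goal is to build, $\Delta^0_1$ in $X\oplus f$, an infinite set $R$ such that $f$ is injective on $[R]^2$.

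The construction follows Csima--Mileti. Read $X$ as a sequence of blocks, and use the $n$-th block to choose the next element $r_n$ of $R$ among a large pool of candidates above $r_{n-1}$. Because $f$ is $2$-bounded, for fixed $r_0<\dots<r_{n-1}$ only few candidates $x$ create a repeated color: either $f(r_i,x)=f(r_j,x)$, or $f(r_i,x)=f(r_j,r_k)$. Each earlier color, and each pair $(r_i,x)$, shares its color with at most one other pair. Hence the set of candidates that cause a conflict with the current finite rainbow, counting conflicts against all future elements, has small density. The catch is that \qt{bad} in the limit is only $\Pi^0_1(f)$ or $\Sigma^0_2(f)$: a candidate $x$ is bad if some later $y$ gives $f(r_i,y)=f(x,y)$ or similar. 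The standard fix is to choose $r_n$ so that it avoids bad configurations, and to bound the measure of the set of $X$ for which the construction ever picks a bad element. The block sizes are chosen so that the bad measure at stage $n$ is at most $2^{-n-c}$. The sets $U_c=\{X:\text{the construction driven by }X\text{ fails}\}$ are then $\Sigma^0_2(f)$, uniformly in $c$, with $\mu(U_c)\le 2^{-c}$, so they form a $\Sigma^0_2(f)$ Martin-L\"of test. Since $X$ passes it, the rainbow produced from $X$ (for the appropriate $c$) really is a rainbow. Alternatively, and closer to Csima--Mileti, $R$ is built computably from $X\oplus f$ using only computable data, and the test certifies correctness.

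The main obstacle is doing the measure estimates and test construction inside $\RCA_0$, where only $\ISig_1$ is available. Measures of $\Sigma^0_2$ classes are not directly definable. I would therefore phrase $\mathsf{2\text{-}RAN}$ as passing tests given by uniformly $\Sigma^0_1(f')$ sequences of open sets, presented as $\Sigma^0_2$-enumerated sets of strings, with the measure bound checked on finite stages. The counting bounds at each stage involve only finite combinatorics, provable by $\ISig_1$ or bounded induction. A further subtlety is that $R$ must be infinite: each stage must find a good candidate within its block. I would handle this by fixing block lengths in advance as an explicit computable function, so that $R$'s definition is $\Delta^0_1(X\oplus f)$ and exists by $\Delta^0_1$ comprehension. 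Failure to be a rainbow is then a $\Sigma^0_1$ event in $X$ relative to $f$, and whether it has small measure is what the $\Sigma^0_2$ test captures. Injectivity on $[R]^2$ follows because $X\notin U_c$.
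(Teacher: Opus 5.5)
Your top-level plan is what the paper relies on: it gives no argument beyond citing Csima and Mileti, relativized to $f$, and you propose to formalize that argument in $\RCA_0$. The gap is that the combinatorial core you sketch is wrong, and the design you settle on in the last paragraph cannot work.

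\textbf{What fails.} Three claims in your sketch are false.

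First, it is not true that only few candidates $x$ create a repeated color. A $2$-bounded coloring can satisfy $f(a,x)=f(b,x)$ for all $x>b$. If $r_0=a$ and $r_1=b$, then \emph{every} candidate creates a repeated color, and nothing done at a later stage repairs this.

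Second, your notion of bad (\qt{some later $y$ gives $f(r_i,y)=f(x,y)$}) is the wrong one. Suppose $f(0,y)=f(h(y),y)$, where $1\le h(y)<y$ and $h$ takes each value infinitely often. Then every $x$ is bad relative to $\{0\}$, so no density bound holds. Yet such conflicts are harmless, since one simply never picks those $y$.

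Third, and most seriously, in your last paragraph the pools are fixed in advance and $R$ is $\Delta^0_1(X\oplus f)$ outright, so failure is a $\Sigma^0_1(f)$ event. Then $(U_c)_c$ would be a Martin-L\"of test relative to $f$, and your estimate would make every set Martin-L\"of random relative to $f$ compute an $f$-rainbow. That is false. By $\mathsf{RRT}^2_2\equiv_c 2\text{-}\DNR$ (Figure~\ref{fig:summary-ran}), some computable $f$ has only rainbows that compute a $2$-DNC function. No $\Delta^0_2$ set computes such a function, and there are low Martin-L\"of random sets. So $\mu(U_c)\le 2^{-c}$ must fail for that design. A genuinely $\Sigma^0_2$ failure event is exactly where $2$-randomness is needed, and your sketch never produces one correctly.

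\textbf{The missing idea.} What you need is the right notion of badness together with a pigeonhole bound. For a finite rainbow $F$, let $A_F=\{z>\max F : F\cup\{z\}\text{ is a rainbow}\}$; this set is computable from $F\oplus f$.

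If $A_F$ is infinite, then at most $|F|$ values $x\in A_F$ make $A_{F\cup\{x\}}$ finite. To see this, fix $x$. All but finitely many $z\in A_F\setminus A_{F\cup\{x\}}$ are excluded because $f(x,z)=f(y,z)$ for some $y\in F$. Every other kind of conflict reuses one of the finitely many colors $f(y,y')$ or $f(y,x)$ with $y,y'\in F$. For fixed $z$ and $y$, at most one $x$ satisfies $f(x,z)=f(y,z)$. Hence $|F|+1$ bad values of $x$ would all be excluded by a single large $z\in A_F$, which is impossible.

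The construction should therefore work as follows:
\begin{enumerate}
\item At stage $n$, search for the first $m_n$ elements of $A_{F_n}$ (not a fixed block). Pick one using fresh bits of $X$, where $m_n$ is a power of $2$ and $\sum_n n/m_n\le 2^{-c}$.
\item Let $U_c$ be the set of $X$ for which some stage that is reached has $A_{F_n}$ finite. This set is open and c.e.\ in $f'$, and $\mu(U_c)\le\sum_n n/m_n$.
\item If $X\notin U_c$, every search that starts halts. Since \qt{stage $n$ is reached} is $\Sigma^0_1(X\oplus f)$, $\ISig_1$ shows that all stages are reached. So $R=\bigcup_n F_n$ exists by $\Delta^0_1$ comprehension.
\end{enumerate}
This, not fixing the pools in advance, is how infinitude of $R$ is secured in $\RCA_0$. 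Inside $\RCA_0$, the measure bound is verified on each finite set of strings enumerated into $U_c$. There you replace \qt{$A_F$ finite} by \qt{$A_F\subseteq[0,N]$} for a common large $N$, which reduces the check to finite combinatorics plus bounded $\Pi^0_1$ comprehension.
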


This raises further questions. Is there a useful consequence of $\mathsf{3\text{-}RAN}$? What about principles guaranteeing the existence of sufficiently generic sets? How far can these implications go, and are there any results that cannot be proven using typicality alone? Fundamentally, we ask: is the existence of strongly typical sets a weak principle, or a strong assumption?

As suggested by our title, we find that typicality principles are weak. Specifically, we establish strict upper bounds on the set of results implied by randomness- and genericity-existence principles.

In Section~\ref{sec:framework}, we begin by constructing a broad theorem, applicable to all notions of typicality in a broad framework. This framework is fairly simple, and is satisfied by both standard typicality notions on $2^{\omega}$. Our theorem is designed for implications between problems and more precisely $\Pi^1_2$-sentences, but our analysis in Section~\ref{sec:variants} also offers a variation for more general second-order statements.

In Sections~\ref{sec:RANapplications} and \ref{sec:GENapplications}, we explore applications to randomness- and genericity-existence principles respectively. We successfully classify most standard reverse-mathematical principles as either consequences of randomness, or not provable from any randomness-existence principle. We conduct a similar analysis for genericity. 

A note to the reader: in this paper we refer to subsets of $2^{\omega}$ as \emph{classes}, in order to distinguish them from points in $2^{\omega}$, which we think of as \emph{sets}.

\section{Framework}\label{sec:framework}

This section is devoted to the development of an abstract framework for studying the existence of typical solutions to mathematical problems. In \Cref{sec:smallness-abstract}, we give an axiomatic definition of smallness, encompassing randomness and genericity. In \Cref{sec:problems}, we define the abstract notion of mathematical problem, in terms of instances and solutions, and compare them with respect to computable and $\omega$-reducibilities. In \Cref{sec:main-theorem}, we define what it means for a problem to be frequently solved, and prove our main theorem, stating that being frequently solved is upward-closed under $\omega$-reducibility.
Last, in \Cref{sec:variants}, we refine our notion of frequently solved problem to obtain three properties, and obtain further separations.

\subsection{Smallness and Typicality}\label[section]{sec:smallness-abstract}
Suppose we have a property of classes $\mathcal{C}\subseteq 2^{\omega}$, called \emph{smallness}, with the following properties:
\begin{enumerate}
\item \label{stmt:nontrivial} $\emptyset$ is small, and $2^{\omega}$ is not.
\item \label{stmt:nonempty} If $\mathcal{B}$ is small and $\Ac \subseteq \Bc$, then $\Ac$ is small.
\item \label{stmt:union} If $\mathcal{B}_i$ is small for all $i\in\omega$, then $\bigcup_{i\in\omega}{\mathcal{B}_i}$ is also small.
\item \label{stmt:joinProjection} Smallness is closed under the operation:
\begin{equation*}
\mathcal{B}(\mathcal{S})=\setbuildc{X}{\setbuildc{Z}{X\oplus Z\in\mathcal{S}}\text{ is not small}};
\end{equation*}
that is, if $\mathcal{S}$ is small, so is $\mathcal{B}(\mathcal{S})$.
\end{enumerate}

Properties~(\ref{stmt:nontrivial})-(\ref{stmt:union}) are those of a proper $\sigma$-ideal;
thus, for most purposes we merely add that our notion of smallness satisfies property~(\ref{stmt:joinProjection}).

We can define a class to be small (in the sense of measure) if it has measure~$0$; property~(\ref{stmt:joinProjection}) then holds as a consequence of Fubini's Theorem. In this case, a small class $\mathcal{C}$ is said to be $n$-small if it has a $\Sigma^0_n$-Martin-L\"of test; that is, if there is a sequence of uniformly $\Sigma^0_n$ classes $\set{W_n}$ such that $\mu(W_n)\le 2^{-n}$ for all $n$, with $\mathcal{C}\subseteq\bigcap_{n<\omega}{W_n}$. More generally, $\mathcal{C}$ is $n$-$X$-small if we replace $\Sigma^0_n$ by $\Sigma^{0,X}_n$ in this definition.

We can also define a class to be small (in the sense of category) if it is meager; property~(\ref{stmt:joinProjection}) is then essentially equivalent to the weak direction of the Kuratowski-Ulam Theorem. In this case, we say that a small class $\mathcal{C}$ is $n$-small if its complement is covered by an intersection of classes described by dense $\Sigma^0_n$ sets of strings. More generally, $\mathcal{C}$ is $n$-$X$-small if we replace $\Sigma^0_n$ by $\Sigma^{0,X}_n$ in this definition. 

A set is \emph{typical} if it avoids all small classes up to some descriptive complexity. Specifically, $T\subseteq\omega$ is $n$-$X$-typical if it avoids all $n$-$X$-small classes. We note that the class of all $n$-$X$-typical sets, as a countable intersection of co-small classes, will always be co-small; in particular, for all $n$ and all $X$, every non-small class has an $n$-$X$-typical member.

\subsection{Problems and reducibilities}\label[section]{sec:problems}

A \emph{problem} is a relation $\Psf \subseteq 2^\omega \times 2^\omega$. A \emph{$\Psf$-instance} is any element of $\dom \Psf = \{ X : \exists Y (X, Y) \in \Psf \}$, and a \emph{$\Psf$-solution} to $X$ is any element of $\Psf(X) = \{ Y : (X, Y) \in \Psf \}$. Many statements studied in reverse mathematics are of the form
\begin{equation*}
(\forall X)\bracket*{\phi(X)\rightarrow(\exists Y)\bracket*{\psi(X,Y)}},
\end{equation*}
where $\phi(X)$ and $\psi(X,Y)$ are arithmetic formulas. Any such statement can be seen as a problem~$\Psf$
where $\dom \Psf = \{ X : \phi(X) \}$ and $\Psf(X) = \{ Y : \psi(X,Y) \}$. We call them \emph{$\Pi^1_2$-problems}.
There exist multiple computability-theoretic notions of reducibility to compare the strength of problems. The arguably most natural one is computable reducibility:

\begin{defn}
A problem~$\Psf$ is \emph{computably reducible} to $\Qsf$ (written $\Psf \leq_c \Qsf$) if for every~$\Psf$-instance~$X$, there is an $X$-computable $\Qsf$-instance~$\hat X$ such that for every~$\Qsf$-solution~$\hat Y$ to $\hat X$, $\hat Y \oplus X$-computes a $\Psf$-solution to~$X$.
\end{defn}

The notion of computable reducibility was formally introduced by Dzhafarov~\cite{dzhafarovCohesiveAvoidanceStrong2014}, although already present in spirit in many prior works.
A more general notion, which is the computability-theoretic counterpart of implication over~$\RCA_0$, is $\omega$-reducibility, formulated in terms of Turing ideals:

\begin{defn}\ 
\begin{itemize}
    \item[(1)] A \emph{Turing ideal} $\Mc$ is a non-empty collection of sets which is downward-closed under Turing reducibility ($\forall X \in \Mc~ \forall Y \leq_T X,\ Y \in \Mc$) and closed under the effective join ($\forall X, Y \in \Mc, X \oplus Y \in \Mc$).
    \item[(2)] A problem~$\Psf$ \emph{holds} in a Turing ideal~$\Mc$ (written $\Mc \models \Psf$) if for every~$\Psf$-instance~$X \in \Mc$, there is a $\Psf$-solution~$Y \in \Mc$.
    \item[(3)] A problem~$\Psf$ is \emph{$\omega$-reducible} to $\Qsf$ (written $\Psf \leq_\omega \Qsf$) if for every Turing ideal~$\Mc$, if $\Mc \models \Qsf$ then $\Mc \models \Psf$.
\end{itemize}
\end{defn}

Note that $\omega$-reducibility is coarser than computable reducibility, in that if $\Psf \leq_c \Qsf$, then $\Psf \leq_\omega \Qsf$, but the converse does not hold in general. Also note that if $\Psf$ and $\Qsf$ are $\Pi^1_2$-problems such that $\RCA_0 \vdash \Qsf \to \Psf$, then $\Psf \leq_\omega \Qsf$. Most separations in reverse mathematics are actually proofs of non-reduction for $\omega$-reducibility. When restricted to $\Pi^1_2$-problems, $\omega$-reducibility is a particular case of the notion of computable entailment introduced by Shore~\cite{shoreReverseMathematicsPlayground2010}.


\subsection{Main Theorem}\label[section]{sec:main-theorem}

\begin{defn}
A problem $\mathsf{Q}$ is \emph{frequently solved} if, for every $\mathsf{Q}$-instance $X$,
\begin{equation*}
\setbuildc{Z}{(\exists Y\leT X\oplus Z)\bracket*{Y \text{ is a $\Qsf$-solution to } X}}\text{ is not small};
\end{equation*}
that is, each instance has solutions below a non-small class of oracles, and thus below some typical oracle at any complexity.
\end{defn}

In the sense of category, our prototypical examples of frequently solved principles have the form ``For every $X$, there exists a sufficiently $X$-generic set''; in the sense of measure, we use ``For every $X$, there exists a sufficiently $X$-random set''.

By contrast, fixing a problem $\Psf$, we say that
\begin{defn}
A $\Psf$-instance $X$ is \emph{rarely solved} if
\begin{equation*}
\setbuildc{Z}{(\exists Y\leT X\oplus Z)\bracket*{Y \text{ is a $\Psf$-solution to } X}}\text{ is small};
\end{equation*}
that is, $X$ has solutions below only a small class of oracles (even when also provided the instance $X$). In particular, no sufficiently typical oracle can solve $X$.
\end{defn}
Naturally, a principle has a rarely-solved instance if and only if it is not frequently solved.
For our main theorem, we show that being frequently solved is preserved by $\omega$-reducibility. Specifically,

\begin{thm}\label{thm:mainTheorem}
If $\mathsf{P}$ is frequently solved, but $\mathsf{Q}$ has a rarely-solved instance $X$, then $\Qsf \not \leq_\omega \Psf$. 
\end{thm}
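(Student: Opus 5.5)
We will build a Turing ideal $\Mc$ that contains $X$, satisfies $\Psf$, and contains no $\Qsf$-solution to $X$. Then $\Qsf \not\leq_\omega \Psf$. We take $\Mc$ to be the ideal generated by $X \oplus G_0 \oplus G_1 \oplus \cdots$, i.e.\ the sets computable from $X \oplus G_0 \oplus \cdots \oplus G_n$ for some $n$. The $G_i$ are chosen one at a time so that each finite join $H_n = X \oplus G_0 \oplus \cdots \oplus G_{n-1}$ stays outside a suitable small class. The invariant is that $H_n$ never lies in
\begin{equation*}
\Ec = \setbuildc{Z}{(\exists Y\leT X\oplus Z)\bracket*{Y \text{ is a $\Qsf$-solution to } X}}.
\end{equation*}
By the hypothesis that $X$ is rarely solved, $\Ec$ is small. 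If the invariant holds for all $n$, then every member of $\Mc$ is computable from some $H_n$. So no $\Qsf$-solution to $X$ lies in $\Mc$, because such a solution would be computable from $X \oplus H_n$, which has the same degree as $H_n$.

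To make $\Mc \models \Psf$, we dovetail over all pairs $(n,e)$. At a stage handling $(n,e)$, if $W = \Phi_e^{H_n}$ is a $\Psf$-instance, we must add a $G$ such that $H_m \oplus G$ computes a $\Psf$-solution to $W$, where $m$ is the current index. By frequent solvability, the class $\Cc = \setbuildc{Z}{(\exists Y\leT W\oplus Z)\bracket*{Y \in \Psf(W)}}$ is not small. Since $W \leT H_m$, every $Z \in \Cc$ gives $H_m \oplus Z$ computing a solution. It remains to pick $Z \in \Cc$ with $H_m \oplus Z \notin \Ec$, and this is the key step.

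We use property (\ref{stmt:joinProjection}) on the small class $\Ec' = \setbuildc{A \oplus B}{A \oplus B \in \Ec}$. It would be cleaner to also require that all $H_m$ avoid a countable family of small classes closed under the operation, but one application suffices here. Because $\Ec$ is small, $\Bc(\Ec) = \setbuildc{A}{\setbuildc{Z}{A\oplus Z\in\Ec}\text{ is not small}}$ is small. So the invariant must be strengthened: $H_m \notin \Bc(\Ec)$, not merely $H_m \notin \Ec$. Then $\setbuildc{Z}{H_m\oplus Z\in\Ec}$ is small, and since $\Cc$ is not small, some $Z \in \Cc$ avoids it. But the next step needs $H_m\oplus Z \notin \Bc(\Ec)$, hence avoidance of $\Bc(\Bc(\Ec))$, and so on.

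So the real invariant is this. Let $\Ec_0 = \Ec$ and $\Ec_{k+1} = \Ec_k \cup \Bc(\Ec_k)$, and let $\Ec_\omega = \bigcup_k \Ec_k$, which is small by property (\ref{stmt:union}). Even $\Ec_\omega$ is not closed under $\Bc$, so we should iterate transfinitely, or more simply organize things so that stage $m$ needs only finitely many levels. Concretely, we keep $H_m \notin \Ec_{N-m}$ for a fixed $N$; this is finite, so instead we make the invariant $H_m \notin \Ec_\omega'$, where $\Ec_\omega'$ is defined below.

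The main obstacle is making the invariant self-sustaining. The fix is a closure: define $\Dc$ as the closure of $\Ec$ under $\Bc$ and countable unions, iterated $\omega_1$ times, i.e.\ $\Dc_\alpha$ with $\Dc_{\alpha+1} = \Dc_\alpha \cup \Bc(\Dc_\alpha)$ and unions at limit stages. Every countable stage is small by properties (\ref{stmt:joinProjection}) and (\ref{stmt:union}). A fixed point exists because $\Bc$ is monotone by property (\ref{stmt:nonempty}): if $\Ac \subseteq \Ac'$ then $\Bc(\Ac) \subseteq \Bc(\Ac')$. The difficulty is that $\Dc_{\omega_1}$ need not be small. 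To avoid this, it suffices to go through $\omega$ levels. Each $G$ is chosen so that $H_m \oplus G \notin \Ec_k$ for every $k$: at stage $m$ we assume $H_m \notin \Ec_{k+1}$ for all $k$, so $H_m \notin \Bc(\Ec_k)$ for each $k$. Then each set $\setbuildc{Z}{H_m\oplus Z\in\Ec_k}$ is small, and so is their countable union, so $\Cc$ contains a $Z$ avoiding all of them. This keeps the invariant $H_{m+1} \notin \Ec_\omega$. The base case $X \notin \Ec_\omega$ requires choosing $X$ appropriately, for example replacing $G_0$ with a $Z \notin \Ec_\omega$ and using $X \oplus Z$. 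Since $X$ is fixed, we instead check that $X$ is itself typical enough, e.g.\ by defining $\Ec$ relative to the pair and starting with $H_0 = X \oplus G$ for some $G$ outside a small class. Settling this base case is the step I expect needs the most care.
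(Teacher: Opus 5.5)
Your inductive step is sound and matches the paper's. If $H_m \notin \Ec_\omega$, then $H_m \notin \Bc(\Ec_k)$ for every $k$, so each class $\{Z : H_m \oplus Z \in \Ec_k\}$ is small. Their union is then small, and the non-small class $\Cc$ contains some $Z$ with $H_m \oplus Z \notin \Ec_\omega$. The same reasoning shows $\Bc(\Ec_\omega) \subseteq \Ec_\omega$, which is the paper's fixed-point lemma; so your remark that $\Ec_\omega$ is not closed under $\Bc$ is mistaken, and the transfinite detour is unnecessary.

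The genuine gap is the base case, which you leave open. The construction needs some $H_0 \geT X$ with $H_0 \notin \Ec_\omega$, and nothing you have proved supplies one. Smallness of $\Ec_\omega$ alone cannot do it: it does not prevent $\Ec_\omega$ from containing every set that computes $X$, since for noncomputable $X$ that upper cone is itself null and meager. Your workaround $H_0 = X \oplus G$, with $G$ outside a small class, also fails on its own. It works only if you already know that $X \oplus G \in \Ec_\omega$ if and only if $G \in \Ec_\omega$.

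The missing idea is invariance relative to $X$. Call a class $\mathcal{S}$ $X$-computational if $Z \in \mathcal{S}$ and $X \oplus Z' \geT Z$ imply $Z' \in \mathcal{S}$.
\begin{itemize}
\item The class $\Ec$ has this property by definition.
\item $\Bc$ preserves it. Suppose $A \in \Bc(\mathcal{S})$ and $X \oplus A' \geT A$. Then $X \oplus A' \oplus Z \geT A \oplus Z$ for every $Z$, so $\{Z : A \oplus Z \in \mathcal{S}\} \subseteq \{Z : A' \oplus Z \in \mathcal{S}\}$. Hence $A' \in \Bc(\mathcal{S})$.
\item Unions preserve it too, so every $\Ec_k$ and $\Ec_\omega$ is $X$-computational.
\end{itemize}
Now suppose $X \in \Ec_k$ for some $k$. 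Since $X \oplus Z \geT X$ for every $Z$, this gives $\Ec_k = 2^\omega$, contradicting smallness. So $X \notin \Ec_\omega$, and $H_0 = X$ works.

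This is exactly the role of the paper's notion of an $X_0$-computational class, together with its lemma that a small $X_0$-computational class cannot contain $X_0$. With that added, your argument is complete.
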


In particular, if $\Psf$ and $\Qsf$ are $\Pi^1_2$-problems such that $\Psf$ is frequently solved, but~$\Qsf$ has a rarely-solved instance~$X$, then  $\mathsf{P}$ does not imply $\mathsf{Q}$ over $\mathsf{RCA}_0$.

To construct the Turing ideal witnessing $\Qsf \not \leq_\omega \Psf$, we will begin with a Turing ideal containing only sets computable from $X$; since $X$ is rarely solved, this includes no solutions to the $\Qsf$-instance. We then add solutions to all $\Psf$-instances while avoiding $\mathcal{C}$, the class of all oracles that solve $X$. However, we need to prevent added $\mathsf{P}$-solutions from \emph{jointly} forcing us to add a $\Qsf$-solution to $X$; we do this by instead avoiding any set with respect to which $X$ could become frequently solved. This requires we start with some technical lemmas; we first show that the $\Bc$-operator from smallness axiom (\ref{stmt:joinProjection}) expands $\mathcal{C}$ to include more of these dangers, and then show that iterating this expansion tends towards a fixed point that remains small.

\begin{defn}
Let $\mathcal{C}\subseteq 2^{\omega}$. We say $\mathcal{C}$ is an $X_0$-\emph{computational class} if for any $X\in\mathcal{C}$ and any $Y$ with $X_0\oplus Y\geT X$, $Y\in\mathcal{C}$; that is, if $\mathcal{C}$ is closed upwards under Turing reducibility relative to~$X_0$.
\end{defn}

The following simple lemma states an important property of small $X_0$-computa\-tional classes: they avoid~$X_0$.
As we shall expand our small class by iterating the $\Bc$-operator, proving that the operator preserves being small and $X_0$-computational is a way to ensure that the expanded class still does not contain~$X_0$.

\begin{lemma}\label[lemma]{lem:small-avoids}
If $\Cc$ is a small $X_0$-computational class, then $X_0 \not \in \Cc$.
\end{lemma}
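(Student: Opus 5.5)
The plan is to argue by contradiction. Suppose $X_0 \in \Cc$. Every $Y \in 2^\omega$ satisfies $X_0 \oplus Y \geT X_0$, since $X_0$ is computable from $X_0\oplus Y$ by projecting onto the even bits. Because $\Cc$ is $X_0$-computational, every such $Y$ then belongs to $\Cc$. So $\Cc = 2^\omega$.

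Next we invoke the smallness axioms. By hypothesis $\Cc$ is small, so $2^\omega$ would be small. This contradicts property~(\ref{stmt:nontrivial}), which says $2^{\omega}$ is not small. Hence $X_0 \notin \Cc$.

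There is no real obstacle here. The only point to check is that the definition of $X_0$-computational uses the join $X_0 \oplus Y$, and this join computes $X_0$ trivially, which makes the class upward closed from $X_0$ to everything. Only axiom~(\ref{stmt:nontrivial}) is used; downward closure~(\ref{stmt:nonempty}) is not even needed, since we obtain $\Cc = 2^\omega$ outright.
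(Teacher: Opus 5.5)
Your proof is correct and is essentially the paper's argument: assuming $X_0 \in \Cc$, the trivial reduction $X_0 \oplus Y \geT X_0$ together with $X_0$-computationality forces $\Cc = 2^\omega$, which contradicts property~(\ref{stmt:nontrivial}). Your remark that only that axiom is needed is also accurate.
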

\begin{proof}
Suppose for the contradiction that $X_0 \in \Cc$. Then, since $X_0 \oplus Z \geq_T X_0$ for any set~$Z \in 2^\omega$,
$Z \in \Cc$, so $\Cc = 2^\omega$ which is not small.
\end{proof}

The next lemma shows that the $\Bc$-operator preserves being small and an $X_0$-computational class:

\begin{lemma}\label{lem:BOperator}
If $\mathcal{S}\subseteq 2^{\omega}$ is a small $X_0$-computational class, then
\begin{align*}
\mathcal{B}(\mathcal{S})
&=\setbuildc*{X}{\setbuildc*{Z}{X_0\oplus X\oplus Z\in\mathcal{S}}\text{ is not small}}\\
&=\setbuildc*{X}{\setbuildc*{Z}{X\oplus Z\in\mathcal{S}}\text{ is not small}}.
\end{align*}
is a small $X_0$-computational class with $\mathcal{S}\subseteq\mathcal{B}(\mathcal{S})$.
\end{lemma}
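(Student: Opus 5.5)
We will verify four things in turn: the two descriptions of $\mathcal{B}(\mathcal{S})$ coincide, $\mathcal{S}\subseteq\mathcal{B}(\mathcal{S})$, $\mathcal{B}(\mathcal{S})$ is small, and $\mathcal{B}(\mathcal{S})$ is $X_0$-computational. Each step reduces to the upward closure of $\mathcal{S}$ under $X_0$-relative Turing reducibility, together with the smallness axioms.

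\emph{Equality of the two descriptions.} For fixed $X$ and $Z$, the sets $X\oplus Z$ and $X_0\oplus X\oplus Z$ are Turing equivalent relative to $X_0$. Indeed, $X_0\oplus(X\oplus Z)$ computes $X_0\oplus X\oplus Z$, and conversely $X_0\oplus(X_0\oplus X\oplus Z)$ computes $X\oplus Z$. Since $\mathcal{S}$ is $X_0$-computational, $X\oplus Z\in\mathcal{S}$ if and only if $X_0\oplus X\oplus Z\in\mathcal{S}$. Hence the two $Z$-sections are identical, and so are the two classes.

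\emph{Containment.} Let $X\in\mathcal{S}$. For every $Z$, the set $X_0\oplus(X\oplus Z)$ computes $X$, so $X\oplus Z\in\mathcal{S}$ by computationality. The section $\{Z : X\oplus Z\in\mathcal{S}\}$ is therefore all of $2^\omega$, which is not small by axiom~(\ref{stmt:nontrivial}). Thus $X\in\mathcal{B}(\mathcal{S})$.

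\emph{Smallness.} This follows directly from axiom~(\ref{stmt:joinProjection}) applied to $\mathcal{S}$, using the second description of $\mathcal{B}(\mathcal{S})$.

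\emph{Computationality.} Let $X\in\mathcal{B}(\mathcal{S})$ and let $Y$ satisfy $X_0\oplus Y\geT X$. For any $Z$ with $X\oplus Z\in\mathcal{S}$, we have $X_0\oplus(Y\oplus Z)\geT X\oplus Z$, so $Y\oplus Z\in\mathcal{S}$ by computationality. Hence
\[
\{Z: X\oplus Z\in\mathcal{S}\}\subseteq\{Z: Y\oplus Z\in\mathcal{S}\}.
\]
The left-hand class is not small. By the contrapositive of axiom~(\ref{stmt:nonempty}), the right-hand class is not small either, so $Y\in\mathcal{B}(\mathcal{S})$.

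None of these steps is a real obstacle. The only points needing care are the bookkeeping of which oracle, $X_0\oplus(\cdot)$, witnesses each reduction, and remembering that axiom~(\ref{stmt:nonempty}) is used contrapositively, since a superset of a non-small class is non-small.
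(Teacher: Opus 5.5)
Your proof is correct and follows the paper's argument essentially step for step. Both use Turing equivalence relative to $X_0$ to identify the two descriptions, note that the section over any $X\in\mathcal{S}$ is all of $2^\omega$, use the section inclusion $\{Z : X\oplus Z\in\mathcal{S}\}\subseteq\{Z : Y\oplus Z\in\mathcal{S}\}$ for $X_0$-computationality, and invoke property~(4) for smallness; your explicit appeal to the contrapositive of property~(2) only spells out what the paper leaves implicit.
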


\begin{proof}
First, we note that the two specifications of $\mathcal{B}(\mathcal{S})$ are in fact the same, as $X_0\oplus X\oplus Z\in\mathcal{S}$ if and only if $X\oplus Z\in\mathcal{S}$. (After all, $X_0\oplus X\oplus Z$ is equivalent to $X\oplus Z$ under Turing reducibility relative to $X_0$.)

Consider $X\in\mathcal{S}$. Since $X\oplus Z\geT X$ and $\mathcal{S}$ is a $X_0$-computational class, $X\oplus Z\in\mathcal{S}$ for every $Z\in 2^{\omega}$. Thus, $\setbuildc{Z}{X\oplus Z\in\mathcal{S}} = 2^\omega$ is certainly not small, so $X\in\mathcal{B}(\mathcal{S})$. Since $X$ was arbitrary, $\mathcal{S}\subseteq\mathcal{B}(\mathcal{S})$.

Next, suppose $X\in\mathcal{B}(\mathcal{S})$ and $X_0\oplus Y\geT X$. Clearly, $X_0\oplus Y\oplus Z\geT X\oplus Z$ for any $Z\in 2^{\omega}$. Thus, since $\mathcal{S}$ is an $X_0$-computational class, $X\oplus Z\in\mathcal{S}$ implies $Y\oplus Z\in\mathcal{S}$. This means that $\setbuildc{Z}{Y\oplus Z\in\mathcal{S}}\supseteq\setbuildc{Z}{X\oplus Z\in\mathcal{S}}$ (which is not small), so $Y\in\mathcal{B}(\mathcal{S})$. Therefore, $\mathcal{B}(\mathcal{S})$ is an $X_0$-computational class.

Last, since $\mathcal{S}$ is small, $\mathcal{B}(\mathcal{S})$ is small by property~(\ref{stmt:joinProjection}).
\end{proof}

Given a small $X_0$-computational class $\mathcal{C}$, let $\mathcal{B}^0_{\mathcal{C}}=\mathcal{C}$, and define $\mathcal{B}^{n+1}_{\mathcal{C}}=\mathcal{B}(\mathcal{B}^n_{\mathcal{C}})$. By Lemma~\ref{lem:BOperator}, these classes form an increasing chain of small $X_0$-computational classes
\begin{equation*}
\mathcal{C}=\mathcal{B}^0_{\mathcal{C}}\subseteq\mathcal{B}^1_{\mathcal{C}}\subseteq\mathcal{B}^2_{\mathcal{C}}\subseteq\cdots
\end{equation*}
Let $\mathcal{B}^{\omega}_{\mathcal{C}}=\bigcup_{n\in\omega}{\mathcal{B}^n_{\mathcal{C}}}$. We show that $\mathcal{B}^{\omega}_{\mathcal{C}}$ is a small $X_0$-computational class and a fixed point for the $\mathcal{B}$ operator.

\begin{lemma}\label[lemma]{lem:smallness-closure-B}
If $\mathcal{C}$ is a small $X_0$-computational class, so is $\mathcal{B}^{\omega}_{\mathcal{C}}$.
\end{lemma}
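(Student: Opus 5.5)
By induction on $n$, using Lemma~\ref{lem:BOperator}, each $\mathcal{B}^n_{\mathcal{C}}$ is a small $X_0$-computational class. The base case $n=0$ is the hypothesis on $\mathcal{C}$. For the inductive step, if $\mathcal{B}^n_{\mathcal{C}}$ is small and $X_0$-computational, then Lemma~\ref{lem:BOperator} gives that $\mathcal{B}^{n+1}_{\mathcal{C}}=\mathcal{B}(\mathcal{B}^n_{\mathcal{C}})$ is again small and $X_0$-computational, and also contains $\mathcal{B}^n_{\mathcal{C}}$. This is the increasing chain already noted before the statement.

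Smallness of $\mathcal{B}^{\omega}_{\mathcal{C}}=\bigcup_n \mathcal{B}^n_{\mathcal{C}}$ then follows at once from property~(\ref{stmt:union}), since it is a countable union of small classes.

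For the $X_0$-computational property, take $X\in\mathcal{B}^{\omega}_{\mathcal{C}}$ and $Y$ with $X_0\oplus Y\geT X$. Then $X\in\mathcal{B}^n_{\mathcal{C}}$ for some $n$. Since $\mathcal{B}^n_{\mathcal{C}}$ is $X_0$-computational, $Y\in\mathcal{B}^n_{\mathcal{C}}\subseteq\mathcal{B}^{\omega}_{\mathcal{C}}$. So upward closure passes to the union because each witness lives at a single finite stage.

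The text also promises that $\mathcal{B}^{\omega}_{\mathcal{C}}$ is a fixed point of $\mathcal{B}$, and I would prove this here as well; it is the only step needing care. The inclusion $\mathcal{B}^{\omega}_{\mathcal{C}}\subseteq\mathcal{B}(\mathcal{B}^{\omega}_{\mathcal{C}})$ is Lemma~\ref{lem:BOperator}, applied to the small $X_0$-computational class just obtained.

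For the reverse inclusion, suppose $X\in\mathcal{B}(\mathcal{B}^{\omega}_{\mathcal{C}})$, so $\setbuildc{Z}{X\oplus Z\in\mathcal{B}^{\omega}_{\mathcal{C}}}$ is not small. This class equals $\bigcup_n \setbuildc{Z}{X\oplus Z\in\mathcal{B}^n_{\mathcal{C}}}$. By property~(\ref{stmt:union}), some term $\setbuildc{Z}{X\oplus Z\in\mathcal{B}^n_{\mathcal{C}}}$ must be non-small, since otherwise the union would be small. Hence $X\in\mathcal{B}^{n+1}_{\mathcal{C}}\subseteq\mathcal{B}^{\omega}_{\mathcal{C}}$.

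The main obstacle is this reverse inclusion: commuting $\mathcal{B}$ with the countable union. It is handled exactly by the $\sigma$-ideal property, through its contrapositive form "a non-small countable union has a non-small piece."
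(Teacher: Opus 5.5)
Your proof is correct and follows the paper's: $\mathcal{B}^{\omega}_{\mathcal{C}}$ is small by closure under countable unions, and it is $X_0$-computational because any witness $X$ already lies in a single $X_0$-computational $\mathcal{B}^n_{\mathcal{C}}$. The fixed-point argument you add is not needed for this statement; it is essentially the paper's proof of the next result, Lemma~\ref{lem:fixedPoint}.
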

\begin{proof}
Since $\mathcal{B}^{\omega}_{\mathcal{C}}$ is an increasing countable union of small classes, it must itself be small. Let $X_0\oplus Y\geT X$ for some $X\in\mathcal{B}^{\omega}_{\mathcal{C}}$. Since $X\in\mathcal{B}^n_{\mathcal{C}}$ for some $n$, and $\mathcal{B}^n_{\mathcal{C}}$ is an $X_0$-computational class, $Y\in\mathcal{B}^n_{\mathcal{C}}\subseteq\mathcal{B}^{\omega}_{\mathcal{C}}$. Therefore, $\mathcal{B}^{\omega}_{\mathcal{C}}$ is an $X_0$-computational class.
\end{proof}

\begin{lemma}\label{lem:fixedPoint}
For any small $X_0$-computational class $\mathcal{C}$, $\mathcal{B}(\mathcal{B}^{\omega}_{\mathcal{C}})=\mathcal{B}^{\omega}_{\mathcal{C}}$.
\end{lemma}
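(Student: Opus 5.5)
We need to show $\mathcal{B}(\mathcal{B}^{\omega}_{\mathcal{C}})=\mathcal{B}^{\omega}_{\mathcal{C}}$, and the plan is to prove the two inclusions separately. The inclusion $\mathcal{B}^{\omega}_{\mathcal{C}}\subseteq\mathcal{B}(\mathcal{B}^{\omega}_{\mathcal{C}})$ is immediate. By \Cref{lem:smallness-closure-B}, $\mathcal{B}^{\omega}_{\mathcal{C}}$ is a small $X_0$-computational class, so Lemma~\ref{lem:BOperator} applies to $\mathcal{S}=\mathcal{B}^{\omega}_{\mathcal{C}}$ and gives $\mathcal{S}\subseteq\mathcal{B}(\mathcal{S})$.

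For the reverse inclusion, fix $X\in\mathcal{B}(\mathcal{B}^{\omega}_{\mathcal{C}})$, so that the class $\mathcal{Z}=\setbuildc{Z}{X\oplus Z\in\mathcal{B}^{\omega}_{\mathcal{C}}}$ is not small. Since $\mathcal{B}^{\omega}_{\mathcal{C}}=\bigcup_n\mathcal{B}^n_{\mathcal{C}}$, we can write $\mathcal{Z}=\bigcup_{n\in\omega}\mathcal{Z}_n$, where $\mathcal{Z}_n=\setbuildc{Z}{X\oplus Z\in\mathcal{B}^n_{\mathcal{C}}}$. By property~(\ref{stmt:union}), if every $\mathcal{Z}_n$ were small then $\mathcal{Z}$ would be small. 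Hence some $\mathcal{Z}_n$ is not small. By definition this means $X\in\mathcal{B}(\mathcal{B}^n_{\mathcal{C}})=\mathcal{B}^{n+1}_{\mathcal{C}}\subseteq\mathcal{B}^{\omega}_{\mathcal{C}}$.

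The only step needing care is this $\sigma$-ideal argument: we use countable additivity of smallness to pull non-smallness down to a single level of the chain. Monotonicity of the chain is not even needed there, only that the union is countable. I do not expect any real obstacle beyond checking that $\mathcal{B}$ is applied with the same definition at each level, which Lemma~\ref{lem:BOperator} guarantees via the equality of its two descriptions.
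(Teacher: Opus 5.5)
Your proof is correct and follows the paper's argument essentially verbatim: you get $\mathcal{B}^{\omega}_{\mathcal{C}}\subseteq\mathcal{B}(\mathcal{B}^{\omega}_{\mathcal{C}})$ from Lemmas~\ref{lem:smallness-closure-B} and~\ref{lem:BOperator}, and the reverse inclusion by pulling non-smallness down to a single level $\mathcal{B}^n_{\mathcal{C}}$ of the chain. The only differences are cosmetic. You use the $X\oplus Z$ form of $\mathcal{B}$ where the paper uses $X_0\oplus X\oplus Z$. You also invoke full countable additivity, whereas the paper stresses that the union is increasing; it later uses this in its remark that axiom~(\ref{stmt:union}) is only needed for increasing chains.
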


\begin{proof}
Suppose $X\in\mathcal{B}(\mathcal{B}^{\omega}_{\mathcal{C}})$; that is, $\setbuildc{Z}{X_0\oplus X\oplus Z\in\mathcal{B}^{\omega}_{\mathcal{C}}}$ is not small. Let $\chi_n=\setbuildc{Z}{X_0\oplus X\oplus Z\in\mathcal{B}^n_{\mathcal{C}}}$; clearly, $\chi_n\subseteq\chi_{n+1}$, and
\[
\bigcup_{n\in\omega}{\chi_n}=\setbuildc{Z}{X_0\oplus X\oplus Z\in\mathcal{B}^{\omega}_{\mathcal{C}}}.
\]
Since this increasing countable union is not small, there must be some $\chi_n$ that is not small, so $X\in\mathcal{B}(\mathcal{B}^n_{\mathcal{C}})=\mathcal{B}^{n+1}_{\mathcal{C}}\subseteq\mathcal{B}^{\omega}_{\mathcal{C}}$. We have thus established that $\mathcal{B}(\mathcal{B}^{\omega}_{\mathcal{C}}) \subseteq \mathcal{B}^{\omega}_{\mathcal{C}}$. The reverse inclusion follows directly from Lemma~\ref{lem:smallness-closure-B} and Lemma~\ref{lem:BOperator}.
\end{proof}

Towards our main theorem, we next show that for any $\mathsf{P}$-instance, we can always add a solution without adding any elements of a small class.

\begin{lemma}\label{lem:classAvoidance}
Let $\mathsf{P}$ be a frequently solved problem, and let $\mathcal{C}$ be a small $X_0$-computational class. Fix some $A\subseteq\omega$ such that $A\not\in\mathcal{B}^{\omega}_{\mathcal{C}}$.

If $X \leT A$ is a $\mathsf{P}$-instance, then $X$ has a $\mathsf{P}$-solution $Y$ such that $A\oplus Y\not\in\mathcal{B}^{\omega}_{\mathcal{C}}$.
\end{lemma}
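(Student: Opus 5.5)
Since $\mathsf{P}$ is frequently solved and $X$ is a $\mathsf{P}$-instance, the class
\[
\mathcal{D}=\setbuildc{Z}{(\exists Y\leT X\oplus Z)\bracket*{Y\text{ is a }\mathsf{P}\text{-solution to }X}}
\]
is not small. On the other hand, since $A\notin\mathcal{B}^{\omega}_{\mathcal{C}}$ and $\mathcal{B}(\mathcal{B}^{\omega}_{\mathcal{C}})=\mathcal{B}^{\omega}_{\mathcal{C}}$ by Lemma~\ref{lem:fixedPoint}, we also have $A\notin\mathcal{B}(\mathcal{B}^{\omega}_{\mathcal{C}})$. By definition of the $\mathcal{B}$-operator, this says that the class $\mathcal{E}=\setbuildc{Z}{A\oplus Z\in\mathcal{B}^{\omega}_{\mathcal{C}}}$ is small. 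The plan is to choose $Z\in\mathcal{D}\setminus\mathcal{E}$ and then extract a solution $Y$ from $Z$.

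First we show that $\mathcal{D}\setminus\mathcal{E}$ is nonempty. If $\mathcal{D}\subseteq\mathcal{E}$, then $\mathcal{D}$ would be small by property~(\ref{stmt:nonempty}), which contradicts our hypothesis. So we may pick $Z\in\mathcal{D}$ with $A\oplus Z\notin\mathcal{B}^{\omega}_{\mathcal{C}}$. Because $Z\in\mathcal{D}$, there is a $\mathsf{P}$-solution $Y$ to $X$ with $Y\leT X\oplus Z$. Since $X\leT A$, this gives $Y\leT A\oplus Z$, and hence $A\oplus Y\leT A\oplus Z$.

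It remains to show that $A\oplus Y\notin\mathcal{B}^{\omega}_{\mathcal{C}}$. By Lemma~\ref{lem:smallness-closure-B}, $\mathcal{B}^{\omega}_{\mathcal{C}}$ is an $X_0$-computational class, so it is closed upward under $\leT$ relative to $X_0$. Suppose $A\oplus Y\in\mathcal{B}^{\omega}_{\mathcal{C}}$. Then $X_0\oplus(A\oplus Z)\geT A\oplus Y$, so upward closure gives $A\oplus Z\in\mathcal{B}^{\omega}_{\mathcal{C}}$, contradicting the choice of $Z$. Therefore $Y$ is the required solution.

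The main point of the argument is to use the fixed-point Lemma~\ref{lem:fixedPoint}. It turns the hypothesis $A\notin\mathcal{B}^{\omega}_{\mathcal{C}}$ into smallness of the fiber $\mathcal{E}$, which is what allows us to find a typical oracle relative to $A$ that avoids the class. Once that is in place, the remaining steps are routine uses of the closure properties. The only care needed is in the direction of reducibility: upward closure of $\mathcal{B}^{\omega}_{\mathcal{C}}$ must be applied to $A\oplus Z$, which computes $A\oplus Y$, so that membership of $A\oplus Y$ would force membership of $A\oplus Z$.
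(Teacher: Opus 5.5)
Your proof is correct and follows essentially the same route as the paper: you use Lemma~\ref{lem:fixedPoint} to get smallness of the fiber over $A$, pick a solution-computing oracle $Z$ outside that fiber, and transfer non-membership to $A\oplus Y$ via the $X_0$-computational closure of $\mathcal{B}^{\omega}_{\mathcal{C}}$. The only cosmetic difference is that you work with the fiber $\{Z : A\oplus Z\in\mathcal{B}^{\omega}_{\mathcal{C}}\}$ instead of the paper's $\{Z : X_0\oplus A\oplus Z\in\mathcal{B}^{\omega}_{\mathcal{C}}\}$; these coincide by Lemma~\ref{lem:BOperator}.
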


\begin{proof}
By Lemma~\ref{lem:fixedPoint}, $A\not\in\mathcal{B}(\mathcal{B}^{\omega}_{\mathcal{C}})$. Thus, $\setbuildc{Z}{X_0\oplus A\oplus Z\in\mathcal{B}^{\omega}_{\mathcal{C}}}$ is small.

Since $\mathsf{P}$ is frequently solved, the class 
$$\setbuildc{Z}{X \oplus Z \text{ computes a $\Psf$-solution to } X }$$ 
is not small. Therefore, there exist sets $Z$ and $Y$ such that $X \oplus Z\geT Y$, $Y$ is a $\mathsf{P}$-solution to $X$, and $X_0\oplus A\oplus Z\not\in\mathcal{B}^{\omega}_{\mathcal{C}}$. Since $A \geT X$ and $\mathcal{B}^{\omega}_{\mathcal{C}}$ is an $X_0$-computational class, $A\oplus Y\not\in\mathcal{B}^{\omega}_{\mathcal{C}}$.
\end{proof}

\begin{thm}\label{thm:classAvoidance}
Let $\mathsf{P}$ be a frequently-solved problem, and let $\mathcal{C}$ be a small $X_0$-computational class. There exists a Turing ideal~$\Mc \models \mathsf{P}$ that contains $X_0$, but includes no sets from $\mathcal{C}$.
\end{thm}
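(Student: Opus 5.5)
We build $\Mc$ as the Turing ideal generated by an increasing sequence of sets $A_0 \leT A_1 \leT A_2 \leT \cdots$, namely $\Mc = \setbuildc{W}{(\exists s)\, W \leT A_s}$. We maintain the invariant $A_s \not\in \mathcal{B}^{\omega}_{\mathcal{C}}$. Since $\mathcal{C} = \mathcal{B}^0_{\mathcal{C}} \subseteq \mathcal{B}^{\omega}_{\mathcal{C}}$ and $\mathcal{B}^{\omega}_{\mathcal{C}}$ is $X_0$-computational (\Cref{lem:smallness-closure-B}), the invariant implies that no $W \leT A_s$ lies in $\mathcal{C}$. Indeed, if $W \in \mathcal{C}$ and $X_0 \oplus A_s \geT W$, then $A_s \in \mathcal{B}^{\omega}_{\mathcal{C}}$. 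Hence $\Mc \cap \mathcal{C} = \emptyset$.

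For the start, set $A_0 = X_0$. Since $\mathcal{B}^{\omega}_{\mathcal{C}}$ is a small $X_0$-computational class by \Cref{lem:smallness-closure-B}, \Cref{lem:small-avoids} gives $X_0 \not\in \mathcal{B}^{\omega}_{\mathcal{C}}$, so the invariant holds at stage $0$, and $X_0 \in \Mc$.

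For the inductive step, fix a bookkeeping enumeration of pairs $\langle s,e\rangle$ in which every pair appears at some stage $t \ge s$. At stage $t+1$, handling the pair $\langle s,e\rangle$, we check whether $\Phi_e^{A_s}$ is total and is a $\Psf$-instance. This is a non-effective question, which is fine because the construction need not be effective. If it is not, set $A_{t+1} = A_t$. If it is, then $X = \Phi_e^{A_s} \leT A_t$. Since $A_t \not\in \mathcal{B}^{\omega}_{\mathcal{C}}$, \Cref{lem:classAvoidance} with $A = A_t$ yields a $\Psf$-solution $Y$ to $X$ such that $A_t \oplus Y \not\in \mathcal{B}^{\omega}_{\mathcal{C}}$. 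Set $A_{t+1} = A_t \oplus Y$; this preserves the invariant.

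Finally, $\Mc$ is a Turing ideal: it is downward closed by definition, and it is closed under join because any two members lie below a common $A_t$. Every $\Psf$-instance in $\Mc$ is of the form $\Phi_e^{A_s}$ for some $s$ and $e$, and so it receives a solution in $\Mc$ at the stage handling $\langle s,e\rangle$. Thus $\Mc \models \Psf$. The main obstacle is already absorbed into \Cref{lem:classAvoidance} and the fixed-point \Cref{lem:fixedPoint}. What remains is to check that the invariant is stated for $A_t$ rather than $X_0 \oplus A_t$; this is harmless because $X_0 \leT A_t$ and the class is $X_0$-computational.
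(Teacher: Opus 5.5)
Your proof is correct and takes the same approach as the paper. You start from $X_0$, use \Cref{lem:small-avoids} and \Cref{lem:smallness-closure-B} to get $X_0 \notin \mathcal{B}^{\omega}_{\mathcal{C}}$, and dovetail applications of \Cref{lem:classAvoidance} while keeping every stage outside $\mathcal{B}^{\omega}_{\mathcal{C}} \supseteq \mathcal{C}$; you simply write out the bookkeeping, and the check that the resulting ideal is a Turing ideal satisfying $\Psf$, which the paper leaves as ``the standard dovetailing manner.''
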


\begin{proof}
We start with the Turing ideal containing only the sets computable from $X_0$. 
By \Cref{lem:small-avoids} and \Cref{lem:smallness-closure-B}, $X_0 \not \in \Bc_\Cc^\omega$.
Iterating Lemma~\ref{lem:classAvoidance}, we add solutions to $\mathsf{P}$-instances in the standard dovetailing manner while avoiding all computations of elements of $\mathcal{B}^{\omega}_{\mathcal{C}}$. Since $\mathcal{C}\subseteq\mathcal{B}^{\omega}_{\mathcal{C}}$, the resulting Turing ideal~$\Mc$ will satisfy $\mathcal{M}\cap\mathcal{C}=\emptyset$.
\end{proof}

Finally, we apply this to the class of oracles that solve a rarely-solved $\mathsf{Q}$-instance, and obtain our main result.

\begin{proof}[Proof of Theorem~\ref{thm:mainTheorem}]
Fix some rarely-solved instance $X_0$ of $\mathsf{Q}$, and let
\[
\mathcal{C}=\setbuildc{Z}{X_0\oplus Z \text{ computes a $\Qsf$-solution to } X_0 }.
\]
$\mathcal{C}$ is an $X_0$-computational class by definition, and it is small by assumption. By Theorem~\ref{thm:classAvoidance}, there is a Turing ideal  $\mathcal{M}$ such that $\mathcal{M}\models\mathsf{P}$, $\mathcal{M}$ includes $X_0$, and $\mathcal{M}$ includes no sets from $\mathcal{C}$. In particular, $\mathcal{M}$ contains no $\mathsf{Q}$-solutions to $X_0$, so $\mathcal{M}\nmodels\mathsf{Q}$.
\end{proof}

\begin{rem}
Note that the proof of \Cref{thm:mainTheorem} used the axiom (\ref{stmt:union}) of smallness only for increasing sequences of small classes ($\Bc_0 \subseteq \Bc_1 \subseteq \cdots$). One could therefore restrict the axiom to such sequences. However, as soon as we require our notion of smallness to be an ideal -- which is a reasonable assumption removing pathological behaviors --  the axiom of increasing countable unions implies the axiom of arbitrary countable unions.
\end{rem}

\section{Analysis and Variants}\label{sec:variants}
On inspection, our notions of frequently-solved problems and/or rarely-solved instances leave some room for variation. For instance, we could define three possible properties of a problem $\mathsf{R}$:
\begin{enumerate}
\item[P$_1$:] There is a computable instance $C$ such that only a small class of $X$'s compute a solution to $C$.
\item[P$_2$:] For a co-small class of $Z$'s, $Z$ computes an instance $C_Z$ for which $X\oplus Z$ computes a solution to $C_Z$ for only a small class of $X$'s.
\item[P$_3$:] There is an instance $C$ such that only a small class of $X$'s compute a solution to $C$.
\end{enumerate}

Phrased in terms of our definitions from above, we recognize each as speaking about rarely-solved instances; $\mathsf{R}$ has P$_1$ if there is a computable rarely-solved instance, P$_2$ if most $Z$'s compute a $Z$-rarely-solved instance, and P$_3$ if there is any rarely-solved instance at all.

\begin{prop}\label[proposition]{prop:pi-linearly-ordered}
$\textup{P}_1\Rightarrow\textup{P}_2\Rightarrow\textup{P}_3$, and each of these implications is strict, as witnessed by $\Pi^1_2$-problems.
\end{prop}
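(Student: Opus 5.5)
The plan is to prove the two implications directly from the smallness axioms, and then to give explicit $\Pi^1_2$-problems for strictness, relying on cone-avoidance facts for the two standard notions (measure and category).

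\emph{P$_1\Rightarrow$P$_2$.} Let $C$ be a computable instance such that $\mathcal{S}=\setbuildc{W}{W \text{ computes a solution to } C}$ is small. The class $\mathcal{S}$ is closed upward under $\leT$, so $Z\oplus X\in\mathcal{S}$ if and only if $X\oplus Z\in\mathcal{S}$. By property~(\ref{stmt:joinProjection}), $\mathcal{B}(\mathcal{S})=\setbuildc{Z}{\setbuildc{X}{Z\oplus X\in\mathcal{S}}\text{ is not small}}$ is small. For every $Z$ in the co-small class $2^\omega\setminus\mathcal{B}(\mathcal{S})$, put $C_Z=C$, which $Z$ computes. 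Then $\setbuildc{X}{X\oplus Z \text{ computes a solution to } C_Z}$ is small, which is exactly P$_2$.

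\emph{P$_2\Rightarrow$P$_3$.} A co-small class is nonempty, since otherwise $2^\omega$ would be small by property~(\ref{stmt:nontrivial}). So fix some $Z$ witnessing P$_2$. Since $C_Z\leT Z$, we have
\[
\setbuildc{X}{C_Z\oplus X \text{ computes a solution to } C_Z}\subseteq\setbuildc{X}{X\oplus Z \text{ computes a solution to } C_Z}.
\]
The right-hand side is small, so by property~(\ref{stmt:nonempty}) $C_Z$ is a rarely-solved instance.

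\emph{Strictness.} Here I would use concrete problems and the standard fact, valid for both measure and category, that if $A\not\leT B$ then $\setbuildc{X}{X\oplus B\geT A}$ is small. This is relativized Sacks' theorem for measure, and the analogous genericity argument for category.

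To show P$_3\not\Rightarrow$P$_2$, take the problem whose only instance is $\emptyset'$ (the condition ``$X=\emptyset'$'' is arithmetic) and whose solutions are the sets $Y\geT\emptyset''$.
\begin{itemize}
\item The instance is rarely solved, because $\setbuildc{Z}{\emptyset'\oplus Z\geT\emptyset''}$ is small; this gives P$_3$.
\item The class of $Z\geT\emptyset'$ is small, so a co-small class of $Z$ computes no instance at all; hence P$_2$ fails.
\end{itemize}

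To show P$_2\not\Rightarrow$P$_1$, take the problem whose instances are the noncomputable sets $X$, with solutions the sets $Y\geT X'$ (an arithmetic relation in $X\oplus Y$).
\begin{itemize}
\item There are no computable instances, so P$_1$ fails vacuously.
\item Every noncomputable $Z$ (a co-small class) computes the instance $C_Z=Z$. The class $\setbuildc{X}{X\oplus Z\geT Z'}$ is small because $Z'\not\leT Z$; this gives P$_2$.
\end{itemize}

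The step I expect to be the main obstacle is justifying the cone-avoidance facts. They do not follow from the abstract axioms (\ref{stmt:nontrivial})--(\ref{stmt:joinProjection}), so the strictness claims must be verified separately for measure and for category. For measure I would cite relativized Sacks' theorem. For category I would argue directly that a sufficiently $B\oplus A$-generic $X$ has $X\oplus B\not\geT A$ whenever $A\not\leT B$.
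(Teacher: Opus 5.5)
Your proposal is correct and follows the paper's proof. The strictness witnesses are exactly the paper's two problems (``if $X$ is noncomputable then $X'$ exists'' and ``if $X=\emptyset'$ then $X'$ exists''), and your arguments for the two implications spell out what the paper calls clear. In particular, you make explicit the appeal to property~(\ref{stmt:joinProjection}) needed for P$_1\Rightarrow$P$_2$, and you check category as well as measure, whereas the paper only treats the null case.
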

\begin{proof}
The implications are clear from the observations above. To see that P$_2$ does not imply P$_1$, consider the principle $\mathsf{R}$ stating ``for all $X$, if $X$ is not computable, then $X'$ exists.''  Taking small to mean null, $\mathsf{R}$ has property P$_2$ but not property P$_1$. Similarly for P$_2$ versus P$_3$, consider the principle $\mathsf{R}$ stating ``for all $X$, if $X=0'$, then $X'$ exists.''  Again taking small to mean null, $\mathsf{R}$ has property P$_3$ but not property P$_2$.
\end{proof}

As it turns out, most natural problems studied in reverse mathematics satisfying property P$_3$ also satisfy P$_1$, that is, the rarely-solved instance is witnessed by a computable instance.  On its face, property~P$_2$ seems to have few advantages, and the property itself can be argued to be less natural in this presentation. However, it has one significant advantage: under common assumptions, we can reformulate P$_2$ to apply to non-$\Pi^1_2$ second-order principles while maintaining the resulting $\omega$-model separation.  For this purpose, given sets $\set{X_i}_{i<\omega}$, we write $\mathcal{I}(\set{X_i}_{i<\omega})$ for the Turing ideal they generate, and (by an abuse of notation) also for the $\omega$-model with this ideal as its second-order part.

\begin{defn}

A statement $\mathsf{R}$ has the \emph{no-typical-model} (NTM) property if the class of $X=\bigoplus_{i<\omega}{X_i}$ such that $\mathcal{I}(\set{X_i}_{i<\omega})\vDash\mathsf{R}$ is small.
\end{defn}

We obtain our main separation as applied to the NTM property:
\begin{obs}\label{obs:NTMreduce}
If $\mathsf{P}$ does not have the NTM property, but $\mathsf{Q}$ does, then $\mathsf{Q} \nleq_\omega \mathsf{P}$. In particular, $\mathsf{P}$ does not imply $\mathsf{Q}$ over $\mathsf{RCA}_0$. 

Indeed, letting $\Ac$ and $\Bc$ be the classes of all $X=\bigoplus_{i<\omega}{X_i}$ such that $\mathcal{I}(\set{X_i}_{i<\omega})\vDash\mathsf{P}$ and $\mathcal{I}(\set{X_i}_{i<\omega})\vDash\mathsf{Q}$, respectively, $\Bc$ is small but not $\Ac$, so by Property (\ref{stmt:nonempty}) of smallness, $\Ac \not \subseteq \Bc$. In particular, there  is some~$X=\bigoplus_{i<\omega}{X_i} \in \Ac \setminus \Bc$. By definition of $\Ac$ and $\Bc$, $\mathcal{I}(\set{X_i}_{i<\omega})\vDash\mathsf{P}$ but  $\mathcal{I}(\set{X_i}_{i<\omega})\not\models \mathsf{Q}$.
\end{obs}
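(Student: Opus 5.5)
The observation already comes with a short argument. My plan is to restate it cleanly and check each step against the smallness axioms.

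First, I fix the two classes
\[
\Ac=\setbuildc*{X=\textstyle\bigoplus_{i<\omega}X_i}{\mathcal{I}(\set{X_i}_{i<\omega})\models\Psf},\qquad
\Bc=\setbuildc*{X=\textstyle\bigoplus_{i<\omega}X_i}{\mathcal{I}(\set{X_i}_{i<\omega})\models\Qsf}.
\]
Here $\bigoplus_{i<\omega}X_i$ is the standard infinite join. Every $X\in 2^\omega$ decodes uniquely as such a join, so both classes are well-defined subsets of $2^\omega$. By hypothesis $\Qsf$ has the NTM property, so $\Bc$ is small. Since $\Psf$ lacks the NTM property, $\Ac$ is not small.

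Next, I argue that $\Ac\not\subseteq\Bc$. If $\Ac\subseteq\Bc$, then property~(\ref{stmt:nonempty}) would make $\Ac$ small, which is a contradiction. So I can pick some $X=\bigoplus_i X_i\in\Ac\setminus\Bc$. The Turing ideal $\Mc=\mathcal{I}(\set{X_i}_{i<\omega})$ is then a Turing ideal with $\Mc\models\Psf$ and $\Mc\not\models\Qsf$. This is exactly a witness to $\Qsf\not\leq_\omega\Psf$.

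The final clause concerns implication over $\RCA_0$. For $\Pi^1_2$-problems it follows from the remark earlier in the paper that $\RCA_0\vdash\Psf\to\Qsf$ implies $\Qsf\leq_\omega\Psf$. For general second-order statements I would instead argue directly with the $\omega$-model $\mathcal{I}(\set{X_i}_{i<\omega})$. Every $\omega$-model whose second-order part is a Turing ideal satisfies $\RCA_0$. So if $\RCA_0\vdash\Psf\to\Qsf$ held, this model would satisfy $\Qsf$, contradicting the choice of $X$.

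The only point needing care is the meaning of ``$\models$'' for non-$\Pi^1_2$ statements. Here it must be read as satisfaction in the full $\omega$-model rather than the problem-style definition, and the previous paragraph handles this. Beyond that point, the argument is immediate.
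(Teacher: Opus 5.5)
Your proposal is correct and follows the paper's argument exactly. $\Bc$ is small and $\Ac$ is not, so property~(\ref{stmt:nonempty}) forces $\Ac\not\subseteq\Bc$, and any $X\in\Ac\setminus\Bc$ gives a Turing ideal satisfying $\Psf$ but not $\Qsf$. Your added remark that every Turing-ideal $\omega$-model satisfies $\RCA_0$ usefully spells out the ``in particular'' clause for non-$\Pi^1_2$ statements, which the paper leaves implicit.
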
 

Under minor assumptions on our notion of smallness, we can show that the NTM property coincides with our property P$_2$.

\begin{defn}
A \emph{tailclass} is a class~$\Cc \subseteq 2^\omega$ such that if $X \in \Cc$ and $Y =^* X$, then $Y \in \Cc$, where $=^*$ means \qt{equals up to finite changes}.
A notion of smallness is \emph{tail-trivial} if every Borel tailclass is either small or co-small.
\end{defn}

We should note that such 0-1 laws exist for both measure and category. For measure, it is a standard computability-theoretic application of Kolmogorov's 0-1 Law, while for category, this is a classical result of descriptive set theory (often called a topological 0-1 law) as applied to Cantor space. Note that all our applications of tail-triviality in this article will be on particular tailclasses which are closed under Turing equivalence : if $X \in \Cc$ and $X \equiv_T Y$, then $Y \in \Cc$. 

\begin{defn}
A notion of smallness is \emph{permutation invariant} if it is closed under permuting bits.  That is, if $f : \omega \to \omega$ is a permutation and $\mathcal{S}$ is small, then $\setbuildc{X \circ f}{X \in \mathcal{S}}$ is also small.  A notion of smallness is $\emph{computably permutation invariant}$ if it is closed under computable permutations.
\end{defn}

Measure and category both yield permutation invariant notions of smallness.  The main use of permutation invariance is that it lets us rearrange the columns of the sets in a small (co-small) set and retain smallness (co-smallness).  For example, if $\mathcal{S}$ is small for a computably permutation invariant notion of smallness, then $\setbuildc{Y \oplus X}{X \oplus Y \in \mathcal{S}}$ is also small.  Permutation invariant notions of smallness also let us apply property~(\ref{stmt:joinProjection}) of smallness to partitions other than the one given by the usual join operation.

\begin{thm}\label[theorem]{thm:ntm-characterization}
Assume that our notion of smallness is tail-trivial and computably permutation invariant.
A $\Pi^1_2$-problem has the NTM property if and only if it has property \textup{P}$_2$.
\end{thm}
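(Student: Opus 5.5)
We will prove both directions by contradiction, using tail-triviality to turn ``not small'' into ``co-small'' and property~(\ref{stmt:joinProjection}), transported along computable column permutations, as a Fubini-type tool. Throughout, write $X=\bigoplus_{i<\omega}X_i$ and let $\Ac=\setbuildc{X}{\mathcal{I}(\set{X_i}_{i<\omega})\vDash\mathsf{R}}$. Changing finitely many bits of $X$ changes finitely many bits of finitely many columns, so it does not change the generated ideal. Hence $\Ac$ is a tailclass, and it is Borel since $\mathsf{R}$ is $\Pi^1_2$ with arithmetic matrix. By tail-triviality, $\Ac$ is either small or co-small. The same reasoning applies to the Turing-invariant class $\mathcal{G}$ of those $Z$ that compute a $Z$-rarely-solved instance. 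This class is Borel: ``$\setbuildc{W}{Z\oplus W \text{ computes a solution to } \Phi_e^Z}$ is small'' is Borel in $Z$ for the concrete notions considered, and this should be checked, or assumed, at this point. So $\mathcal{G}$ is small or co-small.

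\textbf{P$_2\Rightarrow$ NTM.} Suppose $\Ac$ is not small, so it is co-small and $2^\omega\setminus\Ac$ is small. Using a computable permutation, regard $X$ as $X_0\oplus W$, where $W$ codes the columns $X_1,X_2,\dots$. Then property~(\ref{stmt:joinProjection}) applied to $2^\omega\setminus\Ac$ gives that for a co-small class of $X_0$, the section $\setbuildc{W}{X_0\oplus W\in\Ac}$ is co-small. By P$_2$, a co-small class of $X_0$ compute an instance $C_{X_0}$ that is $X_0$-rarely solved. Pick $X_0$ satisfying both conditions. If $X_0\oplus W\in\Ac$, then some $X_0\oplus X_1\oplus\dots\oplus X_n$ computes a solution to $C_{X_0}$, hence so does $X_0\oplus W$. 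So the co-small section is contained in the small class $\setbuildc{W}{X_0\oplus W \text{ computes a solution to } C_{X_0}}$. This contradicts the fact that $2^\omega$ is not small.

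\textbf{NTM $\Rightarrow$ P$_2$.} Suppose P$_2$ fails, so $\mathcal{G}$ is not co-small. Then $\mathcal{G}$ is small, and the complement $\mathcal{H}$ is co-small. We aim to show $\Ac$ is co-small, contradicting NTM. Fix a computable bijection $\langle\cdot,\cdot\rangle$ assigning the pair $(e,n)$ (``the $e$-th instance computed by $X_0\oplus\dots\oplus X_n$'') to a requirement. The requirement asks that some later column $X_k$ with $k>n$ satisfy that $X_0\oplus\dots\oplus X_n\oplus X_k$ computes a solution of that instance, if it is an instance. It suffices to show that each such requirement holds for a co-small class of $X$. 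Then the countable intersection, intersected with $\setbuildc{X}{X_0\oplus\dots\oplus X_n\in\mathcal{H}\ \forall n}$, is co-small and contained in $\Ac$. That intersected class is co-small because $\mathcal{H}$ is co-small, and each map $X\mapsto X_0\oplus\dots\oplus X_n$ pulls co-small back to co-small via permutation invariance and property~(\ref{stmt:joinProjection}).

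For a fixed requirement $(e,n)$, split $X$ as $P\oplus T$ with $P=X_0\oplus\dots\oplus X_n$ and $T$ the remaining columns. Property~(\ref{stmt:joinProjection}), applied to the complement of the requirement's class, reduces the task to showing that for each $P\in\mathcal{H}$ the section in $T$ is co-small. Since $P\in\mathcal{H}$, the class $\mathcal{W}_P=\setbuildc{W}{P\oplus W \text{ computes a solution to } \Phi_e^P}$ is not small. The section contains $\setbuildc{T}{\exists k\ X_k\in\mathcal{W}_P}$. This class is a Borel tailclass in $T$: modifying finitely many bits affects only finitely many columns, and $\mathcal{W}_P$ is closed under finite modification. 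It is not small, since it contains a copy of $\mathcal{W}_P$ in one column times $2^\omega$; here non-smallness of such cylinder-like products follows from property~(\ref{stmt:joinProjection}) applied to its complement. Hence, by tail-triviality, it is co-small.

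I expect the main obstacle to be the bookkeeping in this second direction. First, one must verify Borelness of $\mathcal{G}$ and of the sections, so that tail-triviality applies. Second, one must repeatedly justify ``Fubini in the easy direction'' purely from property~(\ref{stmt:joinProjection}) and computable permutation invariance. This is the converse-looking statement that products and pullbacks of co-small classes remain co-small, and it must be derived by applying~(\ref{stmt:joinProjection}) to complements rather than assumed.
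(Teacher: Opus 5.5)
\textbf{First direction.} Your proof that P$_2$ implies NTM is correct. It is essentially the paper's argument: you apply axiom~(4) in the right direction, since smallness of $2^\omega\setminus\Ac$ gives smallness of its $\mathcal{B}$-image. The only difference is that you run it on $\Ac$ rather than on the paper's class of those $X\oplus Y$ such that $X$ computes an instance not solved by $X\oplus Y$.

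\textbf{The gap in the second direction.} Two key inferences there are the \emph{converse} of~(4):
\begin{itemize}
\item ``property~(4), applied to the complement of the requirement's class, reduces the task to showing that for each $P\in\mathcal{H}$ the section is co-small'';
\item ``each map $X\mapsto X_0\oplus\dots\oplus X_n$ pulls co-small back to co-small''.
\end{itemize}
In the first you conclude ``$\mathcal{N}$ is small'' from ``$\mathcal{B}(\mathcal{N})$ is small''. In the second you conclude that $\{P\oplus T : P\in\mathcal{S}\}$ is small from $\mathcal{S}$ small.

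Axiom~(4) only transfers smallness from a class to its $\mathcal{B}$-image. It never lets you conclude that a class is small from information about its sections. Applying it to the complement presupposes that the complement is already known to be small.

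This converse does not follow from (1)--(4). The $\sigma$-ideal of countable classes satisfies (1)--(4). Yet $\mathcal{N}=\{0^\omega\oplus Z : Z\in 2^\omega\}$ is not small, while $\mathcal{B}(\mathcal{N})=\{0^\omega\}$ is.

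Tail-triviality can rescue the inference, but only for Borel tailclasses. Your requirement class for a fixed pair $(e,n)$ is not a tailclass: a finite change to $P=X_0\oplus\dots\oplus X_n$ changes the instance $\Phi^P_e$.

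Independently, you get $\mathcal{H}$ co-small only by assuming $\mathcal{G}$ is Borel. That is not a hypothesis of the theorem, and it cannot be verified for an abstract notion of smallness.

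\textbf{How to fix it.} Both defects disappear with two changes.
\begin{itemize}
\item Use the failure of P$_2$ exactly as it reads: $\mathcal{H}$ is \emph{not small}.
\item Bundle all indices $e$ into one requirement $\mathcal{R}_n$: ``every $(X_0\oplus\dots\oplus X_n)$-computable instance is solved by $X_0\oplus\dots\oplus X_n\oplus X_k$ for some $k>n$''. This is a Borel tailclass.
\end{itemize}
Split $X=P\oplus T$ by a computable permutation. Your column argument then shows that every section of $\mathcal{R}_n$ over $P\in\mathcal{H}$ is co-small.

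One small correction in that argument: the cylinder over $\mathcal{W}_P$ is non-small by~(4) applied to the cylinder itself, whose $\mathcal{B}$-image is $\mathcal{W}_P$. It does not come from applying~(4) to its complement.

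The argument then finishes as follows:
\begin{itemize}
\item $\mathcal{B}(\mathcal{R}_n)\supseteq\mathcal{H}$ is not small.
\item So $\mathcal{R}_n$ is not small by~(4).
\item So $\mathcal{R}_n$ is co-small by tail-triviality.
\item So $\bigcap_n\mathcal{R}_n\subseteq\Ac$ is co-small.
\end{itemize}
The intersection with the pullback of $\mathcal{H}$ is no longer needed.

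This is essentially the paper's route. It works with $\mathcal{T}=\{X\oplus Y : \text{every $X$-computable instance has an $(X\oplus Y)$-computable solution}\}$. It handles the product step by padding, using $\mathcal{B}(\{(X\oplus Y)\oplus W : X\oplus Y\in\mathcal{T}\})=\mathcal{T}$. In that way~(4) is only ever used contrapositively, and tail-triviality upgrades ``not small'' to ``co-small''.
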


\begin{proof}
First suppose that $\mathsf{P}$ has property~P$_2$.  Let $\mathcal{S}$ be the class of all $X \oplus Y$ such that $X$ computes a $\mathsf{P}$-instance to which $X \oplus Y$ does not compute a solution.  Notice that $\mathcal{S}$ is a tailclass, and it is Borel because $\mathsf{P}$ is a $\Pi^1_2$-problem.  Property~P$_2$ says that $\mathcal{B}(\mathcal{S}) = \setbuildc{X}{\setbuildc{Y}{X\oplus Y\in\mathcal{S}}\text{ is not small}}$ is co-small.  Therefore $\mathcal{S}$ is not small by property~(\ref{stmt:joinProjection}) of smallness and so is co-small by tail-triviality.  Rearrange columns so that sets of the form $X \oplus Y$ become sets of the form $Z = \bigoplus_{i<\omega}{Z_i}$, where $X$ corresponds to $Z_0$ and $Y$ corresponds to $\bigoplus_{i > 0}{Z_i}$.  The resulting class $\mathcal{R}$ of all $Z = \bigoplus_{i<\omega}{Z_i}$ such that $Z_0$ computes a $\mathsf{P}$-instance to which $Z$ does not compute a solution is co-small by computable invariance.  If $Z = \bigoplus_{i<\omega}{Z_i}$ is in $\mathcal{R}$, then $\mathcal{I}(\set{Z_i}_{i<\omega})\nvDash\mathsf{P}$, so $\mathsf{P}$ has the NTM property.

Now suppose that $\mathsf{P}$ does not have property~P$_2$.  Let $\mathcal{T}$ be the class of all $X \oplus Y$ such that every $X$-computable $\mathsf{P}$-instance has an $(X \oplus Y)$-computable solution.  The failure of property~P$_2$ means that $\mathcal{B}(\mathcal{T}) = \setbuildc{X}{\setbuildc{Y}{X \oplus Y \in \mathcal{T}} \text{ is not small}}$ is not small.  Thus $\mathcal{T}$ is not small by property~(\ref{stmt:joinProjection}) of smallness.  Now let $\mathcal{R}  = \setbuildc{(X \oplus Y) \oplus W}{X \oplus Y \in \mathcal{T}}$.  Clearly $$\mathcal{B}(\mathcal{R}) = \setbuildc{X \oplus Y}{\setbuildc{W}{(X \oplus Y) \oplus W \in \mathcal{R}}\text{ is not small}} = \mathcal{T}$$ is not small, so again $\mathcal{R}$ is not small by property~(\ref{stmt:joinProjection}) of smallness.  The classes~$\mathcal{T}$ and $\mathcal{R}$ are tailclasses, and they are Borel because $\mathsf{P}$ is a $\Pi^1_2$-problem.  Thus they are both co-small by tail-triviality.  The class $\mathcal{R}$ consists of all $(X \oplus Y) \oplus W$ where every  $X$-computable $\mathsf{P}$-instance has an $(X \oplus Y)$-computable solution.  Given $n$, rearrange columns so that sets of the form $(X \oplus Y) \oplus W$ become sets of the form $Z = \bigoplus_{i<\omega}{Z_i}$, where $X$ corresponds to $\bigoplus_{i\le n}{Z_i}$, $Y$ corresponds to $Z_{n+1}$, and $W$ corresponds to $\bigoplus_{i> n+1}{Z_i}$.  Then the class $\mathcal{R}_n$ of all $Z = \bigoplus_{i<\omega}{Z_i}$ where every ($\bigoplus_{i\le n}{Z_i}$)-computable $\mathsf{P}$-instance has a ($\bigoplus_{i\le n+1}{Z_i}$)-computable solution is co-small by computable invariance.  The class $\bigcap_{n < \omega} \mathcal{R}_n$ is therefore co-small, and $\mathcal{I}(\set{Z_i}_{i<\omega}) \vDash \mathsf{P}$ for every $Z$ in the class.  Thus $\mathsf{P}$ does not have the NTM property.
\end{proof}

When showing that property \textup{P}$_2$ implies the NTM property in the first part of the proof of \Cref{thm:ntm-characterization}, the computable invariance assumption is a matter of convenience as it allows us to be agnostic about the encoding of $\bigoplus_{i<\omega}{Z_i}$.  The assumption may be dropped if one commits to the encoding $\bigoplus_{i<\omega}{Z_i} = Z_0 \oplus (\bigoplus_{i > 0}Z_i) = Z_0 \oplus (Z_1 \oplus (Z_2 \oplus \cdots)\cdots)$.  Conversely, the computable invariance assumption seems essential to our method for showing that if property \textup{P}$_2$ fails, then so does the NTM property.

In terms of properties P$_1$, P$_2$, P$_3$, our \Cref{thm:mainTheorem} says that no statement with property P$_3$ can $\omega$-reduce to a statement without property P$_3$.  \Cref{thm:ntm-characterization} implies that \Cref{thm:mainTheorem} also applies to property P$_2$ provided that the problems in question are $\Pi^1_2$ and the notion of smallness is tail-trivial and computably permutation invariant.

\begin{cor}\label[corollary]{cor:pi-not-pi-separation}
Assume that the notion of smallness is tail-trivial and computably permutation invariant.  Let $\Psf$ and $\Qsf$ be $\Pi^1_2$-problems.
If $\Qsf$ has property \textup{P}$_2$ but $\Psf$ does not, then $\Qsf \nleq_\omega \Psf$.
\end{cor}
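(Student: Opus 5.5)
The plan is to combine \Cref{thm:ntm-characterization} with \Cref{obs:NTMreduce}, so almost no new work is needed. The hypotheses of the corollary are exactly those of \Cref{thm:ntm-characterization}: the notion of smallness is tail-trivial and computably permutation invariant, and $\Psf$ and $\Qsf$ are $\Pi^1_2$-problems. We may therefore apply that theorem to each problem separately and pass from property P$_2$ to the NTM property.

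First, since $\Qsf$ has property P$_2$, the forward direction of \Cref{thm:ntm-characterization} shows that $\Qsf$ has the NTM property. Second, since $\Psf$ does not have property P$_2$, the reverse direction, applied in contrapositive form, shows that $\Psf$ does not have the NTM property. This is the direction in which computable permutation invariance is essential. Third, \Cref{obs:NTMreduce} now gives $\Qsf \nleq_\omega \Psf$ directly.

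For completeness, I would recall why the Observation applies. The class $\Ac$ of codes $\bigoplus_i X_i$ with $\mathcal{I}(\set{X_i}_{i<\omega}) \models \Psf$ is not small, while the corresponding class $\Bc$ for $\Qsf$ is small. By property~(\ref{stmt:nonempty}), $\Ac \not\subseteq \Bc$, so some Turing ideal satisfies $\Psf$ but not $\Qsf$.

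I expect no genuine obstacle here; the only care needed is checking that both directions of \Cref{thm:ntm-characterization} are available under the stated hypotheses. Both are: the tailclasses involved are Borel precisely because the problems are $\Pi^1_2$.
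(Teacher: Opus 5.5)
Your proposal is correct and is exactly the paper's argument: Theorem~\ref{thm:ntm-characterization} shows that $\Qsf$ has the NTM property and $\Psf$ does not, and Observation~\ref{obs:NTMreduce} then yields $\Qsf \nleq_\omega \Psf$. Your justification of the Observation also matches the paper's: $\Ac \not\subseteq \Bc$ follows from the axiom that subsets of small classes are small.
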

\begin{proof}
In this situation, Property P$_2$ is equivalent to the NTM property by \Cref{thm:ntm-characterization}.  Thus $\Qsf \nleq_\omega \Psf$ by \Cref{obs:NTMreduce}.
\end{proof}

Again, \Cref{thm:mainTheorem} implies that property P$_3$ is closed upward under $\omega$-reducibility:  if $\Qsf \leq_\omega \Psf$ and $\Qsf$ has property P$_3$, then so does $\Psf$.  Likewise, \Cref{cor:pi-not-pi-separation} implies that property P$_2$ is also closed upward under $\omega$-reducibility, so long as the notion of smallness is tail-trivial and computably permutation invariant and the problems are $\Pi^1_2$.  The situation for property P$_1$ is slightly different because it only concerns computable instances.

\begin{prop}\label[proposition]{prop:p1-not-p1-separation}
If $\mathsf{P}$ does not have property P$_1$, but $\mathsf{Q}$ does, then $\Qsf \not \leq_c \Psf$.
\end{prop}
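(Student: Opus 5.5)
We argue by contraposition: assume $\Qsf \leq_c \Psf$ and that $\Qsf$ has property P$_1$, and deduce that $\Psf$ has property P$_1$. The idea is to push a computable rarely-solved $\Qsf$-instance through the reduction and check that the resulting $\Psf$-instance is computable and still rarely solved.

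First, fix a computable $\Qsf$-instance $C$ witnessing P$_1$. The class
\[
\Cc=\setbuildc{Z}{Z \text{ computes a } \Qsf\text{-solution to } C}
\]
is small. Since $C$ is computable, this is the same class as the one in the definition of rarely solved, because $C\oplus Z\equiv_T Z$.

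Next, apply the definition of $\Qsf\leq_c\Psf$ to the instance $C$. It yields a $C$-computable, hence computable, $\Psf$-instance $\hat C$ with the following property: for every $\Psf$-solution $\hat Y$ to $\hat C$, the set $\hat Y\oplus C\equiv_T \hat Y$ computes a $\Qsf$-solution to $C$.

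Now consider the class
\[
\Dc=\setbuildc{Z}{Z \text{ computes a } \Psf\text{-solution to } \hat C}.
\]
Take $Z\in\Dc$, and let $\hat Y\leT Z$ be a $\Psf$-solution to $\hat C$. Then $Z\geT \hat Y$, and $\hat Y$ computes a $\Qsf$-solution to $C$, so $Z$ does as well. Hence $Z\in\Cc$, which gives $\Dc\subseteq\Cc$. By property~(\ref{stmt:nonempty}) of smallness, $\Dc$ is small. Therefore $\hat C$ is a computable $\Psf$-instance with only a small class of oracles computing a solution, which means $\Psf$ has property P$_1$. This contradicts the hypothesis.

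There is no real obstacle here; the argument uses only the downward closure of smallness. The one point needing care is that computability of $C$ is what makes $\hat C$ computable, and that the join with $C$ in the definition of $\leq_c$ is harmless. This also explains why the statement is phrased with $\leq_c$ rather than $\leq_\omega$: an $\omega$-reduction gives no single computable $\Psf$-instance to which the argument could be applied.
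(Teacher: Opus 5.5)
Your proof is correct and is essentially the paper's argument in contrapositive form. The paper takes an arbitrary $X$-computable (hence computable) $\Psf$-instance $\hat X$ and uses axiom (2) of smallness to get $\Dc \nsubseteq \Cc$, which produces a $\Psf$-solution that does not compute a $\Qsf$-solution. You instead assume the reduction exists, derive $\Dc \subseteq \Cc$, and use the same axiom to conclude $\Dc$ is small; both arguments rest on the computability of the $\Qsf$-instance and the downward closure of smallness.
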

\begin{proof}
Let $X$ be the computable $\Qsf$-instance such that the class $\Cc$ of sets computing a $\Qsf$-solution is small.
Let $\hat X$ be an $X$-computable $\Psf$-instance. In particular, $\hat X$ is computable.
Since $\Psf$ does not have property P${}_1$, the class~$\Dc$ of sets computing a $\Psf$-solution to~$\hat X$ is not small. By axiom (2) of smallness, $\Dc \nsubseteq \Cc$, so there is a $\Psf$-solution~$\hat Y$ to $\hat X$ such that $\hat Y$ does not compute a $\Qsf$-solution to~$X$. In particular, $\hat Y \oplus X$ does not compute a $\Qsf$-solution to~$X$, so $\Qsf \not \leq_c \Psf$.
\end{proof}

Many principles studied in reverse mathematics have a computability-theoretic counterpart, using the notion of \emph{bounding degree}.

\begin{defn}
Let $\Psf$ be a problem. A set~$A$ is \emph{$\Psf$-bounding} over~$B$ (written $A \gg_\Psf B$) if 
for every $B$-computable $\Psf$-instance~$X$, there is an $A \oplus B$-computable $\Psf$-solution~$Y$.
\end{defn}

When $B = \emptyset$, we simply say $A$ is \emph{$\Psf$-bounding}, written $A \gg_\Psf \emptyset$.

\begin{thm}\label[theorem]{thm:p1-degrees-bounding}
Assume that our notion of smallness is tail-trivial.
A $\Pi^1_2$-problem $\Psf$ has property \textup{P}${}_1$ if and only if the class of $A$'s that are  $\Psf$-bounding is small.
\end{thm}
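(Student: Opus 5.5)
Both directions can be organized around the class $\Ec$ of $\Psf$-bounding sets,
\[
\Ec = \setbuildc{A}{\text{every computable $\Psf$-instance has an $A$-computable $\Psf$-solution}}
= \bigcap_{X \text{ computable } \Psf\text{-instance}} \Cc_X,
\]
where $\Cc_X = \setbuildc{Z}{Z \text{ computes a } \Psf\text{-solution to } X}$. Since $X$ is computable, $X \oplus Z \equiv_T Z$, so $\Cc_X$ is exactly the class appearing in the definition of a rarely-solved instance. Property P$_1$ therefore says that some $\Cc_X$ with $X$ computable is small.

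\emph{Forward direction (P$_1$ implies $\Ec$ small).} Suppose some computable instance $X$ has $\Cc_X$ small. Then $\Ec \subseteq \Cc_X$, and $\Ec$ is small by property~(\ref{stmt:nonempty}). This direction does not use tail-triviality.

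\emph{Converse direction ($\Ec$ small implies P$_1$).} Suppose P$_1$ fails, so every $\Cc_X$ with $X$ computable is not small. I will use tail-triviality to upgrade ``not small'' to ``co-small'', and then $\sigma$-additivity. Each $\Cc_X$ is closed under finite modifications: if $Z \in \Cc_X$ and $Z' =^* Z$, then $Z' \equiv_T Z$, so $Z'$ computes the same solutions. Moreover, $\Cc_X$ is Borel. Indeed,
\[
Z \in \Cc_X \iff (\exists e)\bigl[\Phi_e^Z \text{ is total} \wedge \psi(X, \Phi_e^Z)\bigr],
\]
where $\psi$ is the arithmetic formula defining $\Psf$-solutions; since $X$ is computable, this is an arithmetic condition on $Z$. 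So by tail-triviality each $\Cc_X$ is co-small.

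There are only countably many computable $\Psf$-instances $X$. The complement of $\Ec$ is $\bigcup_X (2^\omega \setminus \Cc_X)$, a countable union of small classes, hence small by property~(\ref{stmt:union}). So $\Ec$ is co-small. It cannot also be small, since then $2^\omega = \Ec \cup (2^\omega \setminus \Ec)$ would be small, contradicting property~(\ref{stmt:nontrivial}). By contraposition, if $\Ec$ is small then P$_1$ holds.

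The only step needing any care is checking that each $\Cc_X$ is a Borel tailclass, so that tail-triviality applies. This is exactly where the $\Pi^1_2$ hypothesis is used: the formula $\psi$ is arithmetic, so the condition ``$\Phi_e^Z$ is total and $\psi(X, \Phi_e^Z)$'' is arithmetic in $Z$.
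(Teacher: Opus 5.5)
Your proof is correct and is essentially the paper's argument: the paper proves the converse directly, using countable unions to find a computable instance whose non-solving class is not small and then applying tail-triviality to that class. You run the same argument contrapositively, applying tail-triviality to the solving classes $\Cc_X$, and you justify the Borel tailclass property more explicitly.
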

\begin{proof}
Suppose first $\Psf$ has property P${}_1$, and let $X$ be the computable $\Psf$-instance witnessing it, i.e., only a small class of $A$'s compute a solution to~$X$. A fortiori, only a small class of $A$'s can compute a solution to all computable $\Psf$-instances, i.e., only a small class of $A$'s can be $\Psf$-bounding. 

Suppose now that the class of $\Psf$-bounding sets is small. Fix a countable enumeration $(X_e)_{e \in \omega}$ of all computable $\Psf$-instances. For every~$e \in \omega$, let $\Cc_e$ be the class of all sets~$A$ which do not compute $\Psf$-solutions to~$X_e$. The fact that the class of $\Psf$-bounding sets is small means that the class $\bigcup_{e \in \omega} \Cc_e$ is co-small. Since smallness is closed under arbitrary countable unions, there is some~$e \in \omega$ such that $\Cc_e$ is not small. Note that $\Cc_e$ is a tailclass. Moreover, $\Cc_e$ is Borel since $\Psf$ is a $\Pi^1_2$-problem, so $\Cc_e$ is co-small. In other words, the $\Psf$-instance $X_e$ witnesses that $\Psf$ has property P${}_1$.
\end{proof}

\section{Applications to Randomness}\label{sec:RANapplications}

The goal of this section is to classify the reverse mathematics zoo in terms of which problems have property P${}_i$ for measure, for $i \in \{1,2,3\}$. Here by `measure' we mean the Lebesgue measure (a.k.a uniform measure) on $2^\omega$ and we denote it by~$\mu$. It can be defined as being the unique Borel probability measure such that for every $\sigma \in 2^{<\omega}$, $\mu \{X : X \upharpoonright |\sigma| = \sigma\} = 2^{-|\sigma|}$. 

As we shall see, most of the statements have property P${}_1$ (and therefore properties P${}_2$ and P${}_3$ by \Cref{prop:pi-linearly-ordered}). By \Cref{prop:p1-not-p1-separation}, it suffices to consider the weakest principles in the zoo having property P${}_1$ and computable-reducibility. All the known principles from the zoo are known to either follow from randomness, in which case they do not even have property P${}_3$, or are above one of the following principles for computable reducibility:
\begin{itemize}
    \item A Ramsey-type graph coloring principle called $\RCOLOR_2$ (\Cref{sec:ran-rcolor2})
    \item The Atomic Model Theorem, $\AMT$ (\Cref{sec:ran-amt})
    \item Cohesiveness principles, $\COH$ and $\CADS$ (\Cref{sec:ran-coh})
    \item The positive measure domination principle, $\POS$ (\Cref{sec:ran-pos})
\end{itemize}
In the remainder of this section, we shall prove that each of the principles above has property P${}_1$.

\subsection{Ramsey-type graph coloring principles}\label[section]{sec:ran-rcolor2}

Flood~\cite{floodReverseMathematicsRamseytype2012} introduced the \emph{Ramsey-type weak K\"onig's lemma} ($\RWKL$), a restriction of weak K\"onig's lemma where a solution consists of infinitely many bits of information about a path. The approach was then systematized by Bienvenu, Patey and Shafer~\cite{bienvenuLogicalStrengthsPartial2017} by considering Ramsey-type versions of multiple principles related to weak K\"onig's lemma. The weakest of these statements is a Ramsey-type version of a coloring principle about graphs:

\begin{defn}
Let $k \in \omega$ and $G = (V, E)$ be a graph.
\begin{itemize}
    \item A \emph{$k$-coloring} is a function $f : V \to k$ such that for every $x, y \in V$, if $(x, y) \in E$ then $f(x) \neq f(y)$.
    \item $G$ is \emph{$k$-colorable} if it has a $k$-coloring, and $G$ is \emph{locally $k$-colorable} if every finite subgraph is $k$-colorable.
    \item A set $H \subseteq V$ is \emph{$k$-homogeneous} for $G$ if every finite $V_0 \subseteq V$ induces a subgraph that is $k$-colorable by a coloring that colors every vertex in~$V_0 \cap H$ color~0.
\end{itemize}
\end{defn}

Let $\COLOR_k$ be the statement \qt{every locally $k$-colorable graph is $k$-colorable}. Hirst~\cite{hirstReverseMathematicsMarriage2015} proved that $\COLOR_k$ is equivalent to $\WKL_0$ over~$\RCA_0$ for every $k \geq 2$. The following statement is the Ramsey-type version of $\COLOR_k$: 

\begin{stmt}
$\RCOLOR_k$ is the statement \qt{for every infinite, locally $k$-colorable graph $G = (V, E)$, there is an infinite set $H \subseteq V$ that is $k$-homogeneous for~$G$.}
\end{stmt}

Bienvenu, Patey and Shafer~\cite{bienvenuLogicalStrengthsPartial2017} proved that $\RCOLOR_k$ is equivalent to $\RWKL$ over~$\RCA_0$ for every $k \geq 3$. On the other hand, it is still unknown whether $\RCOLOR_2$ is strictly weaker.
They proved~\cite[Theorem 6.10]{bienvenuLogicalStrengthsPartial2017} in particular that $\RCOLOR_2$ has the P${}_1$ property for measure:

\begin{thm}[Bienvenu, Patey and Shafer~\cite{bienvenuLogicalStrengthsPartial2017}]
There is a computable bipartite graph $G = (\omega, E)$ such that the measure of the set of  oracles that enumerate homogeneous sets for $G$ is 0.
\end{thm}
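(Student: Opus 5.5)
The plan is to build a computable bipartite graph $G$ on $\omega$ by a finite-injury, measure-risking construction. For each Turing functional $\Phi_e$, with the oracle ranging over $2^\omega$, the requirement $R_e$ will force the class of oracles $Z$ for which $W^Z_e$ is an infinite $2$-homogeneous set for $G$ to have measure at most $2^{-k}$ for every $k$, hence measure zero. Taking the countable union over $e$ then gives the theorem. Enumeration rather than computation is handled by working with $W^Z_e$ throughout; this costs nothing, since the strategy only looks at finite enumerated subsets.

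The key combinatorial point concerns a bipartite graph, where $2$-homogeneity of a finite set $V_0\cap H$ means that all its vertices can receive color $0$ in a proper $2$-coloring. If two vertices are joined by a path of odd length, they must get different colors. So a homogeneous set can never contain two vertices at odd distance within one connected component. The graph will be built as a disjoint union of paths, which keeps it bipartite and locally $2$-colorable. To defeat $R_{e,k}$, I will reserve fresh isolated vertices in blocks. I will wait until a set of oracles of measure greater than $2^{-k}$ has enumerated at least two elements of a large block $B$ of size $n$. Then I will add edges joining the vertices of $B$ into a single path whose parities are chosen adversarially: any two vertices enumerated by the same oracle should preferably lie at odd distance.

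A single path gives only a $2$-partition (even versus odd positions), so each oracle just needs both its elements in the same class. To handle this I will use a probabilistic argument. Choose the parity assignment of $B$ at random; any fixed oracle's pair then lands in the same class with probability $1/2$. Use many pairs, or many blocks, so that the measure of oracles surviving all blocks decays geometrically. Since the construction is computable, the random choice will be replaced by an averaging argument: some assignment kills at least half the measure, and such an assignment can be found computably once the relevant finite measure approximations stabilize. More precisely, I will act at stage $s$ on the finite, clopen approximations, so that acting is computable. I then iterate $k$ times on disjoint fresh blocks, which is possible because the graph stays infinite and the vertices in each block are fresh. Each iteration halves the measure of oracles still able to be homogeneous and infinite, because such oracles must keep enumerating elements in later blocks.

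The main obstacle is that the oracles are not forced to put their two elements in the same block. An oracle can enumerate elements from different components, and vertices in different path components impose no constraint at all. So I must make the components absorb everything: I will connect the waiting block to all previously enumerated material. This requires a priority arrangement in which later requirements merge components only through fresh vertices, respecting earlier parity decisions, and a measure-counting argument showing that the failures across all $e,k$ sum to at most the target bound. I expect the counting that makes the surviving measure go below $2^{-k}$, while allowing only finitely many actions per requirement, to be the delicate part. I would follow the strategy of \cite[Theorem 6.10]{bienvenuLogicalStrengthsPartial2017}, choosing block sizes growing with $k$ so that Chernoff-type concentration bounds the survivors.
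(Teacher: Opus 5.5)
The paper gives no argument here; it only quotes \cite[Theorem 6.10]{bienvenuLogicalStrengthsPartial2017}. Judged on its own, your sketch has a genuine gap at exactly the point you call ``the delicate part'', and several of the mechanisms you commit to would fail.

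First, in any witness graph every connected component must be finite. If $C$ is an infinite component of a computable bipartite graph, fix $c\in C$. The vertices joined to $c$ by even paths, and those joined by odd paths, form two c.e.\ sets. One of them is infinite, and an infinite computable subset of it is homogeneous, so every oracle computes a solution. Consequently an infinite homogeneous $W^Z_e$ can avoid any fixed finite block $B$, together with all components meeting it.

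This breaks your plan in three places:
\begin{itemize}
\item Your trigger, ``measure $>2^{-k}$ of oracles have enumerated two elements of $B$'', may never fire even when the bad class is large. You give no argument that a requirement waiting forever is satisfied.
\item Your counting, ``iterate $k$ times, each halving the measure of oracles still able to be homogeneous'', is wrong. An action only affects oracles that have already committed two elements by that stage. Since the bad class is $\Pi^0_2$, you cannot recognize at a finite stage that most of it has committed.
\item ``Connect the waiting block to all previously enumerated material'', done by every requirement, builds an infinite component, which by the remark above destroys the theorem. You need an explicit device guaranteeing that each component is merged only finitely often.
\end{itemize}

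The missing idea is to phrase the trigger in terms of current components rather than a pre-chosen block. Give $R_{e,k}$ a threshold $a$ (say its index), and let it merge only components lying entirely above $a$. Then a component with least element $m$ can be touched only by the finitely many requirements with threshold $\le m$, each acting finitely often, so all components stay finite.

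Call $X$ \emph{ready} at stage $s$ if no two elements of $W^X_{e,s}$ lie at odd distance in a current component, but two of them lie in distinct current components entirely above $a$. This is a clopen condition, so $\mu(\mathrm{ready}_s)$ is computable exactly. When $\mu(\mathrm{ready}_s)>2^{-k}$, merge the finitely many relevant components through fresh vertices, choosing orientations so that at least half of $\mathrm{ready}_s$ acquires an odd pair. A uniformly random orientation does this for each ready $X$ with probability exactly $1/2$, so exhaustive search finds a suitable choice.

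Odd pairs are permanent, so each action kills more than $2^{-k-1}$ of new measure, and there are at most $2^{k+1}$ actions. Now suppose $W^X_e$ is infinite and homogeneous in the final graph. Finiteness of components gives two of its elements in distinct final components above $a$, hence in distinct current components, so $X\in\mathrm{ready}_s$ for all large $s$. Fatou's lemma then gives
\[
\mu\{X : W^X_e \text{ infinite homogeneous}\}\le\liminf_s\mu(\mathrm{ready}_s)\le 2^{-k}.
\]
Pairs suffice, so no large blocks or Chernoff bounds are needed.
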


\subsection{Atomic model theorem}\label[section]{sec:ran-amt}

The atomic model theorem is a classical statement of model theory, stating that a theory is atomic if and only if it admits an atomic model.
In what follows, every theory~$T$ is \emph{deductively closed} (for every formula~$\varphi$ such that $T \vdash \varphi$, $\varphi \in T$), \emph{complete} ($T \vdash \varphi$ or $T \vdash \neg \varphi$ for every sentence $\varphi$) and \emph{consistent} (there is no formula $\varphi$ such that $T \vdash \varphi$ and $T \vdash \neg \varphi$).

\begin{defn}Let $T$ be a theory.
\begin{itemize}
    \item A formula $\varphi(x_1, \dots, x_n)$ is an \emph{atom} of~$T$ if for each formula $\psi(x_1, \dots, x_n)$, either $T \vdash \varphi \to \psi$ or $T \vdash \varphi \to \neg \psi$, but not both. 
    \item The theory~$T$ is \emph{atomic} if for every formula $\psi(x_1, \dots, x_n)$ consistent with $T$, there is an atom $\varphi(x_1, \dots, x_n)$ of~$T$ such that $T \vdash \varphi \to \psi$.
    \item A model~$\Mc$ of~$T$ is \emph{atomic} if every $n$-tuple from~$\Mc$ satisfies an atom of~$T$.
\end{itemize}
\end{defn}

\begin{stmt}
$\AMT$ is the statement \qt{If a theory is atomic, then it admits an atomic model.}
\end{stmt}

The atomic model theorem was studied from a reverse-mathematical viewpoint by Hirschfeldt, Shore and Slaman~\cite{hirschfeldtAtomicModelTheorem2009}, who proved that it follows from the ascending/descending sequence principle restricted to linear orders of type $\omega+\omega^*$ ($\SADS$).
Conidis~\cite{conidisClassifyingModeltheoreticProperties2008} studied degrees bounding various model-theoretic properties, and proved in particular the following theorem. We say that a function $f : \omega \to \omega$ \emph{dominates} $g : \omega \to \omega$ (written $f \geq g$) if $f(x) \geq g(x)$ for all but finitely many~$x \in \omega$.

\begin{thm}[{Conidis~\cite[Theorem 3.9]{conidisClassifyingModeltheoreticProperties2008}}]\label[theorem]{thm:amt-escape}
For every $\Delta^0_2$ function $f : \omega \to \omega$, there is a computable atomic theory~$T$
such that every atomic model~$\Mc$ of~$T$ computes a function $g : \omega \to \omega$ not dominated by~$f$.
\end{thm}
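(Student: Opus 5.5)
Since this is a theorem quoted from Conidis, the plan is to re-derive it by a direct construction of a computable theory. We do not route it through the smallness framework. Fix a computable approximation $(f_s)_{s \in \omega}$ with $f(x) = \lim_s f_s(x)$. The target is the following. For each $x$, every atomic model $\Mc$ should contain, computably from its diagram, a witness that yields a stage $s$ with $f_s(x) = f(x)$. Equivalently it should yield some $s$ past the last change, in which case $g(x) = \max_{t \le s} f_t(x) \ge f(x)$. This would even give $g$ dominating $f$ everywhere; the theorem only needs $g(x) \ge f(x)$ for infinitely many $x$, which leaves room to weaken the construction if needed.

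\textbf{Language and basic axioms.} Use a relational language with unary predicates $C_x$ and $Q_{x,s}$, plus auxiliary unary predicates $P_{x,s,t}$ for $s < t$.
\begin{itemize}
\item The axioms make the $C_x$ disjoint and infinite.
\item For $a \in C_x$, the $Q_{x,s}(a)$ behave like a \qt{counter}. Set $\varphi_{x,s}(v) := C_x(v) \wedge \bigwedge_{t<s} Q_{x,t}(v) \wedge \neg Q_{x,s}(v)$. Each $\varphi_{x,s}$ is consistent, with infinitely many realizations.
\item The type $\set{C_x(v)} \cup \set{Q_{x,t}(v) : t \in \omega}$ is non-principal.
\end{itemize}

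\textbf{Encoding changes of the approximation.} Whenever $f_t(x) \ne f_{t-1}(x)$, the theory declares, for each $s < t$, that $P_{x,s,t}$ splits $\varphi_{x,s}$ into two consistent, independent halves. Otherwise the theory decides $P_{x,s,t}$ outright on $\varphi_{x,s}$.

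Because $f(x)$ changes only finitely often, each $\varphi_{x,s}$ is split only finitely many times. Hence every consistent formula lies below a finite conjunction of $\varphi_{x,s}$ with $P$-literals, and that conjunction is an atom, so $T$ is atomic. By quantifier elimination for such unary theories (each Boolean combination is decided by counting which splittings have occurred by stage $t$), $T$ is decidable, and hence computable after deductive closure.

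\textbf{Main obstacle: getting the computation out of $\Mc$.} An element $a \in C_x$ realizing an atom $\varphi_{x,s} \wedge \bigwedge_j \pm P_{x,s,t_j}$ shows that the splitting stages relevant to $s$ are among the finitely many $t_j$. However, $\Mc$ does not tell us which finite conjunction is the atom. As described, the construction lets $a$ lie in $\varphi_{x,s}$ with $s$ far below the last change, so reading off $s$ alone gives nothing.

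The fix I would try first makes the splitting unbounded rather than finite. For $s$ with $f_s(x) \ne f(x)$, keep splitting $\varphi_{x,s}$ forever, via dummy splittings interleaved at every later stage until a change \qt{certifies} $s$ as stale. Atomicity then forces every element of $C_x$ in $\Mc$ to satisfy some $\varphi_{x,s}$ only for $s$ beyond the final change.

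Two points must then be checked carefully:
\begin{itemize}
\item atomicity of $T$ survives this modification, since only finitely many $s$ are stale for each $x$, each of which gets a non-principal cone that must be shown to have atoms dense in it;
\item the resulting $T$ remains decidable.
\end{itemize}

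Given this, the procedure is as follows. Compute from $\Mc$ any $a \in C_x$, search for the least $s$ with $\neg Q_{x,s}(a)$, and output $g(x) = \max_{t \le s} f_t(x)$. This $g$ is $\Mc$-computable and satisfies $g(x) \ge f(x)$ for all $x$, so $g$ is not dominated by $f$, after adding $1$ if strictness is wanted.
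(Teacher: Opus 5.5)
\textbf{There is a genuine gap.} The step that fails is the claim that atomicity forces every element of $C_x$ into $\varphi_{x,s}$ only for $s$ past the last change.

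Since $T$ is complete, every formula consistent with $T$ is realized in \emph{every} model of $T$, because $T\vdash\exists v\,\varphi_{x,s}(v)$. You require each $\varphi_{x,s}$ to be consistent. So every model of $T$, atomic or not, contains elements of $C_x$ whose counter stops at $s$, for every $s$, stale ones included. Your decoder (take any $a\in C_x$, return the least $s$ with $\neg Q_{x,s}(a)$) can therefore return $s=0$. Atomicity only restricts which complete types are realized; it never makes a consistent formula empty.

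Your two ``points to check'' also contradict your conclusion. If a stale cone has no atom below it, $T$ is not atomic. If atoms are dense in it, those atoms are realized in $\Mc$, inside the stale $\varphi_{x,s}$. Read literally, splitting ``until a change certifies $s$ as stale'' splits forever exactly the cones that are never certified, namely all $s$ past the last change. That makes $T$ non-atomic outright. Finally, declaring the stale $\varphi_{x,s}$ inconsistent would let you compute from $T$ some $s$ with $f_s(x)=f(x)$, so $f$ would be computable.

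More fundamentally, no construction of this shape can work, because the target is too strong. Your decoder would compute from $\Mc$, uniformly in $x$, a stage past the last change, and hence would compute $f$ itself. Take $f=\chi_{\emptyset'}$ with its standard approximation. Then every atomic model of $T$ would compute $\emptyset'$, so the oracles computing an atomic model of $T$ would lie inside the meager cone above $\emptyset'$. This contradicts the fact, noted in the section on genericity, that $\AMT\equiv_c\PIOOG$ does not have property P$_3$ for genericity: every weakly $2$-generic meets every uniformly $\Pi^0_1$ family of dense sets. The same argument with $f$ the settling-time function of $\emptyset'$ rules out even almost-everywhere domination.

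So the ``infinitely often'' in Conidis's conclusion is essential, not slack. The information has to be carried by the complete principal types that $\Mc$ realizes, not by the counter of an arbitrary element. A route that works:
\begin{itemize}
\item Let $m(x)$ be the settling time of $f_s(x)$. Define $D_n=\{\sigma:\exists x\in[n,|\sigma|)\ \sigma(x)\ge m(x)\}$ on strings over $\omega$, coded in binary.
\item Each $D_n$ is dense, and the family is uniformly $\Pi^0_1$, since $\sigma(x)\ge m(x)$ says $\forall t\ge\sigma(x)\ f_t(x)=f_{\sigma(x)}(x)$.
\item Any $G$ meeting all $D_n$ gives $g(x)=1+\max_{t\le G(x)}f_t(x)$, and $g(x)>f(x)$ for infinitely many $x$.
\item It remains to produce a computable atomic theory all of whose atomic models compute such a $G$. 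This is the direction $\PIOOG\le_c\AMT$ of the Hirschfeldt--Shore--Slaman/Conidis equivalence, or a direct isolated-path construction in Conidis's style.
\end{itemize}
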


Note that the original formulation is slightly different, as Conidis introduced the \emph{isolated path property} corresponding to degrees bounding the Atomic Model Theorem, and the \emph{escape property} corresponding to degrees relative to which $\emptyset'$ is non-high.

Dobrinen and Simpson~\cite{dobrinenAlmostEverywhereDomination2004} introduced the following notions of almost-everywhere domination.

\begin{defn}\label[definition]{def:aed}\ 
\begin{itemize}
    \item A set~$A$ is \emph{almost everywhere dominating} (a.e.d) if
    $$
    \mu \{ X : \forall g \leq_T X ~ \exists f \leq_T A, f \geq g\} = 1
    $$
    \item A set~$A$ is \emph{uniformly almost everywhere dominating} (u.a.e.d) if there is a function $f \leq_T A$ such that
    $$
    \mu \{ X : \forall g \leq_T X, f \geq g\} = 1
    $$
\end{itemize}
\end{defn}

Building up on the work of Binns et al~\cite{binns2006conjecture} and Kjos-Hanssen~\cite{Kjos-Hanssen2007low}, Kjos-Hanssen, Miller and Solomon~\cite{Kjos-Hanssen2012lowness} proved that the two notions coincide, answering positively a conjecture of Dobrinen and Simpson~\cite{dobrinenAlmostEverywhereDomination2004}.
Formulated in this framework, Kurtz~\cite{kurtzRandomnessGenericityDegrees1982} proved that $\emptyset'$ is uniformly almost everywhere dominating, from which we deduce the following theorem:

\begin{thm}
$\AMT$ has property \textup{P}${}_1$ for measure.
\end{thm}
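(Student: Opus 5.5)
We will combine Kurtz's theorem that $\emptyset'$ is uniformly almost everywhere dominating with Conidis's \Cref{thm:amt-escape}. The goal is a single computable $\AMT$-instance $T$ such that the class of oracles computing an atomic model of $T$ has measure~$0$.

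First, by Kurtz's result (in the form of \Cref{def:aed}), fix a function $f \leq_T \emptyset'$ such that
\[
\mu\{X : \forall g \leq_T X,\ f \geq g\} = 1 .
\]
Since $f \leq_T \emptyset'$, it is $\Delta^0_2$. By \Cref{thm:amt-escape}, fix a computable atomic theory $T$ such that every atomic model $\Mc$ of $T$ computes a function $g$ not dominated by $f$. Then $T$ is a computable $\AMT$-instance. Note that $T$ is genuinely atomic, so it is a legitimate instance.

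Now let $\Cc$ be the class of $X$ that compute an atomic model of $T$. If $X \in \Cc$, then $X$ computes some $\Mc$, and $\Mc$ computes some $g$ with $f \not\geq g$. By transitivity $g \leq_T X$, so $X$ lies outside the measure-one class above. Hence $\Cc$ is contained in a null class, so $\Cc$ is small, and $T$ witnesses property \textup{P}${}_1$.

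The one point needing care is that $X$ computes a solution to $T$ rather than merely being one. This is harmless because $T$ is computable, so $X \oplus T \equiv_T X$. Also, Conidis's escape statement must hold for every atomic model, not just some; the formulation recalled in \Cref{thm:amt-escape} gives exactly this. Otherwise the argument is a direct composition of the two cited results, and I expect no real obstacle.
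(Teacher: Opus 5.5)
Your proposal is correct and follows essentially the same route as the paper: fix a $\Delta^0_2$ function $f$ witnessing that $\emptyset'$ is uniformly almost everywhere dominating (Kurtz), apply Conidis's escape theorem to get a computable atomic theory $T$ all of whose atomic models compute a function not dominated by $f$, and conclude that the oracles computing an atomic model of $T$ form a null class. You spell out the containment in the complement of the measure-one domination class, and the remark that $T$ is computable, a bit more explicitly than the paper does.
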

\begin{proof}
Let $f : \omega \to \omega$ be a $\Delta^0_2$ function witnessing that $\emptyset'$ is uniformly almost everywhere dominating.
By \Cref{thm:amt-escape}, there is a computable atomic theory~$T$
such that every atomic model~$\Mc$ of~$T$ computes a function $g : \omega \to \omega$ not dominated by~$f$.
Then, the measure of oracles computing an atomic model of~$T$ is~0.
\end{proof}


\subsection{Cohesiveness principles}\label[section]{sec:ran-coh}

Cohesiveness is closely related to the notion of \emph{maximal set}, which was introduced by Friedberg~\cite{friedberg1958three} in the search for a c.e.\ set of incomplete degree.

\begin{defn}
An infinite set $C$ is \emph{cohesive} for a sequence of sets $R_0, R_1, \dots$ if for every~$n \in \omega$, either $C \subseteq^* R_n$, or $C \subseteq^* \overline{R}_n$. Here, $\subseteq^*$ denotes inclusion up to finitely many elements.
\end{defn}

Jockusch and Stephan~\cite{jockusch1993cohesive} studied cohesive sets for the sequences of c.e., computable and primitive recursive sets. Cholak, Jockusch and Slaman~\cite{cholakStrengthRamseyTheorem2001} introduced the cohesiveness principle as a statement in reverse mathematics in order to divide the study of Ramsey's theorem for pairs into its stable and cohesive parts.

\begin{stmt}
$\COH$ is the statement \qt{Every countable sequence of sets admits an infinite cohesive set.}
\end{stmt}

Jockusch and Stephan~\cite{jockusch1993cohesive} proved that the sequence of primitive recursive sets forms a universal instance of~$\COH$ and characterized the degrees of its cohesive sets:

\begin{thm}[Jockusch and Stephan~\cite{jockusch1993cohesive}]\label[theorem]{thm:coh-degrees}\ 
\begin{itemize}
    \item For every infinite cohesive set~$C$ for the sequence of primitive recursive sets, $C'$ is of PA degree over~$\emptyset'$.
    \item Let $\vec{R} = R_0, R_1, \dots$ be a uniformly computable sequence of sets. Every degree whose jump is of PA degree over~$\emptyset'$ computes an infinite cohesive set for~$\vec{R}$.
\end{itemize}
\end{thm}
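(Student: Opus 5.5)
The statement collects two results, so I would prove the two items separately. The first is a coding argument through cohesiveness and the limit lemma. The second combines a $\Pi^{0,\emptyset'}_1$ class with the limit lemma relative to the given degree.

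\emph{First item.} Let $C$ be cohesive for the primitive recursive sets. First I would show that the ``side'' of $C$ on each set is computable from $C'$, uniformly in an index. Given a uniformly primitive recursive family $(R_e)_{e}$, cohesiveness says that for each $e$ there are $n$ and $b \in \{0,1\}$ with $C \setminus [0,n) \subseteq R_e^b$, where $R_e^1 = R_e$ and $R_e^0 = \overline{R}_e$. For fixed $n, b$ this condition is $\Pi^{0,C}_1$, so $C'$ can decide it. Hence $C'$ can search through pairs $(n,b)$ until it finds one satisfying the condition. The search always terminates, and any $b$ it finds is the correct side, since $C$ is infinite. Call this bit $b_e$.

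Next I would choose the family to code $\emptyset'$-computations. Let
\[
R_e = \{ s : \Phi_e^{\emptyset'_s}(e)[s]\downarrow = 1 \},
\]
where the computation uses the stage-$s$ approximation of $\emptyset'$ and is time-bounded by $s$. With this bounding, the sets $R_e$ are uniformly primitive recursive. If $\Phi_e^{\emptyset'}(e)\downarrow = i \in \{0,1\}$, then the approximations eventually settle. So $R_e$ is cofinite if $i=1$ and finite if $i=0$, which forces $b_e = i$. The function $e \mapsto 1-b_e$ is then a $\{0,1\}$-valued function, computable in $C'$, that is diagonally noncomputable relative to $\emptyset'$. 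A degree computing such a function is PA over $\emptyset'$, which proves the first item. The only delicate point is making the family genuinely primitive recursive, and the time bound $s$ handles it.

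\emph{Second item.} Let $\deg(D)$ have $D'$ of PA degree over $\emptyset'$. Consider the class $\mathcal{P}$ of $b \in 2^\omega$ such that $R_0^{b(0)} \cap \dots \cap R_n^{b(n)}$ is infinite for every $n$. Infiniteness of a computable set is $\Pi^0_2$, so $\mathcal{P}$ is a $\Pi^{0,\emptyset'}_1$ class. It is nonempty, because at each step one of the two halves of an infinite set is infinite. Hence $D'$ computes some $b \in \mathcal{P}$. By the limit lemma relative to $D$, there is a $D$-computable approximation $b_s \to b$.

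\emph{Main obstacle.} The hard step is building $C \leT D$ from the approximation. A naive search can hang, because a wrong guess $b_s$ may give a finite intersection. So at step $n$ I would search simultaneously over $s' \ge n$ and $x > c_{n-1}$ for a pair with
\[
x \in R_0^{b_{s'}(0)} \cap \dots \cap R_n^{b_{s'}(n)},
\]
and set $c_n = x$. Once $s'$ is large enough that $b_{s'}\uh(n+1) = b\uh(n+1)$, the corresponding intersection is infinite. So the search terminates and $C = \{c_n\}$ is infinite and $D$-computable.

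For cohesiveness, fix $k$ and a stage $s_0$ after which $b_s \uh (k+1)$ is constant. Every $n \ge \max(k, s_0)$ uses some $s' \ge n \ge s_0$, so $c_n \in R_k^{b(k)}$. Hence $C \subseteq^* R_k^{b(k)}$ for every $k$, so $C$ is cohesive for $\vec{R}$.
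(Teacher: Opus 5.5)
The paper gives no proof of this theorem (it is cited from Jockusch and Stephan), and your proof of both items is correct and is essentially the standard argument for this result. In the first item, cohesiveness lets $C'$ compute the limit along $C$ of a uniformly primitive recursive approximation to $\Phi^{\emptyset'}_e(e)$, which yields a $\{0,1\}$-valued $\emptyset'$-DNC function. In the second item, $D'$ selects a path through the nonempty $\Pi^{0,\emptyset'}_1$ class of branch choices with infinite intersections, and $D$ unwinds it via the limit lemma; your restriction $s' \ge n$ ensures that all but finitely many elements of $C$ follow the limiting choices.
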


The relation between PA degrees and measure was extensively studied. In particular, Ku\v{c}era~\cite{kuceraMeasurePi0_11985} proved the following theorem:

\begin{thm}[Ku\v{c}era~\cite{kuceraMeasurePi0_11985}]\label[theorem]{thm:pa-measure}
Fix a set~$Z$. Then
$$
\mu \{ X : X \oplus Z \mbox{ is of PA degree over } Z \} = 0
$$
\end{thm}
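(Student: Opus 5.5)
The plan is to prove Kučera's theorem by combining Lebesgue density with the standard fact that PA degrees over $Z$ compute complete extensions of $Z$-computable consistent theories, so that PA-ness can be diagonalized against on a set of positive measure. Relativize everything to $Z$; it suffices to show that for each Turing functional $\Phi$, the class
\[
\mathcal{A}_\Phi = \{X : \Phi^{X\oplus Z} \text{ is a total $\{0,1\}$-valued DNC function relative to } Z\}
\]
is null. This suffices because $X\oplus Z$ has PA degree over $Z$ iff it computes a $\{0,1\}$-valued function $h$ with $h(e)\neq \Phi_e^{Z}(e)$ whenever the latter converges. Indeed, being PA over $Z$ is equivalent to computing a member of a fixed nonempty $\Pi^{0,Z}_1$ class of $\{0,1\}$-valued DNC$^Z$ functions. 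The whole class of such $X$ is then a countable union of the $\mathcal{A}_\Phi$, hence null.

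Suppose toward a contradiction that $\mu(\mathcal{A}_\Phi)>0$. By the Lebesgue density theorem there is a string $\sigma$ with $\mu(\mathcal{A}_\Phi\cap[\sigma]) > \tfrac{2}{3}2^{-|\sigma|}$. Use the recursion theorem relative to $Z$ to build an index $e$ such that $\Phi_e^Z(e)$ searches, computably in $Z$, for a stage $s$ and a bit $b\in\{0,1\}$ with
\[
\mu\{X\succ\sigma : \Phi^{X\oplus Z}_s(e)\downarrow = b\} > \tfrac{1}{2}\,2^{-|\sigma|}.
\]
When it finds one, it outputs $\Phi_e^Z(e)=b$. This measure is computable from $Z$ at each stage, since only finitely many strings are involved, so the search is $Z$-effective.

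First, the search must succeed. Every $X\in\mathcal{A}_\Phi\cap[\sigma]$ has $\Phi^{X\oplus Z}(e)$ defined, so at some stage more than $\tfrac{2}{3}2^{-|\sigma|}$ of $[\sigma]$ converges. One of the two values then takes at least half of that, which exceeds $\tfrac13 2^{-|\sigma|}$. That is not quite $\tfrac12$, so the thresholds must be tuned. The fix is to choose $\sigma$ with density above $\tfrac{3}{4}$ (or above $1-\varepsilon$), and let the search look for $b$ with measure greater than $\tfrac14 2^{-|\sigma|}$. Second, once $\Phi_e^Z(e)=b$ is found, more than $\tfrac14 2^{-|\sigma|}$ of $[\sigma]$ has $\Phi^{X\oplus Z}(e)=b=\Phi^Z_e(e)$. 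Since $\mathcal{A}_\Phi$ misses less than $\tfrac14 2^{-|\sigma|}$ of $[\sigma]$, some $X\in\mathcal{A}_\Phi$ has $\Phi^{X\oplus Z}(e)=\Phi_e^Z(e)$, contradicting the DNC property.

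The main obstacle is getting these measure thresholds consistent, together with the use of the recursion theorem. The recursion theorem is needed because $\Phi_e^Z$ must refer to its own index $e$ as the argument fed to $\Phi^{X\oplus Z}$. The cleanest way to handle it is to define a $Z$-computable function $g$ whose value on input $e$ runs the above search with argument $e$. Take its fixed point via the $s$-$m$-$n$ theorem, and note that $\sigma$ and $\Phi$ are fixed in advance, since density is chosen before $e$. Everything else is routine measure bookkeeping.
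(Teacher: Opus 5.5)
The paper gives no proof of this statement. It is quoted from Kučera and used only as a black box, together with Kautz's theorem, in the measure-theoretic treatment of $\mathsf{COH}$. So there is no route to compare against: your proposal is a self-contained proof, and it is correct. It is the standard argument combining Lebesgue density with a recursion-theorem ``majority vote,'' and it relativizes to $Z$ without change.

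A few points to tidy:
\begin{enumerate}
\item You only use the trivial direction of the PA/DNC correspondence. If $X\oplus Z$ is PA over $Z$, it computes a member of the nonempty $\Pi^0_1(Z)$ class of $\{0,1\}$-valued DNC$^Z$ functions, so $X\in\mathcal{A}_\Phi$ for some $\Phi$. The converse is not needed.
\item Note that $\mathcal{A}_\Phi$ is Borel, so the density theorem applies.
\item Drop the abortive $\frac{2}{3}$/$\frac{1}{2}$ thresholds and state the working ones: density $>\frac{3}{4}$ and search threshold $\frac{1}{4}$ (or $\frac{2}{3}$ and $\frac{1}{3}$).
\begin{itemize}
\item The search halts by continuity of measure from below applied to $\mathcal{A}_\Phi\cap[\sigma]$, on which all values of $\Phi^{X\oplus Z}(e)$ lie in $\{0,1\}$.
\item Once it halts with value $b$, the class of $X\succ\sigma$ with $\Phi^{X\oplus Z}(e)=b$ has measure $>\frac{1}{4}2^{-|\sigma|}$. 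Since $[\sigma]\setminus\mathcal{A}_\Phi$ has measure $<\frac{1}{4}2^{-|\sigma|}$, that class must meet $\mathcal{A}_\Phi$, which gives the contradiction.
\end{itemize}
\item Your fixed-point step conflates the index map with the partial function when it speaks of a ``$Z$-computable function $g$ whose value runs the search.''
\begin{itemize}
\item Instead, take a computable $g$ such that, for every input $x$, $\Phi^Z_{g(e)}(x)$ runs the search with parameter $e$.
\item Apply the relativized recursion theorem to get $e$ with $\Phi^Z_e=\Phi^Z_{g(e)}$. Then $\Phi^Z_e(e)$ is exactly the search on parameter $e$.
\item This is legitimate because $\sigma$ is determined by $\Phi$ alone, before $e$ is chosen, as you observe.
\end{itemize}
\end{enumerate}
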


Letting $Z = \emptyset'$, this shows that the class of $X$ such that $X \oplus \emptyset'$ is of PA degree over~$\emptyset'$ is null. This is however not sufficient since one needs to consider $X'$ instead of $X \oplus \emptyset'$. Thankfully, Kautz~\cite{kautzDegreesRandomSets1991} proved the following theorem:

\begin{thm}[Kautz~\cite{kautzDegreesRandomSets1991}]\label[theorem]{thm:gl-measure}
For every~$n \geq 1$,
$$
\mu \{ X : X^{(n)} \equiv_T X \oplus \emptyset^{(n)} \} = 1
$$
\end{thm}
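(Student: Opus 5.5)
We prove Kautz's theorem by showing that every sufficiently random set is $\mathrm{GL}_n$, i.e.\ satisfies $X^{(n)} \leT X \oplus \emptyset^{(n)}$. The reverse reduction $X \oplus \emptyset^{(n)} \leT X^{(n)}$ holds for every $X$, and the class of $(n+1)$-random sets has measure $1$. The plan is to reduce to the case $n=1$ and then iterate with relativization.

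\emph{Case $n=1$.} Every $2$-random $X$ should satisfy $X' \leT X \oplus \emptyset'$. For each $e$, let $U_e = \setbuildc{Y}{\Phi_e^Y(e)\downarrow}$, an effectively open class. Using $\emptyset'$, approximate $\mu(U_e)$ from below to within $2^{-k}$: find a finite stage $s$ such that $\mu(U_e) - \mu(U_{e,s}) < 2^{-k}$. This is possible because $\mu(U_e)$ is left-c.e.\ and hence $\emptyset'$-computable.

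Set $V_k = \bigcup_e \left(U_e \setminus U_{e,s(e,k+e)}\right)$. The class $V_k$ is $\Sigma^{0,\emptyset'}_1$, being a difference of an effectively open class and a clopen class chosen uniformly in $\emptyset'$. Its measure is at most $\sum_e 2^{-(k+e)} \le 2^{1-k}$, so $(V_k)_k$ is a Martin-L\"of test relative to $\emptyset'$. A $2$-random $X$ (equivalently, $1$-random relative to $\emptyset'$) lies outside $V_k$ for some $k$, and we fix such a $k$ non-uniformly.

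For that $k$, decide $e \in X'$ as follows: $e \in X'$ if and only if $X \in U_{e,s(e,k+e)}$. The direction from left to right uses $X \notin V_k$; the other direction is trivial. The right-hand side is computable from $X \oplus \emptyset'$, since the stage $s$ is $\emptyset'$-computable and membership in a clopen set uses finitely many bits of $X$. Hence $X' \leT X \oplus \emptyset'$.

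\emph{General $n$.} Proceed by induction, using the relativized version of the case above: for every oracle $A$, every $2$-$A$-random $X$ satisfies $(X\oplus A)' \leT X \oplus A'$. The proof is identical, with $U_e$ now defined using $\Phi_e^{X\oplus A}$.

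Suppose $X$ is $(n+1)$-random and $X^{(n-1)} \leT X \oplus \emptyset^{(n-1)}$. Then
\[
X^{(n)} \leT (X \oplus \emptyset^{(n-1)})' .
\]
Apply the relativized statement with $A = \emptyset^{(n-1)}$. It applies because $X$ is $2$-random relative to $\emptyset^{(n-1)}$, which is exactly $(n+1)$-randomness. This gives
\[
(X \oplus \emptyset^{(n-1)})' \leT X \oplus \emptyset^{(n)} .
\]
Since $(n+1)$-random implies $n$-random, the induction hypothesis is available.

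\emph{Main obstacle.} The key step is checking that $V_k$ is a genuine $\emptyset'$-Martin-L\"of test. This needs the stage function $s(e,k)$ to be $\emptyset'$-computable, which holds because the measures $\mu(U_e)$ are uniformly left-c.e.\ reals. The remaining steps are routine bookkeeping.
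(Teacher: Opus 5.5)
Your proof is correct. The paper does not prove this theorem; it quotes it from Kautz and uses it as a black box, so there is no in-paper argument to compare with.

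What you give is the standard self-contained proof:
\begin{itemize}
\item Cover the ``late convergence'' part of each $U_e$ by an $\emptyset'$-Martin-L\"of test $(V_k)$.
\item Outside a single level $V_k$, the question $e\in X'$ is settled by the $\emptyset'$-computable stage $s(e,k+e)$, which gives the base case $n=1$.
\item Relativize this to $A=\emptyset^{(n-1)}$ and induct, using monotonicity of the jump and the identification of $2$-$\emptyset^{(n-1)}$-randomness with $(n+1)$-randomness.
\end{itemize}

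Your route proves more than the quoted measure statement. You get the pointwise fact that every $(n+1)$-random set is $\mathrm{GL}_n$. Measure one then follows because the $(n+1)$-random sets form a co-null class, which fits the paper's framework: sufficiently typical sets, in the measure sense, are $\mathrm{GL}_n$. For the paper's own purposes only $n=1$ is used: to replace $X'$ by $X\oplus\emptyset'$ in the $\COH$ argument, and in the $\POS$ argument about high sets. So your base case alone already covers the paper's applications.

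The one step you leave implicit is how $\emptyset'$ finds $s(e,k)$. Have $\emptyset'$ compute a rational $q$ with $|q-\mu(U_e)|<2^{-k-1}$. Then search computably for $s$ with $\mu(U_{e,s})>q-2^{-k-1}$. This search terminates and yields $\mu(U_e)-\mu(U_{e,s})<2^{-k}$ uniformly in $e$ and $k$, as your test requires.
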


This enables us to classify $\COH$ with respect to measure:

\begin{thm}
$\COH$ has property \textup{P}${}_1$ for measure.
\end{thm}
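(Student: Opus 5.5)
The witness will be the sequence $\vec{R}$ of primitive recursive sets. It is uniformly computable, so it is a computable $\COH$-instance. It then suffices to show that the class
$$\Cc = \{ X : X \text{ computes an infinite cohesive set for } \vec{R}\}$$
is null.

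First, suppose $X$ computes an infinite cohesive set $C$ for $\vec{R}$. By the first item of \Cref{thm:coh-degrees}, $C'$ is of PA degree over $\emptyset'$. Since $C \leT X$, we have $C' \leT X'$, and PA degree over $\emptyset'$ is upward closed under Turing reducibility. So $X'$ is of PA degree over $\emptyset'$.

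Next, apply \Cref{thm:gl-measure} with $n=1$. Outside a null class, $X' \equiv_T X \oplus \emptyset'$. So for almost every $X \in \Cc$, the set $X \oplus \emptyset'$ is of PA degree over $\emptyset'$.

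By \Cref{thm:pa-measure} with $Z = \emptyset'$, the class of such $X$ is null. Hence $\Cc$ is contained in the union of two null classes, so $\Cc$ is null. This is exactly property P${}_1$ for measure.

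No step here is a real obstacle. The only care needed is the monotonicity in the first step: passing from $C \leT X$ to $C' \leT X'$, and using that any degree above a degree that is PA over $\emptyset'$ is itself PA over $\emptyset'$. After that, the proof just combines Kautz's and Kučera's theorems.
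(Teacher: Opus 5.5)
Your proposal is correct and follows the paper's proof essentially verbatim. You use the primitive recursive sets as the computable instance, Jockusch--Stephan to get that the jump of a cohesive set is PA over $\emptyset'$, and then Kautz with $n=1$ and Ku\v{c}era with $Z=\emptyset'$ to conclude nullness, making explicit along the way the monotonicity step ($C \leT X$ implies $C' \leT X'$, and being PA over $\emptyset'$ is upward closed) that the paper leaves implicit.
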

\begin{proof}
Let $\vec{R} = R_0, R_1, \dots$ be the sequence of all primitive recursive sets.
By Jockusch and Stephan (\Cref{thm:coh-degrees}), the jump of any infinite cohesive set for~$\vec{R}$ is of PA degree over~$\emptyset'$.
By Kautz (\Cref{thm:gl-measure}) and Ku\v{c}era (\Cref{thm:pa-measure}), 
$$
\mu \{ X : X' \mbox{ is of PA degree over } \emptyset' \} = \mu \{ X : X \oplus \emptyset' \mbox{ is of PA degree over } \emptyset' \} = 0
$$
It follows that the measure of sets which compute an infinite cohesive set for~$\vec{R}$ is~0.
\end{proof}

In their reverse-mathematical study of partial orders and linear orders, Hirschfeldt and Shore~\cite{hirschfeldtCombinatorialPrinciplesWeaker2007} introduced a cohesive counterpart of the Ascending Descending Sequence principle:

\begin{stmt}
$\CADS$ is the statement \qt{Every infinite linear order has a suborder that is either of type $\omega$ or $\omega^*$ or $\omega+\omega^*$.}
\end{stmt}

Hirschfeldt and Shore~\cite{hirschfeldtCombinatorialPrinciplesWeaker2007} proved in particular that $\COH$ implies $\CADS$ over~$\RCA_0$ and that the converse holds over~$\RCA_0 + \BSig_2$, but left the equivalence over~$\RCA_0$ open. The proofs show that $\COH$ and $\CADS$ seen as problems are computably equivalent:

\begin{thm}[Hirschfeldt and Shore~\cite{hirschfeldtCombinatorialPrinciplesWeaker2007}]\label[theorem]{thm:coh-cads}
$\COH \equiv_c \CADS$.
\end{thm}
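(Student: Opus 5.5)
The plan is to prove the two computable reductions separately, $\CADS \leq_c \COH$ and $\COH \leq_c \CADS$, following the constructions of Hirschfeldt and Shore and checking that each uses only a single instance of the other problem together with the original instance as oracle.

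\textbf{From $\COH$ to $\CADS$.} Let $(L,<_L)$ be an infinite linear order on $\omega$. Define the uniformly $L$-computable sequence $R_n = \{x : n <_L x\}$. By $\COH$ there is an infinite set $C$ cohesive for $\vec R$. For each $n\in C$, $C$ lies almost entirely above $n$ or almost entirely below $n$. Call $n$ \emph{left} if $C\subseteq^* R_n$, and \emph{right} otherwise; left elements are then $<_L$ right elements. We split into cases.
\begin{itemize}
\item If infinitely many elements of $C$ are left and infinitely many are right, we extract a suborder of type $\omega+\omega^*$ by alternately choosing left elements in increasing order and right elements in decreasing order.
\item If only one side is infinite, we obtain type $\omega$ or $\omega^*$.
\end{itemize}
The obstacle is that ``left'' is only $\Delta^0_2$ in $C\oplus L$. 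We handle it with a greedy search: when adding a new element, pick $x\in C$ that is $<_L$-above all previous elements and below all current right-candidates. Cohesiveness then guarantees that each wrong guess is corrected after finitely many steps. The construction is arranged so that the output is a computable-in-$C\oplus L$ sequence, thinning to an ascending or descending run, which sidesteps the non-uniform case split. This is the step I expect to require the most care, and I would follow Hirschfeldt and Shore's argument closely here.

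\textbf{From $\CADS$ to $\COH$.} Given $\vec R$, it suffices by the standard argument (iteratively refining sets, as in Jockusch--Stephan) to build an $\vec R$-computable linear order whose suborders of the three types compute cohesive sets. Order $\omega$ by comparing the characteristic strings $\langle R_0(x),R_1(x),\dots\rangle$ lexicographically, breaking ties by the natural order of $x$. Strictly, one uses finite truncations $\langle R_0(x),\dots,R_x(x)\rangle$ so that the order is computable; Hirschfeldt and Shore's trick ensures the truncation does not spoil the argument.

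Suppose $S$ is a suborder of type $\omega$, $\omega^*$, or $\omega+\omega^*$. Consider an infinite monotone part of $S$; computing this part from $S\oplus\vec R$ requires locating the split point, which is done non-uniformly since computable reducibility allows that. Lexicographic monotonicity forces the value of $R_0$ along this part to stabilize, then the value of $R_1$, and so on. Hence the part is cohesive, modulo the truncation issue, which is handled by a thinning that $S\oplus\vec R$ computes.

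Combining the two reductions yields $\COH\equiv_c\CADS$.
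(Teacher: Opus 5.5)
Both directions contain a step that fails as written. For $\CADS\leq_c\COH$, the obstacle you describe does not exist, and your proposed fix cannot work. With $R_n=\{x : n<_L x\}$ and $C$ cohesive, each $n\in C$ has either finitely many $<_L$-predecessors in $C$ (if $C\subseteq^* R_n$) or finitely many $<_L$-successors in $C$ (if $C\subseteq^*\overline{R_n}$). So $(C,<_L)$ is already stable, i.e.\ of type $\omega+\omega^*$, $\omega+k$ or $k+\omega^*$. Deleting finitely many elements, which is finite nonuniform information, gives a $\CADS$-solution, and no left/right guessing is needed. By contrast, a construction that ``thins to an ascending or descending run'' computable in $C\oplus L$ is impossible in general. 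Take $L$ computable of type $\omega+\omega^*$ with no computable monotone sequence (Tennenbaum, Denisov). Then every $R_n$ is finite or cofinite, so $C=\omega$ is cohesive, yet $C\oplus L$ computes no monotone run.

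For $\COH\leq_c\CADS$, your truncated lexicographic order is a suitable instance. Your stabilization argument correctly shows that every suborder of type $\omega$ or $\omega^*$ is cohesive outright. The gap is the $\omega+\omega^*$ case. The cut between the $\omega$-part and the $\omega^*$-part of $S$ is not finite information; in general it is only $\Delta^0_2$ in $S\oplus\vec R$. So non-uniformity cannot ``locate the split point'', and $S\oplus\vec R$ may compute no infinite monotone subsequence at all. For example, let $R_n(x)=P_x(n)$ for a computable approximation $(P_x)$ of a $\Delta^0_2$ set $P$ that is neither left-c.e.\ nor right-c.e.\ as a real. Then $L$ itself has type $\omega+\omega^*$ with no computable monotone sequence, while $S=\omega$ is a legitimate $\CADS$-solution. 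The missing idea is a case split that exploits the lexicographic structure. If $S$ is cohesive, output $S$ itself. Otherwise some $R_n$ splits $S$ into two infinite pieces. Then the sets $S_\tau=\{x\in S : x\geq n,\ \sigma_x\succeq\tau\}$ for $\tau\in 2^{n+1}$ are finitely many consecutive $<_L$-intervals covering almost all of $S$, and at least two of them are infinite. In an order of type $\omega+\omega^*$, two consecutive intervals can be infinite only if one of them lies entirely inside the $\omega$-part. That $S_\tau$ has type $\omega$, so it is cohesive by your stabilization argument. It is also computable from $S\oplus\vec R$ given the finite string $\tau$, and this string, not a cut in $S$, is the finite information the reduction uses.
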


Thanks to \Cref{prop:p1-not-p1-separation}, we deduce that

\begin{thm}
$\CADS$ has property \textup{P}${}_1$ for measure.
\end{thm}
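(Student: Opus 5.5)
We derive the statement from the already established fact that $\COH$ has property P${}_1$ for measure, together with the computable equivalence $\COH \equiv_c \CADS$ of \Cref{thm:coh-cads}. The idea is to read \Cref{prop:p1-not-p1-separation} contrapositively: if $\Qsf$ has P${}_1$ and $\Qsf \leq_c \Psf$, then $\Psf$ has P${}_1$. Taking $\Qsf = \COH$ and $\Psf = \CADS$, the reduction $\COH \leq_c \CADS$ gives the claim immediately.

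Since the contrapositive hides what is happening, I would also unwind it directly.

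1. Let $\vec R$ be the sequence of primitive recursive sets. This is a computable $\COH$-instance, and the class $\Cc$ of oracles computing an infinite cohesive set for $\vec R$ is null.

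2. By $\COH \leq_c \CADS$, there is a $\vec R$-computable, hence computable, linear order $L$. Every $\CADS$-solution $S$ to $L$ satisfies that $S \oplus \vec R \equiv_T S$ computes a $\COH$-solution to $\vec R$.

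3. Any oracle $Z$ computing a $\CADS$-solution to $L$ therefore computes a cohesive set for $\vec R$, so $Z \in \Cc$.

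4. Hence the class of oracles computing a $\CADS$-solution to the computable instance $L$ is contained in the null class $\Cc$. By axiom (2) of smallness it is small, which is exactly property P${}_1$ for $\CADS$.

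The only point needing care is the direction and form of the reduction in \Cref{thm:coh-cads}. We need $\COH \leq_c \CADS$ in the strong sense of the definition of $\leq_c$: from a $\COH$-instance we uniformly compute a linear order, and every suborder solution computes a cohesive set together with the instance. This is the content of Hirschfeldt and Shore's argument, which builds a linear order from the sets $R_n$ such that any suborder of type $\omega$, $\omega^*$ or $\omega+\omega^*$ yields a cohesive set. Computability of the instance is preserved because the $\COH$-instance $\vec R$ is computable.
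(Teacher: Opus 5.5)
Your proposal is correct and is essentially the paper's argument. The paper also deduces the result in one line from three ingredients: the fact that $\COH$ has property P${}_1$ for measure, the reduction $\COH \leq_c \CADS$ from \Cref{thm:coh-cads}, and the contrapositive of \Cref{prop:p1-not-p1-separation}; your direct unwinding is that proposition's proof specialized to the computable instance $\vec R$, and it needs no uniformity, only the existence of a computable linear order whose $\CADS$-solutions compute cohesive sets for $\vec R$.
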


\subsection{Positive measure domination}\label[section]{sec:ran-pos}

The positive measure domination principle was introduced by Kjos-Hanssen, Miller and Solomon~\cite{Kjos-Hanssen2012lowness} in their study of almost everywhere domination.

\begin{stmt}
$\POS$ is the statement \qt{For every G${}_\delta$ class $\Cc \subseteq 2^\omega$ of positive measure, there is a closed sub-class $\Dc \subseteq \Cc$ of positive measure.}
\end{stmt}

Recall the notion of almost everywhere domination (\Cref{def:aed}).  
Kjos-Hanssen, Miller and Solomon~\cite{Kjos-Hanssen2012lowness} proved the following equivalence:

\begin{thm}[Kjos-Hanssen, Miller and Solomon~\cite{Kjos-Hanssen2012lowness}]\label[theorem]{thm:uaed-pmd}
Let $A$ be a set. The following are equivalent:
\begin{itemize}
    \item[(1)] $A$ is uniformly almost everywhere dominating
    \item[(2)] For every $\Pi^0_2$ class $\Cc \subseteq 2^\omega$ of positive measure, there is a $\Pi^0_1(A)$ class $\Dc \subseteq \Cc$ of positive measure.
\end{itemize}
\end{thm}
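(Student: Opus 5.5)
The plan is to prove the two implications separately. In both directions the link between domination and closed subclasses is the same. On a Borel class all of whose members lie in a single $\Pi^0_2$ class $\bigcap_n U_n$, the \qt{settling-time} function $g_X(n) = \min\{s : X \in U_{n,s}\}$ is total and $X$-computable. Conversely, on a closed class on which a Turing functional is total, compactness bounds the convergence times.

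\emph{(1)$\Rightarrow$(2).} Fix $f \leT A$ witnessing uniform almost everywhere domination, and a $\Pi^0_2$ class $\Cc = \bigcap_n U_n$ of positive measure, where the $U_n$ are uniformly $\Sigma^0_1$ with stage approximations $U_{n,s}$.
\begin{itemize}
\item For $X \in \Cc$, the function $g_X$ above is total and computable from $X$ by a fixed functional.
\item Since $\Cc$ has positive measure and $f$ dominates every $X$-computable function for $\mu$-almost every $X$, almost every $X \in \Cc$ satisfies $g_X \le^* f$.
\item For each $m$, define
\[
\Dc_m = \{X : \forall n\ X \in U_{n,\max(f(n),m)}\}.
\]
This is a $\Pi^0_1(A)$ class, and $\Dc_m \subseteq \Cc$.
\item If $g_X \le^* f$, take $m \ge g_X(n)$ for the finitely many exceptional $n$; then $X \in \Dc_m$. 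So $\bigcup_m \Dc_m$ covers $\Cc$ up to a null set.
\item By countable additivity, some $\Dc_m$ has positive measure, which gives (2).
\end{itemize}

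\emph{(2)$\Rightarrow$(1).} Fix a Turing functional $\Phi_e$ with $\mu\{X : \Phi_e^X \text{ total}\} > 0$; this class is $\Pi^0_2$. Apply (2) to get a $\Pi^0_1(A)$ class $\Dc_e$ of positive measure on which $\Phi_e$ is total. The $A$-computable tree for $\Dc_e$ allows an $A$-computable search for a level $s$ at which every surviving string has made $\Phi_e(n)$ converge. Such a level exists by compactness, since the convergence sets form an open cover of $\Dc_e$. This yields $h_e \leT A$ that bounds $\Phi_e^X$ for all $X \in \Dc_e$.

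Two problems remain: passing from \qt{positive measure} to \qt{measure one}, and from one functional to all functionals with a single $f$. For the first, apply (2) to the classes $[\sigma] \cap \{X : \Phi_e^X \text{ total}\}$ for all $\sigma$. By the Lebesgue density theorem, the set of $X$ on which $\Phi_e^X$ is total but not dominated by the resulting bounds must then be null. Otherwise it would have a density point $\sigma$, and one would reapply (2) inside $[\sigma]$ to the $\Pi^0_2$ class obtained by intersecting with the exceptional set for $\Phi_e$; that exceptional set is itself $\Pi^0_2$ relative to the finitely many $A$-bounds used so far.

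\emph{The main obstacle} is non-uniformity: (2) gives each $\Dc_{e,\sigma}$ only existentially, so a single $f \leT A$ dominating all the $h_{e,\sigma}$ cannot simply be read off. I would first try to apply (2) once to a single combined $\Pi^0_2$ class, the class of $X = \bigoplus_{e} X_e$ with $\Phi_e^{X_e}$ total for all $e$ in a suitable set, and use product measure so that one positive-measure $\Pi^0_1(A)$ class gives bounds for every $e$ simultaneously. If that fails, I would instead derive from (2) that every $A$-random is $2$-random, i.e. $\emptyset' \le_{LR} A$. This uses (2) to replace $\Pi^0_2$ null-complement descriptions by $\Pi^0_1(A)$ ones, combined with Kučera-style arguments. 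I would then invoke the Kjos-Hanssen–Miller–Solomon characterization that $\emptyset' \le_{LR} A$ is equivalent to uniform almost everywhere domination.
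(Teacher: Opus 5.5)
The paper does not prove this theorem; it quotes it from Kjos-Hanssen, Miller and Solomon, so your proposal has to stand on its own.

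Your direction (1)$\Rightarrow$(2) is correct and complete. The settling-time function $g_X$ is total and $X$-computable on $\Cc$. The $\Dc_m$ are increasing $\Pi^0_1(A)$ subclasses of $\Cc$ that cover almost all of it, and countable additivity finishes the argument. The first step of (2)$\Rightarrow$(1) is also fine: a $\Pi^0_1(A)$ class of positive measure on which $\Phi_e$ is total yields an $A$-computable bound by K\"onig's lemma on $A$-computable trees. After that, however, there is a genuine gap. Upgrading to full measure and to a single $f\leT A$ is exactly the substance of the cited theorem, and none of your three suggestions achieves it.

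\textbf{The reapplication step.} Write $\mathrm{Tot}_e=\{X:\Phi_e^X\text{ total}\}$, and let $\Dc_{e,\sigma}$, $h_{e,\sigma}$ be the classes and bounds you obtain inside each $[\sigma]$. Hypothesis (2) speaks only about lightface $\Pi^0_2$ classes. Every class you propose to reapply it to is merely $\Pi^0_2(A)$: the part of $\mathrm{Tot}_e\cap[\sigma]$ not covered by earlier $\Pi^0_1(A)$ pieces, or the set where $\Phi_e^X$ escapes the $A$-computable bounds found so far. This cannot be patched in general. Using $\emptyset'$, one can put into an open set of measure $\le 1/2$, for each $e,k$ with $\mu(P_e)\ge 2^{-k}$, a cylinder meeting the $e$-th $\Pi^0_1$ class $P_e$ in positive measure. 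Relativized to $A$, this gives for every $A$ a $\Pi^0_1(A')$ class, hence a $\Pi^0_2(A)$ class, of positive measure with no $\Pi^0_1(A)$ subclass of positive measure.

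\textbf{The density argument.} Without reapplication it gives nothing. (2) provides no lower bound on $\mu(\Dc_{e,\sigma})/2^{-|\sigma|}$, so a density point of the exceptional set yields no contradiction; a dense open set of tiny measure also meets every cylinder in positive measure. Moreover, the countably many $h_{e,\sigma}$ are not uniformly $A$-computable, so nothing forces a single $f\leT A$ to dominate them all.

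\textbf{The product class.} This fails too. The set $\{e:\mu(\mathrm{Tot}_e)>0\}$ is not computable, so the combined class is not $\Pi^0_2$. Infinitely many factors below $1$ can make it null. Even a successful application gives domination only on a positive-measure set, which returns you to the previous problem.

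\textbf{The fallback.} This is circular. The step (2)$\Rightarrow\emptyset'\le_{LR}A$ is routine: every $\Pi^0_1(\emptyset')$ class is $\Pi^0_2$, and then Ku\v{c}era's tail argument applies. But $\emptyset'\le_{LR}A\Rightarrow$ u.a.e.d.\ is the main result of the very paper being cited. Given the routine step, it is at least as strong as the implication you want to prove.

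What is missing is a genuinely new way to extract uniformity and full measure from the purely qualitative hypothesis (2). That is what Kjos-Hanssen, Miller and Solomon supply; the same technique gives $\le_{LR}\Rightarrow\le_{LK}$.
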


Any set~$A$ satisfying (2) is called \emph{positive measure dominating}. Note that this is the computability-theoretic counterpart of the $\POS$ principle. By Martin~\cite{martinClassesRecursivelyEnumerable1966}, every uniformly almost everywhere dominating degree is high, and therefore, by Kautz~\cite{kautzDegreesRandomSets1991} (see \Cref{thm:gl-measure}), the measure of sets $A$ satisfying (2) is~0. We have all the necessary ingredients to prove the following theorem:

\begin{thm}
$\POS$ has property \textup{P}${}_1$ for measure.
\end{thm}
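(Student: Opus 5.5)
We exhibit a single computable instance of $\POS$ whose solutions are computed by only a null class of oracles. Natural choice: a computable (i.e., given by a computable code) $\Pi^0_2$ class $\Cc$ of positive measure such that any closed positive-measure subclass $\Dc \subseteq \Cc$, or rather any code for one, computes a uniformly almost everywhere dominating function. The chain of reasoning is then: an oracle $X$ computing a $\POS$-solution to $\Cc$ computes a code for a closed $\Dc \subseteq \Cc$ of positive measure. Such a code makes $\Dc$ a $\Pi^0_1(X)$ class. If we choose $\Cc$ well, this forces $X$ to be u.a.e.d., hence high by Martin, and the class of high sets is null by Kautz (\Cref{thm:gl-measure} with $n=1$: almost every $X$ satisfies $X' \equiv_T X \oplus \emptyset'$, so $X' \not\geq_T \emptyset''$).

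The key step is passing from one instance to u.a.e.d. \Cref{thm:uaed-pmd} quantifies over all $\Pi^0_2$ classes, but we need a single computable instance. Two options.

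\emph{First option (preferred):} inspect the proof of (2)$\Rightarrow$(1) in Kjos-Hanssen, Miller and Solomon. It uses the specific $\Pi^0_2$ class $\Cc = \{Z : Z \text{ is weakly 2-random}\ldots\}$-type construction. Concretely, we take $\Cc$ to be a $\Pi^0_2$ class of positive measure consisting of sets $Z$ such that every $Z$-computable total function $\Phi_e^Z$ is dominated by a fixed $\emptyset'$-computable bound on a large piece. Then a $\Pi^0_1(X)$ subclass $\Dc$ of positive measure lets $X$ compute, for each $e$, a bound uniformly dominating $\Phi^Z_e$ for all $Z\in \Dc$ (compactness over the closed set). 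This gives domination on a positive-measure set, and hence almost everywhere via the usual $0$-$1$/Lebesgue density argument. Thus $X$ is u.a.e.d.

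\emph{Second option (fallback):} use a universal $\Pi^0_2$ class. Combine countably many $\Pi^0_2$ classes $\Cc_e$ of positive measure into one computable $\Pi^0_2$ class $\Cc = \bigcup_e \{1^e0\}^\frown \Cc_e$ (suitably scaled). A closed positive-measure subclass of $\Cc$ meets only some of the pieces, so instead we plan to use the first option, since only the specific class used in the KMS proof matters.

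Having a computable instance $\Cc$ such that every $X$ computing a solution is u.a.e.d., hence high, we conclude that $\mu\{X : X \text{ computes a } \POS\text{-solution to } \Cc\} \le \mu\{X : X \text{ high}\} = 0$. That is property P${}_1$.

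The main obstacle is the uniformity step: verifying that a single computable $\Pi^0_2$ class suffices in the direction (2)$\Rightarrow$(1) of \Cref{thm:uaed-pmd}. We also need that the solution code, not merely a $\Pi^0_1(A)$ description, yields domination relative to $X$. We expect the KMS argument to be already uniform in this sense, since it builds the class from a single $\emptyset'$-computable dominating function (Kurtz) and the requirement that $\Phi_e^Z$ be total.
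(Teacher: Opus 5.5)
Your chain (solution code $\Rightarrow$ $\Pi^0_1(X)$ subclass $\Rightarrow$ u.a.e.d.\ $\Rightarrow$ high $\Rightarrow$ null) is the right one. However, the step you flag as the main obstacle is a genuine gap, and the compactness sketch in your first option does not close it. Compactness of $\Dc$ bounds $\Phi^Z_e(n)$ uniformly over $Z \in \Dc$ only when $\Phi^Z_e$ is total for \emph{every} $Z \in \Dc$. Which $e$ have this property is not decidable from $X$, and functions $\Phi^Z_e$ that are total for some but not all $Z \in \Dc$ are not controlled at all. So you get at best a non-uniform family of $X$-computable bounds, one per such $e$, not the single $X$-computable $f$ that uniform a.e.\ domination requires. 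The upgrade from positive measure to measure one via the $0$-$1$ law is fine, but only once you have one $f$. Nor can you simply cite \Cref{thm:uaed-pmd}: its direction (2)$\Rightarrow$(1) assumes (2) for \emph{all} $\Pi^0_2$ classes of positive measure. Expecting that the argument is uniform in one class is not a proof.

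The missing idea is that you never need to exhibit the instance. This is what \Cref{thm:p1-degrees-bounding} provides, and it is how the paper argues. Every computable $\Pi^0_2$ class of positive measure is a computable $\POS$-instance, and an $A$-computable solution is a $\Pi^0_1(A)$ subclass of positive measure. So every $\POS$-bounding set satisfies (2) of \Cref{thm:uaed-pmd}, hence is u.a.e.d., hence high by Martin, and the high sets are null by Kautz. Now let $\Cc_e$ be the class of oracles computing no solution to the $e$-th computable instance. The $\Cc_e$ are Borel tailclasses whose union is co-null, so some $\Cc_e$ is non-null, and therefore co-null by Kolmogorov's $0$-$1$ law. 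This is exactly the countable pigeonhole your second option was reaching for, with tail-triviality replacing any need to merge instances. If you do want an explicit instance, take $2^\omega \setminus U$, where $U$ is the first level of a universal $\emptyset'$-Martin-L\"of test. It is $\Pi^0_1(\emptyset')$, hence $\Pi^0_2$, and has measure at least $1/2$. If $X$ computes a closed $\Dc \subseteq 2^\omega \setminus U$ of positive measure, Ku\v{c}era's argument (iterating the $\Sigma^0_1(X)$ class $2^\omega\setminus\Dc$ of measure $<1$) shows every $X$-random set has a tail in $\Dc$. That tail lies outside $U$, so every $X$-random set is $2$-random. By van Lambalgen's theorem, however, $\Omega$ (which is not $2$-random) is $X$-random for almost every $X$, so the solvers of this instance form a null class.
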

\begin{proof}
By \Cref{thm:p1-degrees-bounding}, it suffices to show that the class of sets satisfying \Cref{thm:uaed-pmd}(2), or equivalently satisfying \Cref{thm:uaed-pmd}(1), has measure~0. By Kautz~\cite{kautzDegreesRandomSets1991} (see \Cref{thm:gl-measure}), the measure of the class of sets of high degree is~0. By Martin's domination theorem~\cite{martinClassesRecursivelyEnumerable1966}, any set satisfying \Cref{thm:uaed-pmd}(1) is of high degree, so we conclude.
\end{proof}

\section{Applications to Genericity}\label{sec:GENapplications}

We now turn to the classification of the reverse mathematics zoo in terms of properties for genericity.
As for randomness, many very weak principles have property P${}_1$ for genericity, but the exact picture is slightly different. In particular, the atomic model theorem ($\AMT$) has property P${}_1$ for randomness, but it does not even have property P${}_3$ for genericity.

We will give in \Cref{sec:rpi12-framework} a general criterion for deciding properties for genericity over a family of $\Pi^1_2$-problems called \emph{restricted $\Pi^1_2$-problems} satisfied by most principles studied in reverse mathematics.

\subsection{Atomic model theorem}

Recall the atomic model theorem from \Cref{sec:ran-amt}, stating that every atomic theory admits an atomic model.
Hirschfeldt, Shore and Slaman~\cite{hirschfeldtAtomicModelTheorem2009} introduced a genericity statement closely related to the standard Henkin constructions of models of atomic theories: We say that a set~$G \in 2^\omega$ \emph{meets} a set~$D \subseteq \bstr$ if $G \uh_n \in D$ for some~$n \in \omega$.

\begin{stmt}
$\Pi^0_1\mathsf{G}$ is the statement \qt{For every uniformly $\Pi^0_1$ collection of sets $(D_n)_{n \in \omega}$, each of which is dense in~$\bstr$, there is a set that meets each of them.}
\end{stmt}

Hirschfeldt, Shore and Slaman~\cite{hirschfeldtAtomicModelTheorem2009} and Conidis~\cite{conidisClassifyingModeltheoreticProperties2008} proved that $\AMT$ and $\PIOOG$ are computably equivalent, and that the equivalence holds over~$\RCA_0 + \ISig_2$. More precisely, $\RCA_0 \vdash \PIOOG \to \AMT$, but the converse does not hold.

\begin{thm}[Hirschfeldt, Shore and Slaman~\cite{hirschfeldtAtomicModelTheorem2009} and Conidis~\cite{conidisClassifyingModeltheoreticProperties2008}]\label[theorem]{thm:amt-pioog}
$\AMT \equiv_c \PIOOG$.
\end{thm}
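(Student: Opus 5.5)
Both reductions $\AMT \leq_c \PIOOG$ and $\PIOOG \leq_c \AMT$ must be shown, and each is a uniform construction. For $\AMT \leq_c \PIOOG$, fix an atomic theory $T$ and fix the standard Henkin setup. The language is expanded by countably many constants, and a finite string $\sigma$ codes a finite sequence of Henkin steps, namely a finite set of sentences in the expanded language consistent with $T$. The plan is to define a uniformly $T$-$\Pi^0_1$ sequence of dense sets $D_n$ such that any $G$ meeting all $D_n$ computes, together with $T$, an atomic model. Most requirements are $T$-computable and hence dense $\Pi^0_1$ trivially: deciding sentences, adding Henkin witnesses, and making the diagram complete. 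The delicate requirement says that the tuple of the first $n$ constants satisfies an atom. ``$\varphi$ is an atom of $T$'' is $\Pi^0_1$ relative to $T$: for all $\psi$, $T$ does not prove both $\varphi\to\psi$ and $\varphi\to\neg\psi$, which follows from consistency and completeness of $T$ being deductively closed and $T$-decidable. Hence ``$\sigma$ has committed the tuple to some atom $\varphi$ implying the current conditions'' is $\Pi^0_1(T)$. Density of this set follows from atomicity of $T$: any condition is consistent with $T$, so some atom implies it, and we may extend by that atom.

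One point needs care: the string $\sigma$ should explicitly record the atom it commits to. Then membership in $D_n$ is the $\Pi^0_1(T)$ property that the recorded formula is an atom and $T$ proves it implies the recorded conditions, which is $T$-decidable. Meeting every $D_n$ produces a complete consistent Henkin diagram, and so an atomic model computable from $G\oplus T$.

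For $\PIOOG \leq_c \AMT$, I would follow Hirschfeldt--Shore--Slaman. Given a uniformly $\Pi^0_1$ family of dense sets $(D_n)$, build a computable atomic theory $T$ whose $n$-types encode strings. The theory includes unary predicates $P_i$ coding bits of $G$ and auxiliary relations, arranged so that a formula fixing the $P_i$-pattern $\sigma$ on an element is an atom exactly when $\sigma$ lies in the appropriate $D_n$. This is done by adding, for each stage at which $\sigma$ is seen to leave $D_n$, axioms that split $\sigma$'s type further. Density of the $D_n$ gives atomicity. In any atomic model, an element realizing an atom determines an initial segment in $D_n$, and iterating over elements lets us build $G$ computably from the model.

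The main obstacle is this reverse direction, specifically making $T$ computable and complete while the ``non-atomness'' of $\sigma$ is only revealed in the limit, and ensuring a single element's atom yields a genuine string in each $D_n$ along one coherent $G$. I would handle coherence by using tuples $(a_0,\dots,a_n)$ whose atoms extend one another, with the string for $a_{n+1}$ forced to extend that of $a_n$ by the axioms.
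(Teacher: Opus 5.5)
The paper gives no proof of this theorem; it only cites Hirschfeldt--Shore--Slaman and Conidis. So I am measuring your sketch against the standard argument.

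Your direction $\AMT\leq_c\PIOOG$ is the usual Henkin construction and works, after two repairs. First, your gloss of atomhood (``$T$ does not prove both $\varphi\to\psi$ and $\varphi\to\neg\psi$'') only says that $\varphi$ is consistent. The $\Pi^0_1(T)$ clause you actually need is ``for every $\psi$, $T\vdash\forall\bar x(\varphi\to\psi)$ or $T\vdash\forall\bar x(\varphi\to\neg\psi)$'', together with the $T$-decidable consistency of $\varphi$. Second, every binary string must decode to a legal condition (for instance by skipping steps inconsistent with $T$), because density is required in $2^{<\omega}$.

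The genuine gap is in $\PIOOG\leq_c\AMT$. The problem is not making $T$ computable; it is extracting the strings from the model.
\begin{itemize}
\item Since $T$ is complete and computable from the instance, every principal type is $T$-computable: $\psi$ is in the type generated by $\varphi$ iff $T\vdash\varphi\to\psi$. Hence the $P_i$-pattern of every element of an atomic model is $T$-computable.
\item The only extra information an element carries is \emph{which} atom it realizes, i.e.\ how long a prefix of its pattern already lies in $D_n$. That is exactly what the diagram of the model does not reveal.
\item Auxiliary relations or coherent tuples $(a_0,\dots,a_n)$ cannot supply it. Whatever you read off a tuple is described by a formula, and every consistent formula is realized in every model of the complete theory $T$. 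Atomicity forces atoms below formulas describing strings outside $D_n$ as well: splitting a type makes that type non-principal, but it does not make the formula unrealized.
\item If every readable string were guaranteed to lie in the relevant $D_n$, the consistent reading-formulas would form a $T$-decidable supply of such strings. Searching through them would then give a $T$-computable solution, which in general does not exist.
\end{itemize}
So ``an element realizing an atom determines an initial segment in $D_n$'' is true but cannot be used computably.

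The missing idea is domination, and the reduction has to proceed by trial and error.
\begin{itemize}
\item Set-up: take each $D_n$ upward closed, with decreasing computable approximations $D_{n,s}$. Let $h_s(\sigma,n)$ be the least $\tau\succeq\sigma$ in $D_{n,s}$. It only undershoots its limit $h(\sigma,n)$, so the codes of all strings reachable by step $k$ are bounded by a $\Delta^0_2$ function $B(k)$.
\item Choice of $f$: let $f(k)$ be a stage by which, for every $\rho$ of code at most $B(k)$ and every $n\le k$, both $h_s(\rho,n)$ and the membership of $\rho$ in $D_{n,s}$ have settled. Then $f$ is $\Delta^0_2$ in the instance.
\item Construction: given $g$ not dominated by $f$, let $\rho_{k+1}=h_{g(k)}(\rho_k,n)$ for the least $n\le k$ with $\rho_k\notin D_{n,g(k)}$, and $\rho_{k+1}=\rho_k$ if there is no such $n$.
\item Verification: at each of the infinitely many $k$ with $g(k)\ge f(k)$ this step is correct. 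By induction on $n$, every $D_n$ is therefore eventually met for good.
\item Composition: Conidis's theorem (\Cref{thm:amt-escape}, relativized to the instance) gives an instance-computable atomic theory all of whose atomic models compute such a $g$. Combining the two steps yields $\PIOOG\leq_c\AMT$.
\end{itemize}
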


Clearly, $\PIOOG$ does not have property P${}_3$ for genericity, so neither does $\AMT$ by \Cref{thm:mainTheorem} and \Cref{thm:amt-pioog}. This has to be put in contrast with measure, for which we proved that $\AMT$ has property P${}_1$. Because of this difference, we will need to consider problems computably above $\AMT$ to classify which admit property P${}_1$ for genericity. Thankfully, the next section gives a general framework for deciding such a property.

\subsection{Restricted $\Pi^1_2$-problems}\label[section]{sec:rpi12-framework}

The following class of restricted $\Pi^1_2$ sentences was introduced by Hirschfeldt, Shore and Slaman~\cite{hirschfeldtAtomicModelTheorem2009} in the study of $\Pi^1_1$-conservation theorems for~$\AMT$.

\begin{defn}
A \emph{restricted $\Pi^1_2$-problem} (r$\Pi^1_2$-problem) is a problem of the form
$$
\forall X [\Phi(X) \to \exists Y \Psi(X, Y)]
$$
where $\Phi$ is arithmetic and $\Psi$ is $\Sigma^0_3$.
\end{defn}

Note that \Cref{thm:rp12-criterion-genericity} below will remain true even if we allow $Y$ to be quantified over $\omega^\omega$.
Many problems considered in reverse mathematics are restricted $\Pi^1_2$. We shall actually see that there is a simple criterion to decide whether an r$\Pi^1_2$-problem has one of those properties.

Jockusch and Soare~\cite[Corollary 5.2]{jockuschPi0_1Classes1972} proved the following theorem:

\begin{thm}[Jockusch and Soare~\cite{jockuschPi0_1Classes1972}]\label[theorem]{thm:meager-sigma3}
Let $\Cc \subseteq \omega^\omega$ be a $\Sigma^0_3$ class with no computable member.
Then the class of all sets computing any member of~$\Cc$ is meager.
\end{thm}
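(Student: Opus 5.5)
The plan is to use the Baire category theorem applied to Turing functionals. Let $\mathcal{D} = \{ Z : \exists Y \leT Z,\ Y \in \Cc \}$. This can be written as a countable union over indices $e$:
$$\mathcal{D} = \bigcup_{e} \mathcal{D}_e, \qquad \mathcal{D}_e = \{ Z : \Phi_e^Z \text{ is total and } \Phi_e^Z \in \Cc\}.$$
Meager classes form a $\sigma$-ideal, so it suffices to show that each $\mathcal{D}_e$ is meager. Fix $e$. Since $\Cc$ is $\Sigma^0_3$, we can write $\Cc = \bigcup_i \mathcal{P}_i$, where each $\mathcal{P}_i$ is $\Pi^0_2$, uniformly in $i$. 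The class $\{ Z : \Phi_e^Z \text{ total and } \Phi_e^Z \in \mathcal{P}_i\}$ is then $\Pi^0_2$ in $Z$, since totality is $\Pi^0_2$ and the pullback of a $\Pi^0_2$ condition along a total functional is again $\Pi^0_2$. So it suffices to show that each of these $\Pi^0_2$ classes $\mathcal{E}_{e,i}$ is meager.

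A $\Pi^0_2$ class is a $G_\delta$ set, so it is meager if and only if it is nowhere dense, that is, if and only if it is not dense in any basic open set $[\sigma]$. Suppose, towards a contradiction, that $\mathcal{E}_{e,i}$ is dense in some $[\sigma]$. Then $\mathcal{E}_{e,i} \cap [\sigma]$ is a dense $\Pi^0_2$ subset of $[\sigma]$, and so it contains every sufficiently generic extension of $\sigma$. In fact every $2$-generic $G \succ \sigma$ lies in it, because $G$ meets or avoids each of the $\Sigma^0_1$ sets of strings that make up the $\Pi^0_2$ definition, and density rules out avoidance.

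The key step is to extract a computable member of $\Cc$ from this. Write $\mathcal{E}_{e,i} = \bigcap_n U_n$ with the $U_n$ uniformly $\Sigma^0_1$ open sets, each dense in $[\sigma]$. I construct a sequence $\sigma = \tau_0 \preceq \tau_1 \preceq \cdots$ computably. At stage $n$, search for an extension $\tau_{n+1}$ of $\tau_n$ such that $[\tau_{n+1}] \subseteq U_n$, and such that $\Phi_e^{\tau_{n+1}}(n)$ converges. Let $G = \bigcup_n \tau_n$. Then $G \in \mathcal{E}_{e,i}$, and $\Phi_e^G$ is computable. This is a member of $\Cc$, which contradicts the hypothesis.

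The main obstacle is making this search effective and guaranteeing that it succeeds. Two points need care. First, enumerating $U_n$ gives us strings whose cones lie inside $U_n$, so the search for $\tau_{n+1}$ only needs a string of $U_n$'s enumeration that extends $\tau_n$. Density of $U_n$ in $[\sigma]$ guarantees one exists. Second, we must make sure $\Phi_e^{\tau}(n)\downarrow$ for some further extension $\tau$. Every element of $\mathcal{E}_{e,i}$ makes $\Phi_e$ total, and $\mathcal{E}_{e,i}$ is dense in $[\tau_{n+1}]$. So some member of $[\tau_{n+1}]$ makes $\Phi_e(n)$ converge, and by the use principle a finite extension already does. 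Both searches are therefore over $\Sigma^0_1$ conditions that we know have witnesses, so the whole construction is computable. This yields the contradiction, hence each $\mathcal{E}_{e,i}$ is nowhere dense, and $\mathcal{D}$ is meager.
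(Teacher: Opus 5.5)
Your argument is correct. The paper gives no proof of this statement; it imports it from Jockusch and Soare [Corollary 5.2]. What you give is the standard self-contained proof:
\begin{itemize}
\item pull $\Cc$ back along each $\Phi_e$ to get countably many $\Pi^0_2$ classes $\mathcal{E}_{e,i}$;
\item show that a $\Pi^0_2$ class dense in some cone $[\sigma]$ has a computable member;
\item conclude that each $\mathcal{E}_{e,i}$ is nowhere dense, since otherwise $\Cc$ would have a computable member.
\end{itemize}

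Some small points:
\begin{itemize}
\item The requirement $\Phi_e^{\tau_{n+1}}(n)\downarrow$ is not needed for the contradiction. The sequence $G$ you build is itself computable, so $\Phi_e^G$ is computable. It is total and lies in $\mathcal{P}_i\subseteq\Cc$ simply because $G\in\mathcal{E}_{e,i}$.
\item What you do need is $|\tau_n|\to\infty$, so that $G\in 2^\omega$. Either demand proper extensions at each stage, or rely on the convention that $\Phi_e^\tau(n)\downarrow$ implies $n<|\tau|$. Under that convention your convergence clause is what does this job.
\item The remark about $2$-generics is never used. Weak $1$-genericity would already suffice there.
\item Only the trivial direction ``not dense in any cone $\Rightarrow$ nowhere dense $\Rightarrow$ meager'' is needed, so Baire category is not actually invoked.
\item The argument relativizes verbatim. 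That relativized form is what the paper uses in \Cref{thm:rp12-criterion-genericity}(2) and (3).
\end{itemize}
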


Note that for any computable instance~$X$ of an r$\Pi^1_2$-problem~$\Psf$, the class $\Psf(X)$ of its solutions is $\Sigma^0_3$ in the Cantor space. We therefore deduce our general criterion:

\begin{thm}\label[theorem]{thm:rp12-criterion-genericity}
Let $\Psf$ be a r$\Pi^1_2$-problem.
\begin{itemize}
    \item[(1)] $\Psf$ has property \textup{P}${}_1$ for genericity if and only if there is a computable $\Psf$-instance with no computable $\Psf$-solution.
    \item[(2)] $\Psf$ has property \textup{P}${}_2$ for genericity if and only if the class of sets~$A$ computing a $\Psf$-instance with no $A$-computable $\Psf$-solution, is co-meager.
    \item[(3)] $\Psf$ has property \textup{P}${}_3$ for genericity if and only if there is a $\Psf$-instance~$X$ with no $X$-computable $\Psf$-solution.
\end{itemize}
\end{thm}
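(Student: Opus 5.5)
Each of the three items reduces to one dichotomy, obtained by relativizing \Cref{thm:meager-sigma3}. For an r$\Pi^1_2$-problem $\Psf$ and a $\Psf$-instance $X$, the solution class $\Psf(X)=\{Y:\Psi(X,Y)\}$ is $\Sigma^{0,X}_3$. The key lemma is then the following: the class $\{Z : X\oplus Z \text{ computes a member of } \Psf(X)\}$ is meager if and only if $\Psf(X)$ has no $X$-computable member.

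For the direction ``no $X$-computable solution implies meager'', I would apply the relativization to $X$ of Jockusch and Soare's theorem. It says that the class of $W$ computing, relative to $X$, a member of $\Psf(X)$ is meager. The class in question is the preimage of that meager class under $Z\mapsto X\oplus Z$, viewed as a map from $2^\omega$ onto a copy of $2^\omega$ (only the odd bits vary). This map is a homeomorphism, so it preserves meagerness.

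For the converse, if $Y\leT X$ solves $X$, then every $Z$ works, and the class is all of $2^\omega$, which is not meager. The one point to check is that ``small'' here is just ``meager''. The properties P$_i$ are phrased in terms of smallness outright, not effective $n$-smallness, so this holds.

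The three items then follow by instantiating the lemma.
\begin{itemize}
\item[(1)] P$_1$ asks for a computable instance $C$ whose solving class $\{Z: Z \text{ computes a solution to } C\}$ is meager. Since $C$ is computable, this class equals the $X=C$ class of the lemma. So the condition is equivalent to $C$ having no computable solution.
\item[(3)] P$_3$ asks for any instance $X$ with meager class $\{Z: X\oplus Z \text{ computes a solution}\}$. By the lemma, this is the same as $X$ having no $X$-computable solution.
\item[(2)] P$_2$ says that for co-meager many $Z$, there is a $C_Z\leT Z$ that is $Z$-rarely solved. Here the relevant class is $\{X : X\oplus Z \text{ computes a solution to } C_Z\}$. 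Applying the lemma relative to $Z$ (note $\Psf(C_Z)$ is $\Sigma^{0,Z}_3$ since $C_Z\leT Z$), this class is meager if and only if $C_Z$ has no $Z$-computable solution. So P$_2$ says exactly that the set of $A$ computing an instance with no $A$-computable solution is co-meager.
\end{itemize}

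The main obstacle is making sure the relativization is legitimate. Jockusch and Soare's result is stated for $\Sigma^0_3$ classes in $\omega^\omega$ with no computable member, and we must pass to $\Sigma^{0,X}_3$ classes and to computations from $X\oplus Z$ rather than from $Z$ alone. The proof relativizes uniformly, so this is routine. The remark that solutions $Y$ may lie in $\omega^\omega$ is also harmless, since the theorem is already stated in Baire space.
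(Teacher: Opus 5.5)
Your proposal is correct and is essentially the paper's proof. The forward directions are immediate because an instance with a solution computable from the relevant oracle is solved relative to every $Z$, and $2^\omega$ is not meager. The backward directions are \Cref{thm:meager-sigma3} relativized to the instance $X$, or to $A$ in item (2).

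One sentence should be deleted: the claim that $Z\mapsto X\oplus Z$ is a homeomorphism and therefore preserves meagerness. That map is a homeomorphism onto a closed nowhere dense subset of $2^\omega$, so it does not pull meager classes back to meager classes. Its own image is meager, yet pulls back to all of $2^\omega$. The step is also unnecessary, since the relativized theorem already says directly that $\{Z : X\oplus Z \text{ computes a member of } \Psf(X)\}$ is meager.
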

\begin{proof}
The forward direction for each of~(1), (2) and (3) is immediate. We prove the backward directions.
\begin{itemize}
    \item[(1)] Suppose that there is a computable $\Psf$-instance~$X$ with no computable $\Psf$-solution. The class~$\Psf(X)$ of its solutions being $\Sigma^0_3$ with no computable member, by \Cref{thm:meager-sigma3}, the class of all sets computing any member of~$\Psf(X)$ is meager, so $X$ witnesses that $\Psf$ has property P${}_1$.
    \item[(2)] Suppose that the class of sets~$A$ computing a $\Psf$-instance with no $A$-computable $\Psf$-solutions is co-meager. For any such set~$A$, letting $X$ be the $A$-computable $\Psf$-instance with no $A$-computable $\Psf$-solution, the class $\Psf(X)$ of its $\Psf$-solutions is $\Sigma^0_3(A)$ with no $A$-computable member, so by \Cref{thm:meager-sigma3} relativized to~$A$, the class of all sets $A$-computing any member of~$\Psf(X)$ is meager. It follows that $\Psf$ has property P${}_2$.
    \item[(3)] Suppose that there is a $\Psf$-instance~$X$ with no $X$-computable $\Psf$-solution. The class~$\Psf(X)$ of its solutions being $\Sigma^0_3(X)$ with no $X$-computable member, by \Cref{thm:meager-sigma3} relativized to~$X$, the class of all sets $X$-computing any member of~$\Psf(X)$ is meager, so $X$ witnesses that $\Psf$ has property P${}_3$.
\end{itemize}
\end{proof}

We now list some r$\Pi^1_2$-problems which are minimal for property P${}_1$ for genericity.
\smallskip

In their study of partial and linear orders, Hirschfeldt and Shore~\cite{hirschfeldtCombinatorialPrinciplesWeaker2007} decomposed the Ascending Descending Sequence principle ($\ADS$) into a cohesive ($\CADS$) and a stable ($\SADS$) version. The $\CADS$ principle was already introduced and studied in \Cref{sec:ran-coh} for randomness.

\begin{stmt}
$\SADS$ is the statement \qt{For every infinite linear order of type $\omega+\omega^*$, there is an infinite ascending or descending sequence.}
\end{stmt}

Tennenbaum (see Rosenstein~\cite{rosensteinLinearOrderings1983}) and Denisov (see Goncharov and Nurtazin~\cite{goncharov1973constructive}) constructed a computable linear order of order type $\omega+\omega^*$ with no infinite computable ascending or descending sequence. Moreover, note that $\SADS$ is an r$\Pi^1_2$-problem, so by 
\Cref{thm:rp12-criterion-genericity}(1), $\SADS$ has property P${}_1$ for genericity.
\smallskip

A function $f : \omega \to \omega$ is \emph{diagonally non-$X$-computable} ($X$-DNC) if for every~$e \in \omega$, $f(e) \neq \Phi^X_e(e)$.
The following statement is the reverse-mathematical counterpart of the notion of diagonally non-computable function.

\begin{stmt}
$\DNR$ is the statement \qt{For every set~$X$, there is an $X$-DNC function.}
\end{stmt}

As its name suggests, an immediate diagonal argument gives that there is no computable $\emptyset$-DNC function. Moreover, for every~$X$, the class of $X$-DNC functions is either $\Pi^0_1$ in the Baire space, or $\Pi^0_2$ in the Cantor space, so $\DNR$ is an r$\Pi^1_2$-problem. By \Cref{thm:rp12-criterion-genericity}(1), $\DNR$ has property P${}_1$ for genericity.
\smallskip

Last, recall the Ramsey-type graph coloring principle $\RCOLOR_2$ defined in \Cref{sec:ran-rcolor2}.
Given a computable, infinite locally 2-colorable graph $G = (V, E)$, the set of infinite homogeneous sets for~$G$
is a $\Pi^0_1$ class in the Baire space or a $\Pi^0_2$ class in the Cantor space, so $\RCOLOR_2$ is an r$\Pi^1_2$-problem. Moreover, Bienvenu, Patey and Shafer~\cite{bienvenuLogicalStrengthsPartial2017} constructed a computable instance of~$\RCOLOR_2$ with no computable solution. Again, by \Cref{thm:rp12-criterion-genericity}(1), $\RCOLOR_2$ has property P${}_1$ for genericity.

\subsection{Cohesiveness principles}

The analysis of cohesiveness principles for genericity is almost the same as the one for randomness from \Cref{sec:ran-coh}, except that \Cref{thm:pa-measure} and \Cref{thm:gl-measure} have to be replaced by their genericity counterparts:

The first theorem is the relativization of a theorem of Jockusch and Soare~\cite[Theorem 5.1]{jockuschPi0_1Classes1972}:

\begin{thm}[Jockusch and Soare~\cite{jockuschPi0_1Classes1972}]\label[theorem]{thm:pa-genericity}
Fix a set~$Z$. The following class is meager:
$$
\{ X : X \oplus Z \mbox{ is of PA degree over } Z \}
$$
\end{thm}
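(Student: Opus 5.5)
Fix $Z$ and write $\Cc_Z = \{ X : X \oplus Z \mbox{ is of PA degree over } Z \}$. The plan is to relativize the proof of the unrelativized theorem (Jockusch--Soare, Theorem 5.1): every sufficiently generic $X$ relative to $Z$ fails to make $X \oplus Z$ PA over $Z$. It suffices to show that every $X$ which is $2$-generic relative to $Z$ (in fact weakly $2$-generic relative to $Z$) lies outside $\Cc_Z$. The class of such $X$ is co-meager, because its complement is a countable union of nowhere dense sets, one per dense $\Sigma^{0,Z}_2$ set of strings.

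We use a standard characterization: $W$ is of PA degree over $Z$ iff $W$ computes a $\{0,1\}$-valued $Z$-DNC function. Fix a Turing functional $\Phi$. The goal is to show that the set of $X$ for which $\Phi^{X \oplus Z}$ is a total $\{0,1\}$-valued $Z$-DNC function is meager; taking the union over all $\Phi$ then finishes the proof.

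The key step is a forcing argument. Using the recursion theorem relative to $Z$, we build an index $e$ whose $Z$-computation $\Phi^Z_e(e)$ depends on the strings. Consider the $\Sigma^{0,Z}_1$ set of strings
\[ D_e=\{\sigma : \exists b\,[\Phi^{\sigma\oplus Z}(e)\downarrow=b \wedge \Phi^Z_e(e)=b]\}. \]
Take any string $\tau$ that has no extension $\sigma$ with $\Phi^{\sigma \oplus Z}(e)\downarrow$. Then every $X$ extending $\tau$ makes $\Phi^{X\oplus Z}$ partial. It remains to handle the case where extensions converging at $e$ exist below every $\tau$.

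For this case we let $e$ act as follows: given $\tau$, it searches, uniformly in $\tau$ and $Z$, for the first extension $\sigma \succeq \tau$ with $\Phi^{\sigma\oplus Z}(e)\downarrow = b$ and sets $\Phi^Z_e(e)=b$. Because this must work for every $\tau$ simultaneously, we need one index per $\tau$. So we apply the recursion theorem to obtain a $Z$-computable function $\tau \mapsto e_\tau$ such that $\Phi^Z_{e_\tau}(e_\tau)$ equals the first value $\Phi^{\sigma \oplus Z}(e_\tau)$ found on an extension $\sigma\succeq\tau$. Then the set of $\sigma$ that extend some $\tau$ and make $\Phi^{\sigma\oplus Z}(e_\tau)$ equal $\Phi^Z_{e_\tau}(e_\tau)$ is $Z$-c.e. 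Together with the strings all of whose extensions diverge at $e_\tau$ (a $\Pi^{0,Z}_1$ condition), this set is dense. Every $X$ meeting this dense set has $\Phi^{X\oplus Z}$ either partial or failing to be $Z$-DNC at some $e_\tau$.

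The main obstacle is this density and uniformity bookkeeping: checking that the combined set is a dense $\Sigma^{0,Z}_2$ set of strings, so that weak $2$-genericity relative to $Z$ suffices. Alternatively, one can phrase the argument directly as a Baire category argument. For each $\Phi$, the set of $X$ with $\Phi^{X\oplus Z}$ total $Z$-DNC is nowhere dense, since every $\tau$ has an extension forcing either divergence or a diagonal agreement. Verifying this nowhere density for each $\Phi$ and taking the union over $\Phi$ shows that $\Cc_Z$ is meager.
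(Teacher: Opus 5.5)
Your proof is correct. The paper gives no proof of its own here; it only cites Jockusch and Soare's Theorem 5.1 relativized to $Z$. So the useful comparison is with the standard argument behind that citation.

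Your route has two steps. First, you reduce to $\{0,1\}$-valued $Z$-DNC functions. Second, for each functional $\Phi$, you use the recursion theorem with parameters to produce indices $e_\tau$ whose diagonal value copies the first convergent value of $\Phi^{\sigma\oplus Z}(e_\tau)$ found above $\tau$. This gives every $\tau$ an extension forcing either divergence at $e_\tau$ or agreement with $\Phi^Z_{e_\tau}(e_\tau)$. That yields nowhere density, and shows weak $2$-genericity relative to $Z$ suffices.

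The recursion theorem is not actually needed. Take the single $Z$-c.e.\ set $S=\{\sigma : \exists e\,[\Phi^{\sigma\oplus Z}(e)\downarrow=\Phi^Z_e(e)]\}$. Either $\tau$ has an extension in $S$, or no extension of $\tau$ lies in $S$. In the second case, searching above $\tau$ for convergent computations $\Phi^{\sigma\oplus Z}(e)$ gives a $Z$-computable function that is $Z$-DNC wherever defined. It is total as soon as some $X$ extending $\tau$ makes $\Phi^{X\oplus Z}$ total, which is impossible, so the cylinder above $\tau$ misses the class.

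This standard argument shows that $1$-genericity relative to $Z$ already suffices. It also avoids the per-$\tau$ indices and the $\Sigma^{0,Z}_2$ bookkeeping that you flagged as the main obstacle. Your version replaces that with an explicit diagonalization, which is equally valid.

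Neither argument uses that the function is $\{0,1\}$-valued. So both actually show that for comeager many $X$, $X\oplus Z$ computes no $Z$-DNC function at all. This matches the paper's introductory remark that no $1$-generic computes a DNC function.
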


The following theorem is considered folklore (see Jockusch~\cite[Lemma 2.6]{jockuschDegreesGenericSets1980}), as it is at the heart of forcing:

\begin{thm}\label[theorem]{thm:gl-genericity}
For every~$n \geq 1$, the following class is co-meager:
$$
\{ X : X^{(n)} \equiv_T X \oplus \emptyset^{(n)} \}
$$
\end{thm}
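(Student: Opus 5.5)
Fix $n \geq 1$. The inclusion $X \oplus \emptyset^{(n)} \leq_T X^{(n)}$ holds for every $X$, so I only need to show that the class of $X$ with $X^{(n)} \leq_T X \oplus \emptyset^{(n)}$ is co-meager. My plan is to show that every weakly $(n+1)$-generic set, and in particular every $n$-generic set in the sense of meeting or avoiding every $\Sigma^0_n$ set of strings, lies in this class. The class of $n$-generic sets is co-meager: it is the intersection of countably many classes of the form $\{X : X \text{ meets or avoids } W_e\}$, and each of these is open dense. So the theorem reduces to the pointwise statement that every $n$-generic $X$ satisfies $X^{(n)} \leq_T X \oplus \emptyset^{(n)}$.

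I will prove this by induction on $n$, using the standard forcing relation for $\Sigma^0_k$ and $\Pi^0_k$ formulas with Cohen conditions $\sigma \in 2^{<\omega}$. The main lemma is twofold. First, for $k \leq n$, forcing $\Sigma^0_k$ sentences (with the set parameter $X$) is $\Sigma^0_k$ as a relation on strings, and forcing $\Pi^0_k$ sentences is $\Pi^0_k$. Second, if $X$ is $n$-generic, then truth equals forcing for these sentences: $X \models \varphi$ if and only if some $\sigma \prec X$ forces $\varphi$. The base case $k=1$ is immediate, since $\sigma$ forces $\exists s\, \Phi^X_e(e)[s]\downarrow$ exactly when $\Phi^\sigma_e(e)\downarrow$, and $1$-genericity handles the negations.

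With the lemma in hand, here is how to decide whether $e \in X^{(n)}$, uniformly using $X \oplus \emptyset^{(n)}$. The statement $e \in X^{(n)}$ is a $\Sigma^0_n$ sentence about $X$. Let $S_e$ be the set of strings forcing it; this set is $\Sigma^0_n$, uniformly in $e$. Let $T_e$ be the set of strings $\sigma$ that have no extension in $S_e$; this set is $\Pi^0_n$, and it consists exactly of the strings forcing the negation. Because $X$ is $n$-generic, $X$ either meets $S_e$ or meets $T_e$. So I search along $X \upharpoonright m$ for increasing $m$, asking $\emptyset^{(n)}$ whether $X \upharpoonright m \in S_e$ or $X \upharpoonright m \in T_e$. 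Both questions are decidable in $\emptyset^{(n)}$, since $S_e$ and $T_e$ are $\Sigma^0_n$ and $\Pi^0_n$ respectively. By genericity the search halts, and truth-equals-forcing gives the correct answer.

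The main obstacle is proving truth-equals-forcing at level $n$ with the definability bound intact. The subtle case is the $\Pi^0_k$ step, where forcing is defined as ``no extension forces the negation''. To keep the complexity of the forcing relation down, I will quantify over strings rather than over sets. I will then apply $n$-genericity at each level $k \leq n$ to the $\Sigma^0_k$ set of strings forcing the relevant $\Sigma^0_k$ formula, so that every such sentence is decided along $X$.
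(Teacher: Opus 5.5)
Your proposal is correct and is essentially the standard forcing argument behind the folklore result the paper cites without proof (Jockusch's Lemma 2.6: every $n$-generic $X$ satisfies $X^{(n)} \equiv_T X \oplus \emptyset^{(n)}$). One cosmetic slip: weak $(n+1)$-genericity implies $n$-genericity, not the other way round, so your ``in particular'' is reversed; proving the claim for all $n$-generic sets is the stronger statement and already covers the weakly $(n+1)$-generic ones.
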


We are now ready to classify $\COH$ and $\CADS$:

\begin{thm}
$\COH$ and $\CADS$ have property P${}_1$ for genericity.
\end{thm}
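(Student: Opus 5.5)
We mirror the measure-theoretic argument for $\COH$ from \Cref{sec:ran-coh}, replacing the measure results by their category counterparts. Let $\vec{R} = R_0, R_1, \dots$ be the uniformly computable sequence of all primitive recursive sets. This is a computable $\COH$-instance, and we claim it witnesses property P${}_1$ for genericity. That is, we claim the class
$$
\Cc = \{ X : X \text{ computes an infinite cohesive set for } \vec{R} \}
$$
is meager.

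The argument has three steps.

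\emph{Step 1.} Suppose $X \in \Cc$, and let $C \leT X$ be an infinite cohesive set for~$\vec{R}$. By the first item of Jockusch and Stephan's \Cref{thm:coh-degrees}, $C'$ is of PA degree over~$\emptyset'$. Since $C \leT X$, we have $C' \leT X'$. Being of PA degree over~$\emptyset'$ is upward closed under Turing reducibility, so $X'$ is also of PA degree over~$\emptyset'$. Hence
$$
\Cc \subseteq \{ X : X' \text{ is of PA degree over } \emptyset' \}.
$$

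\emph{Step 2.} By \Cref{thm:gl-genericity} with $n=1$, the class $\Ec = \{X : X' \equiv_T X \oplus \emptyset'\}$ is co-meager. Intersecting with~$\Ec$, we obtain
$$
\Cc \cap \Ec \subseteq \{ X : X \oplus \emptyset' \text{ is of PA degree over } \emptyset' \}.
$$
By \Cref{thm:pa-genericity} applied with $Z = \emptyset'$, the right-hand side is meager.

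\emph{Step 3.} Now $\Cc \subseteq (\Cc \cap \Ec) \cup (2^\omega \setminus \Ec)$ is a union of two meager classes, so $\Cc$ is meager by properties~(\ref{stmt:nonempty}) and~(\ref{stmt:union}) of smallness. Therefore $\COH$ has property P${}_1$ for genericity.

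For $\CADS$, we use \Cref{thm:coh-cads}, which gives $\COH \leq_c \CADS$. If $\CADS$ lacked property P${}_1$ for genericity, then \Cref{prop:p1-not-p1-separation} (which holds for any notion of smallness) would give $\COH \not\leq_c \CADS$, a contradiction. Concretely, the $\CADS$-instance computed from $\vec{R}$ is computable, and any set computing a $\CADS$-solution to it computes a cohesive set for~$\vec{R}$. So the class of sets computing such a $\CADS$-solution is contained in~$\Cc$, and is therefore meager.

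The only point needing care is Step 1. Jockusch and Stephan's theorem concerns $C'$, whereas \Cref{thm:pa-genericity} concerns $X \oplus \emptyset'$. The reduction from one to the other goes through jump monotonicity together with the co-meager class~$\Ec$. Everything else is a direct citation of earlier results.
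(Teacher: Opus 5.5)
Your proof is correct and follows the paper's argument essentially step for step. Both use the same ingredients in the same order: Jockusch--Stephan for the primitive recursive sequence, which together with jump monotonicity places every oracle computing a cohesive set among those whose jump is PA over $\emptyset'$; the co-meager class $\Ec$ from \Cref{thm:gl-genericity} to pass from $X'$ to $X \oplus \emptyset'$; Jockusch--Soare (\Cref{thm:pa-genericity}) with $Z = \emptyset'$; and finally \Cref{thm:coh-cads} with \Cref{prop:p1-not-p1-separation} to transfer the result to $\CADS$.
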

\begin{proof}
Let $\vec{R} = R_0, R_1, \dots$ be the sequence of all primitive recursive sets.
By Jockusch and Stephan (\Cref{thm:coh-degrees}), the jump of any infinite cohesive set for~$\vec{R}$ is of PA degree over~$\emptyset'$.
Let $\Cc = \{ X : X' \mbox{ is of PA degree over } \emptyset' \}$,
$\Dc = \{ X : X \oplus \emptyset' \mbox{ is of PA degree over } \emptyset' \}$ and $\Ec = \{ X : X' \equiv_T X \oplus \emptyset' \}$.
Note that $\Cc \subseteq (\Dc \cap \Ec) \cup \Ec^c$.

By \Cref{thm:gl-genericity} and Jockusch and Soare (\Cref{thm:pa-genericity}), 
the classes $\Dc$ and $\Ec^c$ are meager, so $\Cc$ is meager.
It follows that the class of sets which compute an infinite cohesive set for~$\vec{R}$ is meager, so $\COH$ has property P${}_1$ for genericity.

Last, by Hirschfeldt and Shore~\cite{hirschfeldtCombinatorialPrinciplesWeaker2007} (\Cref{thm:coh-cads}), $\COH \equiv_c \CADS$, so by \Cref{prop:p1-not-p1-separation}, $\CADS$ has property P${}_1$ for genericity.
\end{proof}

\subsection{Positive measure domination}

As for cohesiveness, the analysis of the positive measure domination principle ($\POS$) for genericity is very similar to that for randomness from \Cref{sec:ran-pos}. Indeed, the only measure-specific component of the proof was the fact that being of high degree is a measure-0 property. This also holds for genericity by \Cref{thm:gl-genericity}.

\begin{thm}
$\POS$ has property P${}_1$ for genericity.
\end{thm}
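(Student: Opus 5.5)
We follow the proof given for measure in \Cref{sec:ran-pos}, replacing the single measure-specific ingredient by its category counterpart. By \Cref{thm:p1-degrees-bounding}, it suffices to show that the class of $\POS$-bounding sets is meager. Two hypotheses of that theorem need checking. First, category gives a tail-trivial notion of smallness: this is the topological 0-1 law mentioned after the definition of tail-triviality. Second, $\POS$ is a $\Pi^1_2$-problem, once $G_\delta$ and closed classes are coded in the usual way by sets of strings.

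Next, we identify $\POS$-bounding sets with positive measure dominating sets. Computable instances of $\POS$ are exactly (codes for) $\Pi^0_2$ classes of positive measure. Solutions are closed subclasses of positive measure, coded as $\Pi^0_1(A)$ classes whenever the code is computable in $A$. Hence $A$ is $\POS$-bounding exactly when it satisfies \Cref{thm:uaed-pmd}(2). By \Cref{thm:uaed-pmd}, these are exactly the uniformly almost everywhere dominating sets. By Martin's domination theorem~\cite{martinClassesRecursivelyEnumerable1966}, every such set is high: it computes a function dominating all computable functions, so $A' \geT \emptyset''$.

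It remains to show that the class of high sets is meager. By \Cref{thm:gl-genericity} with $n=1$, the class $\Ec = \{X : X' \equiv_T X \oplus \emptyset'\}$ is co-meager. Suppose $X \in \Ec$ is high. Then $X \oplus \emptyset' \geT \emptyset''$, so $X \oplus \emptyset'$ is of PA degree over $\emptyset'$. By \Cref{thm:pa-genericity} with $Z = \emptyset'$, the class of such $X$ is meager. Therefore the high sets are contained in the union of a meager class and $\Ec^c$, hence are meager. This is the same decomposition used for $\COH$ above.

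The main obstacle is this last step. \Cref{thm:gl-genericity} alone only reduces highness to the statement that $X \oplus \emptyset'$ computes $\emptyset''$, and a further meagerness fact is needed. Routing it through \Cref{thm:pa-genericity} handles this. Alternatively, one can apply \Cref{thm:meager-sigma3} relativized to $\emptyset'$ to the singleton $\{\emptyset''\}$: this is a $\Pi^0_1(\emptyset'')$ and hence $\Sigma^0_3(\emptyset')$ class with no $\emptyset'$-computable member, so the sets $X$ with $X \oplus \emptyset' \geT \emptyset''$ form a meager class.
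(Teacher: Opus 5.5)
Your proposal is correct and follows the paper's proof: you reduce via \Cref{thm:p1-degrees-bounding} to showing that the positive measure dominating sets form a meager class, note that these sets are uniformly almost everywhere dominating and hence high by Martin's theorem, and conclude because the high sets are meager. Your explicit justification of that last step, combining \Cref{thm:gl-genericity} with \Cref{thm:pa-genericity} for $Z=\emptyset'$ exactly as in the $\COH$ argument, supplies a detail the paper leaves implicit when it attributes meagerness of the high sets to \Cref{thm:gl-genericity} alone.
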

\begin{proof}
By \Cref{thm:p1-degrees-bounding}, it suffices to show that the class of sets satisfying \Cref{thm:uaed-pmd}(2), or equivalently satisfying \Cref{thm:uaed-pmd}(1), is meager. By \Cref{thm:gl-genericity} the class of sets of high degree is meager. By Martin's domination theorem~\cite{martinClassesRecursivelyEnumerable1966}, any set satisfying \Cref{thm:uaed-pmd}(1) is of high degree, so we conclude.
\end{proof}

\section{Summary diagrams}

Based on the reverse-mathematical zoo by Astor and Dzhafarov~\cite{rmzoo}, the authors extracted a zoo of $\Pi^1_2$-problems under computable reducibility~\cite{zoodataset}, and classified the problems based on which admits which property for randomness and genericity. As it happens, for both notions of smallness,  every problem in the zoo either has property~P$_1$ (in which case it also has property P$_2$ and P$_3$), or it does not even have property~P$_3$. On the other hand, by \Cref{prop:pi-linearly-ordered}, one can construct artificial $\Pi^1_2$-problems having property~P$_{i+1}$ but not P$_i$ for $i \in \{1,2\}$. 

\begin{openQuestion}
Are there natural $\Pi^1_2$-problems having property P$_2$ but not property P$_1$, or property P$_3$ but not property P$_2$ for randomness? for genericity?
\end{openQuestion}

The notion of natural $\Pi^1_2$-problem is not formally defined, but one could for instance consider any $\Pi^1_2$-sentence appearing in a mathematical textbook at an undergraduate level. 

We conclude with summary diagrams.
\Cref{fig:summary-ran} and \Cref{fig:summary-gen} represent the lower part of the reverse-mathematical zoo under computable reducibility. Double arrows denote strict reductions. Problems in the hatched zone do not have property P${}_3$ for the considered notion of typicality. Problems outside the hatched zone have property P${}_1$.  Every other problem studied in reverse mathematics is computably above one of the problems in the diagram having property P${}_1$.

\begin{figure}[htbp]
\begin{center}
\begin{tikzpicture}[x=3cm, y=1.2cm, 
	node/.style={minimum size=2em, fill=white, inner sep=2pt},
	impl/.style={draw,very thick,-latex},
	strict/.style={draw, -latex, double distance=2pt},
	nonimpl/.style={draw, very thick, dotted, -latex},
    equiv/.style={draw, very thick, latex-latex}
]

    \path[pattern=north east lines, pattern color=gray!60] 
        (-1.5, 0.4) 
        -- (-0.9, 0.4) 
        .. controls (-0.5, 0.4) and (-0.5, 2.5) .. (-0.2, 2.5) 
        -- (0.35, 2.5) 
        .. controls (0.6, 2.5) and (0.6, 4.6) .. (1.0, 4.6) 
        -- (2.5, 4.6) 
        -- (2.5, -0.5) -- (-1.5, -0.5) -- cycle;

    \draw[thick, dashed] 
        (-1.5, 0.4) 
        -- (-0.9, 0.4) 
        .. controls (-0.5, 0.4) and (-0.5, 2.5) .. (-0.2, 2.5) 
        -- (0.35, 2.5) 
        .. controls (0.6, 2.5) and (0.6, 4.6) .. (1.0, 4.6) 
        -- (2.5, 4.6);
	
	\node (AST) at (0, 0) {$\mathsf{AST} \equiv_c \mathsf{NON}$};
	\node (OPT) at (0, 1) {$\mathsf{OPT} \equiv_c \mathsf{HYP}$};
    \node (RCOLOR2) at (-1, 1) {$\mathsf{RCOLOR}_2$};
    \node (DNR) at (1, 1) {$\mathsf{DNC}$};
    \node (FIP) at (0, 2) {$\mathsf{FIP} \equiv_c 1\mbox{-}\mathsf{GEN}$};
    \node (AMT) at (0, 3) {$\mathsf{AMT} \equiv_c \Pi^0_1\mathsf{G}$};
	\node (COH) at (-1, 3) {$\mathsf{COH} \equiv_c \mathsf{CADS}$};
    \node (POS) at (-1, 2) {$\mathsf{POS}$};
    \node (SADS) at (0, 4) {$\mathsf{SADS}$};
    \node (WWKL) at (2, 2) {$\mathsf{WWKL} \equiv_c 1\mbox{-}\mathsf{RAN}$};
    \node (WWKL2) at (1.5, 4) {$2\mbox{-}\mathsf{WWKL} \equiv_c 2\mbox{-}\mathsf{RAN}$};
    \node (DNR2) at (1.2, 2.5) {$\mathsf{RRT}^2_2 \equiv_c 2\mbox{-}\mathsf{DNC}$};

    \draw[strict] (OPT) -- (AST);
    \draw[strict] (RCOLOR2) -- (AST);
    \draw[strict] (DNR) -- (AST);
    \draw[strict] (POS) -- (OPT);
    \draw[strict] (COH) -- (OPT);
    \draw[strict] (FIP) -- (OPT);
    \draw[strict] (AMT) -- (FIP);
    \draw[strict] (SADS) -- (AMT);
    \draw[strict] (WWKL) -- (DNR);
    \draw[strict] (WWKL2) -- (WWKL);
    \draw[strict] (WWKL2) -- (DNR2);
    \draw[strict] (DNR2) -- (DNR);
    \draw[strict] (DNR2) -- (OPT);
    \draw[strict] (WWKL2) -- (FIP);

\end{tikzpicture}

\caption{\label{fig:summary-ran} Classification based on randomness.}
\end{center}

\end{figure}

\begin{figure}[htbp]
\begin{center}
\begin{tikzpicture}[x=3cm, y=1.2cm, 
	node/.style={minimum size=2em, fill=white, inner sep=2pt},
	impl/.style={draw,very thick,-latex},
	strict/.style={draw, -latex, double distance=2pt},
	nonimpl/.style={draw, very thick, dotted, -latex},
    equiv/.style={draw, very thick, latex-latex}
]

    \path[pattern=north east lines, pattern color=gray!60] 
        (-1.5, 0.4) 
        -- (-0.85, 0.4) 
        .. controls (-0.6, 0.4) and (-0.6, 3.5) .. (-0.35, 3.5) 
        -- (0.35, 3.5) 
        .. controls (0.6, 3.5) and (0.6, 0.4) .. (0.85, 0.4) 
        -- (2.5, 0.4) 
        -- (2.5, -0.5) -- (-1.5, -0.5) -- cycle;

    \draw[thick, dashed] 
        (-1.5, 0.4) 
        -- (-0.85, 0.4) 
        .. controls (-0.6, 0.4) and (-0.6, 3.5) .. (-0.35, 3.5) 
        -- (0.35, 3.5) 
        .. controls (0.6, 3.5) and (0.6, 0.4) .. (0.85, 0.4) 
        -- (2.5, 0.4);
	
	\node (AST) at (0, 0) {$\mathsf{AST} \equiv_c \mathsf{NON}$};
	\node (OPT) at (0, 1) {$\mathsf{OPT} \equiv_c \mathsf{HYP}$};
    \node (RCOLOR2) at (-1, 1) {$\mathsf{RCOLOR}_2$};
    \node (DNR) at (1, 1) {$\mathsf{DNC}$};
    \node (FIP) at (0, 2) {$\mathsf{FIP} \equiv_c 1\mbox{-}\mathsf{GEN}$};
    \node (AMT) at (0, 3) {$\mathsf{AMT} \equiv_c \Pi^0_1\mathsf{G}$};
	\node (COH) at (-1, 3) {$\mathsf{COH} \equiv_c \mathsf{CADS}$};
    \node (POS) at (-1, 2) {$\mathsf{POS}$};
    \node (SADS) at (0, 4) {$\mathsf{SADS}$};
    \node (WWKL) at (2, 2) {$\mathsf{WWKL} \equiv_c 1\mbox{-}\mathsf{RAN}$};
    \node (WWKL2) at (1.5, 4) {$2\mbox{-}\mathsf{WWKL} \equiv_c 2\mbox{-}\mathsf{RAN}$};
    \node (DNR2) at (1.2, 2.5) {$\mathsf{RRT}^2_2 \equiv_c 2\mbox{-}\mathsf{DNC}$};

    \draw[strict] (OPT) -- (AST);
    \draw[strict] (RCOLOR2) -- (AST);
    \draw[strict] (DNR) -- (AST);
    \draw[strict] (POS) -- (OPT);
    \draw[strict] (COH) -- (OPT);
    \draw[strict] (FIP) -- (OPT);
    \draw[strict] (AMT) -- (FIP);
     \draw[strict] (SADS) -- (AMT);
    \draw[strict] (WWKL) -- (DNR);
    \draw[strict] (WWKL2) -- (WWKL);
    \draw[strict] (WWKL2) -- (DNR2);
    \draw[strict] (DNR2) -- (DNR);
    \draw[strict] (DNR2) -- (OPT);
    \draw[strict] (WWKL2) -- (FIP);

\end{tikzpicture}

\caption{\label{fig:summary-gen} Classification based on genericity.}
\end{center}

\end{figure}

\section*{Use of AI}

The authors used the large language model Claude Opus 5, by Anthropic, to translate the reverse-mathematical zoo by Astor and Dzhafarov~\cite{rmzoo} into a zoo under computable reducibility~\cite{zoodataset}. Besides the dataset, the article did not use AI.

\bibliographystyle{plain}
\bibliography{references}

\end{document}